\newcommand{\runninghead}[1]{\gdef\RH{#1}}\newcommand{\RH}{}
\newcommand{\msc}[1]{\gdef\MSC{#1}}\newcommand{\MSC}{}
\renewcommand{\title}[1]{\gdef\TT{#1}}\newcommand{\TT}{}
\renewcommand{\author}[1]{\gdef\AU{#1}}\newcommand{\AU}{}
\newcommand{\address}[1]{\gdef\ADR{#1}}\newcommand{\ADR}{}
\renewcommand{\date}[1]{\gdef\DD{#1}}\newcommand{\DD}{}
\newcommand{\version}[1]{\gdef\VER{#1}}\newcommand{\VER}{}
\renewcommand{\maketitle}{
\par \noindent {\footnotesize \upshape Running head: \RH \hfill \DD \par
\noindent Math.\ Subj.\ Class. (2000): \MSC \hfill\textbf{\VER}}
\vspace{3cm}
\begin{center} {\LARGE \normalfont \TT} \par \medskip \AU \end{center} \vspace{1cm}
}
\newcommand{\finalinfo}{
\bigskip \noindent {\small \upshape \ADR} }
\def\@seccntformat#1{\csname the#1\endcsname. \ }
\theoremstyle{plain}
\newtheorem{introprop}{Proposition}
\newtheorem{introthm}[introprop]{Theorem}
\theoremstyle{plain}
\newtheorem{thm}{Theorem}[section]
\newtheorem{prop}[thm]{Proposition}
\newtheorem{cor}[thm]{Corollary}
\newtheorem{lem}[thm]{Lemma}
\theoremstyle{definition}
\newtheorem{rem}[thm]{Remark}
\newtheorem{notation}[thm]{Notation}
\newtheorem{notationassumptions}[thm]{Notation and Assumptions}
\newtheorem{se}[thm]{}
\newcommand{\bad}{{\operatorname{bad}}}
\newcommand{\tors}{{\operatorname{tors}}}
\newcommand{\calA}{{\mathcal A}}
\newcommand{\calB}{{\mathcal B}}
\newcommand{\calO}{{\mathcal O}}
\newcommand{\calP}{{\mathcal P}}
\newcommand{\calS}{{\mathcal S}}
\newcommand{\calT}{{\mathcal T}}
\newcommand{\calU}{{\mathcal U}}
\newcommand{\calV}{{\mathcal V}}
\newcommand{\calW}{{\mathcal W}}
\newcommand{\F}{{\mathbb F}}
\newcommand{\Q}{{\mathbb Q}}
\newcommand{\R}{{\mathbb R}}
\newcommand{\Z}{{\mathbb Z}}
\newcommand{\pp}{{\mathfrak p}}
\newcommand{\qq}{{\mathfrak q}}
\newcommand{\nn}{{\mathfrak n}}
\newcommand{\dd}{{\mathfrak d}}
\newcommand{\be}{\begin{enumerate}}
\newcommand{\ee}{\end{enumerate}}
\newcommand{\norm}{{\mathbf N}}
\def\ord{\mathop{\mathrm{ord}}\nolimits}
\def\Fr{\mathcal{F}}
  \newcommand{\rank}{\mbox{rank}}
\newcommand\cyr{%
\renewcommand\rmdefault{wncyr}%
\renewcommand\sfdefault{wncyss}%
\renewcommand\encodingdefault{OT2}%
\normalfont
\selectfont}
\DeclareTextFontCommand{\textcyr}{\cyr}
\begin{document}

\runninghead{Defining the integers}
\version{version 2.0}
\date{January 23, 2007}
\msc{03B25, 11U05}
\title{Defining the integers in large rings of a number  \\[1mm] field using one universal
quantifier}
\author{\textit{by} Gunther Cornelissen \textit{and} Alexandra Shlapentokh} 
 
\address{(gc) Mathematisch Instituut, Universiteit Utrecht, Postbus
80.010, 3508 TA Utrecht, Nederland, \\ email: {\tt
cornelis@math.uu.nl}

\medskip

\noindent (as) Department of Mathematics,
East Carolina University,
Greenville, NC 27858-4353, U.S.A. \\ email: {\tt shlapentokha@ecu.edu}}

\maketitle

\bigskip

{\footnotesize \hfill \emph{dedicated to Yuri Matiyasevich on his 60th birthday}

\hfill \dots the undecidable poem ``{{\cyr V Peterburge  my soi0demsya snova}}" \dots (\cite{Wesling})}


\bigskip

\begin{abstract}
\noindent Julia Robinson has given a first-order definition of the rational integers $\Z$ in the rational numbers
$\Q$ by a formula $(\forall \exists \forall \exists)(F=0)$ where the $\forall$-quantifiers run over a total of 8
variables, and where $F$ is a polynomial.

We show that for a large class of number fields,  not including $\Q$, for  every
$\varepsilon>0$, there exists a set of
primes $\calS$ of natural density exceeding $1-\varepsilon$, such that $\Z$ can be defined as a subset of the ``large'' subring  %
$$\{x \in K \, : \, \ord_{\pp}x \geq 0,\ \forall \, \pp \not \in \calS \}$$%
 of $K$  by a formula  where there is only one
$\forall$-quantifier.     In the case of $\Q$ we will need two quantifiers.  We also show that in
some cases one can define a subfield of a number field using just one universal quantifier.
\end{abstract}%

\section*{Introduction} 
\paragraph{Julia Robinson's work} In 1949 (\cite{Robinson:49}) Julia Robinson  showed that the set of integers $\Z$
is definable in the language of rings in the field of rational numbers $\Q$ by a first-order formula. This implies that the
first order theory of fields in undecidable.

The quantifier complexity of this formula was analysed in \cite{CZ}: it is equivalent to a formula of the form
$$ (\forall x^{(1)}_{1}  \dots x^{(1)}_{5})( \exists y^{(1)}_{1} \dots y^{(1)}_{4})(\forall x^{(2)}_{1}
   \dots x^{(2)}_{3})( \exists y^{(2)}_{1}) \ : \ F({\bf x}, {\bf y}) = 0, $$
where $F$ is a polynomial over $\Z$ in multi-variables $${\bf x}= (x_{1}^{(1)},\dots,x^{(2)}_{5}) \mbox{ and }
 {\bf y}= (y_{1}^{(1)},\dots,y_4^{(1)}, y_{1}^{(2)}).$$
Given the results of Julia Robinson one may ask by just how complex a formula can $\Z$ be defined in $\Q$? Here,
``complex'' refers to how many quantifiers of what kind need to be used, and how many quantifier alterations are necessary.
In view of Hilbert's Tenth Problem, it is particularly relevant to reduce the number of universal quantifiers --- since a
definition of $\Z$ in $\Q$ without any universal quantifiers would imply that Hilbert's Tenth Problem for $\Q$ has a
negative answer, and disprove a conjecture of Mazur on the topology of rational points, cf.\ \cite{Mazur}. The above formula
has a total of eight universal quantifiers and three quantifier alterations.

\paragraph{Recent developments} (a) In \cite{CZ}, it was shown that a (heuristically probable) conjecture about elliptic curves allows one
 to give a \emph{model} of $\Z$ over $\Q$ involving only one universal quantifier.
 Here, a model is essentially a countable definable set over $\Q$ with a bijection to $\Z$ such that via this bijection,
 the graphs of addition and multiplication on $\Z$ are definable subsets over $\Q$.

(b) Recall that any ring in between $\Z$ and $\Q$ is of the form $\Z[\frac{1}{S}]$ for some set of primes $S$.
In \cite{Poonen}, Poonen showed that there is a set of primes $S$ of full natural density such that $\Z$ has a model in $\Z[\frac{1}{S}]$ involving no
universal quantifiers whatsoever. Note however that this does not settle the question of defining $\Z$ as a diophantine \emph{subset} of $\Z[\frac{1}{S}]$.
The result was extended to number fields in \cite{PS}.

(c) Let $K$ be a number field and let $\calW_K$ be a set of primes of $K$. Define
$O_{K,\calW_K}$ to be the following ring:
\[%
O_{K,\calW_K} := \{x \in K: \ord_{\pp}x \geq 0, \,\, \forall \pp \not \in \calW_K\}.
\]%
If $\calW_K$ is infinite we will call these rings ``big'' or ``large''. The second named author has
given an existential definition of $\Z$ in some large subrings of the following fields: totally
real fields, their extensions of degree 2 and fields such that there exists an elliptic curve
defined over $\Q$, of positive rank over $\Q$ and of the same rank over the field in question (see
\cite{ Sh36, Sh33, Sh1, Sh6, Sh3}). The \emph{density of the set of inverted primes} in these
subrings \emph{is}, however, always \emph{bounded away from 1}, essentially by
$1-\frac{1}{[K:\Q]}$.

(d) In \cite{Po4} Poonen showed that $\Z$ can be defined in any number field using \emph{two} universal quantifiers and there are
exist  big subrings of $\Q$ with the set of inverted primes of density arbitrarily close to one and where $\Z$ is definable using
just \emph{one} universal quantifier.

\paragraph{Main results}
The aim of this work is to consider the problem of improving some of the above results in the
following sense: (a) unconditional on any conjectures;  (b) for subrings $\Z[\frac{1}{S}]$, where
$S$ is ``large'' in the sense of arbitrary high density $<1$; (c) such that it defines the actual
\emph{subset} $\Z$ of this large subring; (d) for other number fields instead of $\Q$.

The main results are as  follows:

\begin{introthm}
Let $K \not = \Q$ be a number field of one of the following types:%
\be%
\item $K$ is totally real;%
\item $K$ is an extension of degree two of a totally real number field;%
\item There exists an elliptic curve defined over $\Q$ and of positive rank over $\Q$ such that this curve
preserves its rank over $K$;

\ee%
Then for every $\varepsilon>0$, there exists a set of primes $\calW_K$ of $K$ of natural density exceeding
$1-\varepsilon$, such that $\Z$ can be defined as a subset of $O_{K,\calW_K}$  by a formula with
 only one $\forall$-quantifier.%
\end{introthm}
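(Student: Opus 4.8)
The plan is to combine two ingredients: the existential definitions of $\Z$ in large subrings of $K$ recalled in point (c) of the introduction, and the quaternion-algebra device behind point (d), which lets a single universal quantifier impose $\pp$-integrality at all but a density-$\varepsilon$ set of primes $\pp$. The one universal quantifier will be spent entirely on the second task; the rest of the definition will be existential.

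First, for each of the three types of $K$, the cited work of the second author (point (c)) provides a set of primes $\calT$ of $K$, of natural density bounded away from $1$ (essentially by $1-\tfrac1{[K:\Q]}$), together with an existential formula $\psi(t)$, of the form $\exists\,y_1,\dots,y_m\;\bigl(R(t,y_1,\dots,y_m)=0\bigr)$, that defines $\Z$ inside $O_{K,\calT}$. I would fix such a $\calT$, and then fix a finite set $\calZ$ of primes of $K$, disjoint from $\calT$ and of density $<\varepsilon$, to serve as a reservoir of auxiliary ramified places (together with the real places of $K$, if any). Setting $\calW_K:=\{\text{all primes of }K\}\smallsetminus\calZ$, we get a set of density $>1-\varepsilon$ containing $\calT$, so that $O_{K,\calW_K}\supseteq O_{K,\calT}\supseteq\Z$, and it suffices to express the condition ``$t\in O_{K,\calT}$'', for $t$ ranging over $O_{K,\calW_K}$, using a single universal quantifier and then to conjoin $\psi$.

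For the universal part, I would produce, for each prime $\pp\in\calW_K\smallsetminus\calT$ (that is, each $\pp\notin\calT\cup\calZ$), a quaternion algebra $H_\pp$ over $K$ ramified exactly at $\pp$ and at one or two places drawn from the reservoir --- such $H_\pp$ exist because a quaternion algebra over a number field is determined by any assignment of local invariants in $\tfrac12\Z/\Z$ to an even number of places. By Hasse--Minkowski, membership of $t$ in a suitable translate of the set of values over $K$ of the norm form of $H_\pp$ is an existential condition cutting out $E_\pp=\{x\in K:\ord_\pp x\ge 0\}$ (or a finite intersection of such sets over primes outside $\calT\cup\calZ$): this is Julia Robinson's and Poonen's treatment of $\Q$, transported to $K$. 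Because $x\in O_{K,\calW_K}$ already forces integrality at $\calZ$, intersecting $O_{K,\calW_K}$ with all the $E_\pp$ yields exactly $O_{K,\calT}$; and since the family $(H_\pp)$ is parametrised by a pair $(a,b)\in K^\times\times K^\times$ running over an existentially definable set, ``$t\in O_{K,\calT}$'' becomes a formula $(\forall a,b)(\exists\dots)\,\theta(t,a,b,\dots)$ with one universal quantifier (collapsing $(a,b)$ to a single variable and the matrix to one polynomial equation over $O_{K,\calW_K}$ is routine). The definition is then
\[
\Z=\bigl\{\,t\in O_{K,\calW_K}\ :\ (\forall a,b)(\exists\,\bar z,\bar y)\;\bigl(\theta(t,a,b,\bar z)\ \wedge\ R(t,\bar y)=0\bigr)\,\bigr\},
\]
the single $\forall a,b$ being pulled in front of the whole existential block: if $t\in\Z$ then $\theta(t,a,b,\cdot)$ holds for every admissible $(a,b)$ because $\Z\subseteq O_K\subseteq E_\pp$, and $R(t,\bar y)=0$ is solvable since $\psi$ defines $\Z$ in $O_{K,\calT}\supseteq\Z$; conversely the first conjunct forces $t\in O_{K,\calT}$ and then the second forces $t\in\Z$.

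The main obstacle is this ``conversely''. The formula $\psi$ was designed with its witnesses $\bar y$ ranging over $O_{K,\calT}$, whereas above they range over the larger ring $O_{K,\calW_K}$ (or over $K$), which a priori could produce spurious solutions with $t\notin\Z$. I would resolve this either by checking that the definitions of the second author are robust in this sense --- many of them are genuinely existential over $K$, with the relevant overring entering only through the distinguished variable $t$ --- or by using the same universal quantifier $(\forall a,b)$ to also constrain each $y_i$ to $O_{K,\calT}$, which forces one to exploit rigidity (finiteness) of the set of witnesses for a fixed $t$ so as not to spend a second universal quantifier; making this bookkeeping close is the delicate point. The remaining steps --- existence and local analysis of the $H_\pp$ and $E_\pp$, the density accounting for $\calW_K$ and $\calZ$, and collapsing a finite system of equations to a single one --- are routine.
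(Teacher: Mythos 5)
There are two genuine gaps here, and together they make the proposal unworkable as stated; the paper's route is structurally different.

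First, the quaternionic step is not routine --- it is the entire difficulty, and it is not available in the generality you need. To define $O_{K,\calT}$ inside $O_{K,\calW_K}$ by a single $\forall\exists$-formula you need an \emph{existentially parametrised} family of quaternion algebras whose ramification loci (i) never meet $\calT$ and (ii) sweep out all of $\calW_K\smallsetminus\calT$, and you need the complement of the admissible parameter set to be diophantine so that the implication under the $\forall$ stays existential. The prime sets reachable this way are controlled by splitting conditions in finitely many quadratic extensions --- that is precisely why, as the paper notes, the inverted primes in \cite{Po4} are the primes inert in a finite union of quadratic fields, why Poonen's one-universal-quantifier result is proved only for subrings of $\Q$, and why over general number fields he needs two universal quantifiers. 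The sets $\calT$ for which Shlapentokh's existential definitions of $\Z$ exist have a quite different and rigid structure, and there is no argument that $\calW_K\smallsetminus\calT$ is a union of ramification loci of an existentially parametrised family. Second, the ``conversely'' gap you flag is fatal to your two proposed repairs: Shlapentokh's existential definitions are not robust under enlarging the ring of witnesses (they rest on norm/Pell-type equations whose solutions must have denominators supported only in $\calT$), and relativising each witness $y_i$ to $O_{K,\calT}$ costs a fresh pair of universally quantified parameters per witness. Several universal quantifiers over the \emph{same} ring cannot be merged into one for free --- there is no diophantine pairing function available --- so this blows the quantifier budget.

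The paper avoids both problems by going through $\Q$ rather than through a smaller subring of $K$. It first produces a \emph{two}-universal-quantifier definition of $\Z$ in a big subring $O_{\Q,\calT_\Q}$ of $\Q$ (elliptic divisibility sequences give an existential model of $(\Z,+,|)$, and the formula of \cite{CZ} recovers multiplication from divisibility with one universal quantifier, plus one more to quantify over points of the curve). It then uses the known \emph{existential} (``vertical'') definability of $O_{K,\tilde\calU_K}\cap\Q$ in $O_{K,\tilde\calU_K}$ for the three classes of fields, so no witnesses ever range over the wrong ring. Finally, Proposition \ref{reduce} collapses the two universal quantifiers, which range over the subring of $\Q$, into a single universal quantifier over $O_{K,\tilde\calU_K}$ by encoding a tuple of up to $[K:\Q]$ elements of the small ring as the coordinate vector of one element of the big ring in a power basis (Lemma \ref{allappear} makes membership in, and non-membership in, the image of this encoding diophantine). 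This is exactly where the hypothesis $K\neq\Q$ enters, and it is the mechanism your proposal is missing: without a field extension to absorb the extra universal variables, you cannot get down to one quantifier by your route.
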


\begin{introthm}
 Let $K$ be a number field, including $\Q$.  Assume there exists an elliptic curve defined over $K$
of rank 1 over $K$. Then for every $\varepsilon>0$, there exists a set of primes $\calW_K$ of $K$ of natural density exceeding
$1-\varepsilon$, such that $\Z$ can be defined as a subset of $O_{K,\calW_K}$  by a formula with
 only two $\forall$-quantifiers.%
  \end{introthm}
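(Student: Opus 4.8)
\emph{Overall shape of the argument.} The plan is to realise $\Z$, as a subset of $O_{K,\calW_K}$, in the form
$$\Z=\Big(\bigcap_{(a,b)\in K\times K}A_{a,b}\Big)\cap B,$$
where $\{A_{a,b}\}$ is a Poonen-style family of subsets of $O_{K,\calW_K}$ that is uniformly \emph{existentially} definable in the parameters $(a,b)$ and satisfies $\bigcap_{(a,b)}A_{a,b}=O_K$, and where $B\subseteq O_{K,\calW_K}$ is an existentially definable set with $B\cap O_K=\Z$, built from the rank-one elliptic curve. Reading off the quantifiers, the only universal quantifiers are the two parameters $a$ and $b$, everything else being existential, so the definition has the shape $(\forall)(\forall)(\exists\cdots\exists)$. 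The set $\calW_K$ is chosen at the very end so that the finitely many ``bad'' primes are swept into its complement, a set of natural density less than $\varepsilon$.

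\emph{The family $A_{a,b}$: forcing integrality at all primes.} Following Poonen's two-quantifier construction, for $(a,b)\in K\times K$ I would let $A_{a,b}$ be the set of $t\in O_{K,\calW_K}$ for which a suitable ternary/quaternary quadratic form, with coefficients built polynomially from $a$, $b$, $t$, has a nontrivial zero; this is an existential condition. By Hasse--Minkowski, solvability over $K$ is equivalent to solvability over all completions, and local solvability at a prime $\pp$ is automatic except on a finite ``defect set'' $\Delta_{a,b}$ controlled by the Hilbert symbols $(a,b)_\pp$; Poonen's local computation arranges that on this defect set solvability is equivalent to $\ord_\pp t\ge 0$. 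Since $\Z\subseteq A_{a,b}$ for every $(a,b)$, and since the \emph{two} free parameters leave enough room to place any prescribed prime of $\calW_K$ into some $\Delta_{a,b}$, one gets $\bigcap_{(a,b)}A_{a,b}=\{t\in O_{K,\calW_K}:\ord_\pp t\ge 0\ \forall\pp\}=O_K$. Degenerate $(a,b)$ (for instance $ab=0$, or those producing isotropic forms) are absorbed by letting the predicate be vacuously true there, at no extra quantifier cost.

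\emph{The set $B$: cutting $O_K$ down to $\Z$ with the rank-one curve.} Fix $E/K$ of rank one, a generator $P$ of $E(K)$ modulo torsion, an auxiliary prime $\qq$ of good reduction, and a prime $\pp_0$ at which $P$ lies in the formal group; write $m_\qq$ for the order of the reduction $\bar P$, so that $\ord_\qq x([n]P)<0$ exactly when $m_\qq\mid n$, while $\ord_{\pp_0}x([n]P)$ grows linearly in $\ord_{\pp_0}n$. Using the $x$-coordinates of the multiples of $P$, the algebraic relations between $x([n]P)$ and $x([n{+}1]P)$ coming from the addition law, and these two valuation patterns, I would build --- in the style of the existential definitions of $\Z$ in large subrings due to the second named author --- an existential formula $B$ over $O_{K,\calW_K}$ such that $t\in B\iff t\in\Z$ \emph{for $t$ already known to lie in $O_K$}. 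Rank exactly one is what makes $\Z P$ the whole free part of $E(K)$, so that the indexing $n\mapsto[n]P$ is faithful and the construction recognises honest integers rather than some $\Z$-indexed set of algebraic numbers; no universal quantifier enters here.

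\emph{Choice of $\calW_K$, and the main obstacle.} Finally, by Chebotarev's density theorem I would take the complement of $\calW_K$ to be a set of primes of density less than $\varepsilon$ containing a suitable finite set: all $\pp\mid 2$, the primes of bad reduction of $E$, the primes dividing the order of $E(K)_{\tors}$ and the index $[E(K):\Z P]$, and the auxiliary primes $\qq$ and $\pp_0$ (the finitely many archimedean places being dealt with directly in the choice of forms). One then checks that such a finite set fits inside a density-$\varepsilon$ set (clear) while every prime of the resulting $\calW_K$ is still covered by some defect set $\Delta_{a,b}$ (this is exactly where the two free parameters are needed). I expect the genuine difficulty to be \textbf{not} any single module but (i) making the elliptic formula $B$ survive over a ring in which a density-$(1-\varepsilon)$ set of primes has been inverted --- the known existential definitions of $\Z$ hold only in large subrings whose inverted primes have density bounded away from $1$, roughly by $1-1/[K:\Q]$, so one must exploit that here $B$ only has to separate $\Z$ from $O_K\setminus\Z$ rather than also impose integrality --- and (ii) verifying that the quadratic-form machinery introduces no further universal quantifier and no bad prime outside the density-$\varepsilon$ set. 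Over $\Q$ the two universal quantifiers cannot be reduced by this route, since the Poonen family genuinely needs two parameters to reach every prime; this is precisely why the one-quantifier improvement of the first theorem above must call on the extra structure (total reality, or a curve retaining its $\Q$-rank over $K$) that is not available here.
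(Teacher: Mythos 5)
Your route is not the paper's: you follow Poonen's quadratic-form construction from his universal--existential paper, spending both universal quantifiers on a family $A_{a,b}$ that forces integrality at every prime, and then hoping to cut $O_K$ down to $\Z$ by a purely existential set $B$. The paper explicitly distances itself from that technique, and the decisive gap in your version is the set $B$. Under the hypotheses of this theorem the only input is an elliptic curve of rank one over $K$; it need not be defined over $\Q$, nor have positive rank over $\Q$, and $K$ need not be totally real. No existential formula with $B\cap O_K=\Z$ is known in that generality --- the Shlapentokh-style existential definitions you invoke require exactly the extra hypotheses (totally real, degree two over totally real, or a curve over $\Q$ retaining its rank) that belong to the \emph{first} main theorem, not this one. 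A rank-one curve over $K$ yields, existentially, only a model of $(\Z,+,\mid)$, via divisibility among the denominators $\dd_n$ of the $x$-coordinates of multiples of a point (Lemma \ref{le:defdiv}); recovering multiplication from divisibility is precisely where a universal quantifier must be paid (Lemma \ref{le:complexity}). Your plan has already spent both universal quantifiers on integrality and has none left for this step, so the argument cannot close; your own caveat (i) is in fact the whole theorem.

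For comparison, the paper allocates its two universal quantifiers entirely inside the arithmetic: one comes from the Cornelissen--Zahidi $(\exists\forall\exists)$ definition of multiplication in $(\Z,+,\mid,\neq)$, and translating that single integer quantifier into quantification over points of $E$ costs a second (three coordinate variables reduced to two via Poonen's result that the set of non-squares is diophantine). The passage from this \emph{model} of $\Z$ to the actual \emph{subset} $\Z$ is then purely existential, done by a norm-bound argument (Lemma \ref{le:bounds}, Proposition \ref{define}) relative to an auxiliary extension $L/K$; the inverted primes are those without a degree-one factor in $L/K$ (density $1-1/p$ by Chebotarev) together with the complement of the density-zero set $\calV_K(P)$ --- no Hasse--Minkowski and no Hilbert symbols anywhere. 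A secondary defect of your write-up: you quantify over $(a,b)\in K\times K$, but over the ring $O_{K,\calW_K}$ each field parameter must be expressed as a ratio of two ring variables, which is exactly the ``mechanical translation'' the introduction warns would turn two universal quantifiers into four; you would at least need to argue that parameters drawn from the ring itself already exhaust all the relevant square classes, which you do not do.
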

 
Observe that the fact that $\Z$ can be defined over $\Q$ using two quantifiers does
not imply directly that $\Z$ can be defined over a ring of integers using two quantifiers: in translating a definition
over $\Q$ to a ring of integers, one has to represent a rational number as a ratio of two elements of the ring.
Thus a ``mechanical'' translation of Poonen's result over $\Q$ would produce a definition with four
universal quantifiers.

As we have mentioned above, in \cite{Po4}, Poonen also proved a that integers can be defined using just one quantifier over a big
subring of $\Q$. His result was obtained by different techniques and the sets of inverted primes are different from ours: in
\cite{Po4}, the inverted primes are inert in a finite union of quadratic extensions, whereas in the above theorem, primes without
relative degree one factors in a fixed extension are inverted, together with a density zero set related to the elliptic curve used
in the construction. These results raise the question of characterization of large subrings of $\Q$ in which $\Z$ admits a
diophantine definition, or a diophantine model, or a definition or model using $n \geq 1$ universal quantifiers; and in
particular whether there is any difference between these rings. Results like those above and in \cite{Po4} should be seen as a
first contribution to this type of questions.

We also prove the following theorems concerning definability with only one quantifier.

\begin{introthm}%
Let $K\not = \Q$ be a number field.  Assume there exists an elliptic curve defined over $K$
of rank 1 over $K$. Then for every $\varepsilon>0$, there exists a set of primes $\calW_K$ of $K$ of natural density exceeding
$1-\varepsilon$, such that $\Q \cap O_{K,\calW_K}$ can be defined over $O_{K,\calW_K}$  by a formula with
 only one  $\forall$-quantifier.%
\end{introthm}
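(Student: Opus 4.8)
The plan is to characterise $\Q\cap O_{K,\calW_K}$ as the set of elements of the ambient ring $O_{K,\calW_K}$ that are ``rational in every completion'', letting the elliptic curve carry the existential part of the formula and spending the lone universal quantifier on a local--global step. Note first that $\Q\cap O_{K,\calW_K}=\Z[1/S]$, where $S$ is the set of rational primes all of whose factors in $K$ lie in $\calW_K$; so the real task is to detect, among elements $t$ of $O_{K,\calW_K}$, membership in the prime subfield. I would choose $\calW_K$ as the union of two pieces: (i) the set of primes of $K$ having no relative degree-one factor in an auxiliary \emph{cyclic} extension $M/K$ of degree $N$ --- by Chebotarev this set has natural density $1-1/N$, which exceeds $1-\varepsilon$ for $N>1/\varepsilon$; and (ii) a natural-density-zero set $\calB$ of ``bad'' primes attached to the curve $E$ and to a fixed generator $P$ of $E(K)$ modulo torsion (the primes of bad reduction and the exceptional primes of the divisibility sequence of $P$), which is exactly the kind of density-zero correction that already appears in the construction for the earlier theorems. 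The purpose of $M$ is that the norm equations used below are existentially tractable precisely at the primes \emph{outside} $\calW_K$, i.e.\ those that do have a degree-one factor in $M$; the purpose of $E$ is to make the relevant Diophantine conditions available over a general $K$, where the purely multiplicative (unit-group / norm-one-torus) methods for, say, totally real fields are not.

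Next, following the existential constructions of the second named author and of Poonen--Shlapentokh, with the rank-one Mordell--Weil group of $E$ substituting for those multiplicative tools, one obtains an existential formula over $O_{K,\calW_K}$ that controls valuations and denominators at the primes outside $\calW_K$. The single universal quantifier is then used for the Galois side: for $t\in O_{K,\calW_K}$ one writes ``$t\in\Q$'' as ``$\forall x\,\neg\big(x\text{ witnesses that }t_v\notin\Q_\ell\text{ for some finite place }v\text{ of }K\big)$'', the universal variable $x$ running over an element encoding a finite place $v$ together with a would-be witness of local irrationality, in the style of Julia Robinson's and Poonen's uses of a single universal quantifier. By the local--global principle (if $t\in K$ has minimal polynomial of degree $\geq 2$ over $\Q$, then Chebotarev produces a finite place at which $t$ leaves the diagonal copy of the corresponding $\Q_\ell$), the universal statement forces $t\in\Q$; together with the existential part it forces $t\in\Q\cap O_{K,\calW_K}=\Z[1/S]$. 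Prenexing leaves exactly one $\forall$.

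The real difficulty is the quantifier economy. All the inputs --- the elliptic-curve existential package, the Chebotarev density estimate, the local--global criterion for membership in $\Q$, the absorption of the density-zero set $\calB$ --- are individually routine, but a naive combination costs \emph{two} universal quantifiers: one to excise elements with a denominator at a prime of $S$ (i.e.\ to cut $\Z[1/S]$ down to $\Z$), and one for the local--global step. The theorem asserts that for the \emph{subfield} $\Q\cap O_{K,\calW_K}$, unlike for $\Z$ itself in the two-quantifier result above, the first excision is unnecessary, since $\Z[1/S]$ already carries all the $S$-denominators; this is why targeting the larger ring saves a quantifier. Making this precise requires (a) arranging $M$ so that the test ``$t_v\in\Q_\ell$'' is expressible by an existential norm equation exactly at the primes outside $\calW_K$ that one needs to check, and verifying that this set of primes already suffices to conclude $t\in\Q$; (b) checking that the elliptic-curve construction of the preceding sections indeed goes through existentially for this particular $\calW_K$; and (c) confirming that removing the density-zero set $\calB$ leaves the density bound intact. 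I expect step (a) --- showing that local rationality at the ``good'' primes can be packaged into one universal quantifier without a second one creeping in --- to be the main obstacle.
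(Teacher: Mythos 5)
Your plan contains a genuine gap at exactly the point you flag as ``the main obstacle'', and that obstacle is not a technicality. The formula you propose, $\forall x\,\neg\bigl(x\mbox{ witnesses that }t\mbox{ is locally irrational somewhere}\bigr)$, is a one-universal-quantifier $\forall\exists$-formula only if the \emph{negation} of the witnessing condition is itself diophantine over $O_{K,\calW_K}$. Negations of existential conditions are not in general existential, and nothing in your outline supplies the required existential rewriting; moreover, the device of ``encoding a finite place $v$ by a ring element'' would itself have to be decoded existentially on both sides of the negation. So as written the proposal does not yield a $\forall\exists$-definition, and the Chebotarev/local--global input, while true, is never connected to a formula of the required shape.

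The paper's proof (Theorem \ref{rankonedown}) avoids quantifying over places altogether and spends its single universal quantifier quite differently: the $\forall$-variable $v$ is a \emph{precision parameter}. One fixes a single rational prime $q$ with factors $\qq_1,\dots,\qq_r$ in $K$ and asserts: for all $v$ there exist $x_{jm_0m_1}$ and $x_{\ell m_0m_1}$ with $j\mid\ell$ (expressed via the diophantine divisibility model), with $\ord_{\qq_i}x_{jm_0m_1}<\ord_{\qq_i}v$ for each $i$, and with $x_{jm_0m_1}/x_{\ell m_0m_1}-u$ having order at each $\qq_i$ exceeding half the depth of the denominator of $x_{jm_0m_1}$. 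Lemma \ref{le:equiv} forces $x_{jm_0m_1}/x_{\ell m_0m_1}$ to agree with the rational square $(\ell/j)^2$ modulo a deep power of each $\qq_i$, so $u$ is approximated arbitrarily well at the $\qq_i$ by rational integers, and the elementary power-basis argument of Proposition \ref{extensions} then gives $u\in\Q$; conversely, Lemma \ref{multiple} together with sums of four squares and ratios realizes every rational. Everything inside the $\forall$ is genuinely existential (orders at the finitely many fixed $\qq_i$ are existentially definable), so one universal quantifier suffices. Note also that for this particular theorem your auxiliary cyclic extension $M$ is unnecessary: the only constraints on $\calW_K$ are that it contain $\calS_{\bad}$ and avoid the density-zero set $\calV_K(P)$ needed for the divisibility model, which is why the density can be pushed above $1-\varepsilon$. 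Your observation that targeting $\Q\cap O_{K,\calW_K}=\Z[1/S]$ rather than $\Z$ is what saves a quantifier is correct, but the mechanism by which the remaining quantifier is actually deployed is the missing idea.
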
%

\begin{introthm}
\label{down}
Let $M/K$ be a number field extension.  Assume there exists an elliptic curve $E$
defined over $K$  such that $\rank E >0$ and $[E(M):E(K)] < \infty$.  Let $\calW_M$ be any set
of $M$ primes (including the set of all $M$-primes  and the empty set).  Then $O_{M,\calW_M} \cap
K$ is definable over  $O_{M,\calW_M}$ using just one universal quantifier.
\end{introthm}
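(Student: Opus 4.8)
The key point is that an elliptic curve $E/K$ of positive rank over $K$, with $E(M)$ of finite index over $E(K)$, gives us a way to "see" $K$ inside $M$ using the arithmetic of the curve. Fix a point $P \in E(K)$ of infinite order; then all the multiples $\{nP : n \in \Z\}$ lie in $E(K)$, and since $[E(M):E(K)] < \infty$, the group $E(M)$ is contained in $\frac{1}{N}$ of $E(K)$ for some fixed $N$ (more precisely, $N \cdot E(M) \subseteq E(K)$ for a suitable $N$, after dealing with torsion). So a point $Q \in E(M)$ satisfies $NQ \in E(K)$, and conversely a point of $E(M)$ whose $N$-th multiple lies in $E(K)$ is, up to bounded torsion, already rational over $K$. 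The plan is to use this to give a definition of $O_{M,\calW_M} \cap K$: roughly, an element $x \in O_{M,\calW_M}$ should lie in $K$ if and only if $x$ arises as a coordinate (in some fixed Weierstrass model) of a point $Q$ whose multiple $NQ$ has coordinates in $K$, and to encode "having coordinates in $K$" by the condition that $x$ is fixed by (equivalently, lies in the image of a trace or norm construction relative to) the extension $M/K$.

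First I would set up the Weierstrass model of $E$ over $K$ and recall that for a point $Q=(u,v) \in E(M)$, the multiple $NQ$ has coordinates that are rational functions (division polynomials) in $u,v$ with coefficients in $K$. I would then observe that, because the rank does not grow and $[E(M):E(K)]<\infty$, there is a fixed $N$ such that: $Q \in E(M)$ and $NQ \in E(K)$ forces the $x$-coordinate of $Q$ to lie in a finite extension of $K$ of bounded degree inside $M$, hence (after a further bounded multiple to kill torsion contributions) in $K$ itself. The universal-quantifier-economy comes from the following standard trick, already exploited in the second author's earlier work and in \cite{Po4}: the statement "$z \in K$" for $z \in M$ is equivalent to "$z$ is a fixed point of all $K$-embeddings $M \hookrightarrow \Kbar$", but over a ring one cannot quantify over embeddings; instead one writes $z \in K \iff \exists$ a point $T$ on (a twist of) $E$ defined over $M$ with prescribed properties, combined with one $\forall$-quantifier that enforces a norm/trace identity. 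Concretely, I expect the definition to read: $x \in O_{M,\calW_M}\cap K$ iff $(\exists \text{ data on } E)(\forall t)\big(\text{polynomial identity in } x, t, \text{data} = 0\big)$, where the single $\forall t$ ranges over $M$ and forces the auxiliary point to actually be $K$-rational by an argument like "a polynomial over $M$ that vanishes identically as a function of the $K$-rational multiples $nP$ must have all coefficients in $K$."

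The main obstacle, and where the real work lies, is converting the "multi-quantifier" reason why $z\in K$ — namely fixedness under a Galois-type action — into a definition that uses exactly one universal quantifier and is valid over the large ring $O_{M,\calW_M}$ rather than over the field $M$. Over a field one has Julia Robinson–style tricks; over $O_{M,\calW_M}$ one must be careful that the auxiliary curve points and denominators involved are units or at least lie in the ring, which is why the flexibility to take $\calW_M$ arbitrary (even all primes, i.e.\ $O_{M,\calW_M}=M$, or empty, i.e.\ the ring of integers) is built into the statement: the construction should not care about which primes are inverted, only about the curve. I would handle this by choosing the auxiliary point $P\in E(K)$ and the integer $N$ first, then defining $K$-rationality of a point $Q\in E(M)$ by: there exist $a,b \in M$ with $(a,b)=N\cdot Q$ on $E$ and such that for all $n$, the value of the $n$-th division polynomial evaluated at $(a,b)$ lies in the subgroup generated by $P$ — a Pell-equation / norm-form type condition that is expressible with a single $\forall$. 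Checking that this single-quantifier formula is both sound (it never lets in an element of $M\setminus K$) and complete (it captures every element of $K$ in the ring) is the crux; the positivity-of-rank hypothesis is exactly what guarantees the infinitely many $K$-points $nP$ needed to make the $\forall$-test rigid, and the finite-index hypothesis is what makes the test pass for genuinely $K$-rational inputs.
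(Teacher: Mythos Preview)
Your proposal correctly isolates the one structural fact that makes the theorem go: setting $r=[E(M):E(K)]$, every point of $rE(M)$ already lies in $E(K)$, so ratios of $x$-coordinates of such points are elements of $K$. But from that point on the argument does not crystallise into an actual formula, and several of the suggested mechanisms do not work. Saying ``$x\in K$ iff $x$ is a coordinate of some $Q$ with $NQ\in E(K)$'' is false (most elements of $K$ are not $x$-coordinates at all), the claim ``a polynomial over $M$ vanishing on all $K$-multiples $nP$ must have $K$-coefficients'' is simply untrue, and a condition of the shape ``for all $n$ the $n$-th division polynomial \dots'' quantifies over $\Z$, not over $M$, so it is not a first-order formula in the language of rings over $O_{M,\calW_M}$. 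In short, you have the ingredients but not the recipe.

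The missing idea is a \emph{$p$-adic approximation scheme}. Fix one prime $\mathfrak Q$ of $K$ with factors $\qq_1,\dots,\qq_k$ in $M$. The paper first proves an elementary lemma (Proposition~\ref{extensions}): if $u\in M$ is integral at $\mathfrak Q$ and can be approximated to arbitrary $\qq_i$-adic precision simultaneously by elements of $K$, then $u\in K$. Next, Lemma~\ref{le:equiv} says that for a point $P$ of infinite order there is $m_1$ with
\[
\dd(x_{lm_1})\ \Big|\ \nn\Bigl(\frac{x_{lm_1}}{x_{klm_1}}-k^2\Bigr)^2,
\]
so the ratio $x_{lm_1}/x_{klm_1}\in K$ approximates the square integer $k^2$ to precision governed by $\ord_{\qq_i}x_{lm_1}$, and that precision can be made arbitrarily large. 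The one universal quantifier is then the \emph{precision parameter}: $u\in K$ iff for every $z\in M$ there exist two points of $rE(M)$ whose $x$-coordinates $a_1,a_2$ satisfy $\ord_{\qq_i}a_j<\ord_{\qq_i}z$ and $2\ord_{\qq_i}(u-a_1/a_2)\ge -\ord_{\qq_i}a_2$. One direction is Proposition~\ref{extensions}; the other is Lemma~\ref{le:equiv} plus four-squares plus writing elements of $K$ as $\Q$-linear combinations of a fixed basis. Order conditions at a fixed finite list of primes are existentially definable in any $O_{M,\calW_M}$, which is why the choice of $\calW_M$ is irrelevant. This approximation device is what your proposal is lacking.
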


\section{Elliptic Curves and Existential Models of $(\Z, +, |)$ over $O_{K,\calW}$}
\begin{se}
In this section we will use elliptic curves to define divisibility in large rings.  Most of
the technical details are taken from \cite{Po} and \cite{PS}.
 \end{se}
\begin{notation}
\label{S:notation section}
The following notation will be used for the rest of this section.
\begin{itemize}%
\item $K$ is a number field.%
\item $E$ is an elliptic curve of rank~1 defined over $K$ (in particular, we assume such an
$E$ exists).%
\item We fix a Weierstrass equation $W: y^2=x^3+ax+b$ for $E$ with coefficients in the ring of integers of $K$. %
\item $E(K)_\tors$ is the torsion subgroup of $E(K)$.%
\item $t$ is an even multiple of $\#E(K)_\tors$.%
\item $Q \in E(K)$ is such that $Q$ generates $E(K)/E(K)_\tors$.%
\item $P:=tQ$.%
\item $\calP_{\Q}=\{2,3,5,\dots\}$ is the set of rational primes.%
\item $\calP_K$ is the set of all finite primes of $K$.%
\item Let ${\mathcal S}_\bad = \calS_{\bad}(W,P,K) \subseteq \calP_K$ consist of the primes that ramify in $K/\Q$,
the primes for which the reduction of the chosen Weierstrass model is singular (this includes all primes above
$2$), and the primes at which the coordinates of $P$ are not integral.%
\item $h_K$ is the class number of $K$.
\item Write $nP=(x_n,y_n)=(x_n(P), y_n(P))$ where $x_n,y_n \in K$.%
\item Let the divisor of $x_n(P)$ be of the form%
\[%
\frac{{\mathfrak a}_n}{{\mathfrak d}_n}{\mathfrak b}_n=\frac{{\mathfrak a}_n(P)}{{\mathfrak
d}_n(P)}{\mathfrak b}_n(P)
\]%
where
\begin{itemize}%
\item ${\mathfrak d}_n=\prod_{{\mathfrak q}}{\mathfrak q}^{-a_{{\mathfrak q}}}$, where the product
is taken over all primes ${\mathfrak q}$ of $K$ not in $\calS_{\bad}$ such that $a_{{\mathfrak
q}}=\ord_{{\mathfrak q}}x_n <0$.%
\item ${\mathfrak a}_n=\prod_{{\mathfrak q}}{\mathfrak q}^{a_{{\mathfrak q}}}$, where the product is
taken over all primes ${\mathfrak q}$ of $K$ not in $\calS_{\bad}$ such that $a_{{\mathfrak
q}}=\ord_{{\mathfrak q}}x_n >0$.%
\item ${\mathfrak b}_n = \prod_{{\mathfrak q}}{\mathfrak q}^{a_{{\mathfrak q}}}$, where the product
is taken over all primes ${\mathfrak q}\in \calS_{\bad}$ and $a_{{\mathfrak q}}=\ord_{{\mathfrak
q}}x_n$.%
\end{itemize}%
\item For $n$ as above, let ${\calS}_n = {\calS}_n(P)=\{\pp \in {\mathcal P}_K : \pp | {\mathfrak
d}_n\}$. By
definition of $\calS_\bad$ and ${\mathfrak d}_n$, we have $\calS_1=\emptyset$.%
\item For $\ell \in \calP_{\Q}$, define $a_\ell$ to be the smallest positive number such that $\ell^{a_{\ell}} >
C$, where $C$ is defined in Proposition \ref{prop:biggerS} below. For all but finitely many primes $\ell$ we
have that $a_\ell =1$. %
\item For $j \in \Z_{\geq 1}$, $\ell \in \calP_{\Q}$,  let $\pp_{\ell^{j}}(P)=\pp_{\ell^j}$ be a prime of largest norm in
$\calS_{\ell^{j}} \setminus \calS_{\ell^{j-1}}$, if such a prime exists.%
\item Let $m_0 = \prod_{a_l >1}\ell^{a_{\ell}}$.
\item Let $\calV_K  =\calV_K(P) = \{\pp_{\ell^j}: \ell \in \calP_{\Q}, j \in \Z_{>0}\}$.%
\item Let $\calW_K \subset \calP_K$ satisfy the following requirements:  $\calV_K \subseteq \calP_K
\setminus \calW_K$ and $\calS_{\bad} \subset \calW_K$.
\item For $n$ as above, let $d_n = \norm_{K/\Q} {\mathfrak d}_n \in \Z_{\ge 1}$.%
\end{itemize}%
\end{notation}%
The following results can be found in \cite{PS}.%
  \begin{lem}
\label{le:orderchange}
Let $n \in \Z_{\ge 1}$.
Suppose that ${\mathfrak t} \in \calP_K$ divides ${\mathfrak d}_n$,
and $p$ is a rational prime.
\be
 \item If ${\mathfrak t} \mid p$,
then $\ord_{{\mathfrak t}}{\mathfrak d}_{pn}=\ord_{{\mathfrak t}}{\mathfrak d}_n+2$.
 \item If ${\mathfrak t} \nmid p$, then $\ord_{{\mathfrak t}}{\mathfrak d}_{pn}= \ord_{{\mathfrak t}}{\mathfrak d}_n$.
\ee
Consequently if $j \Big{|} k$ then $\dd_j \Big{|} \dd_k$. \qed
\end{lem}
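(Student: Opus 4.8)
The plan is to translate the statement into the formal group of $E$ at ${\mathfrak t}$ and read it off from the shape of the multiplication‑by‑$p$ power series; this is the standard circle of ideas from the reduction theory of elliptic curves at a good prime (Silverman, \emph{The Arithmetic of Elliptic Curves}, Ch.~VII) used in \cite{Po} and \cite{PS}. Note first that, since $\dd_n$ involves by construction only primes outside $\calS_\bad$, the hypothesis ${\mathfrak t}\mid\dd_n$ already forces ${\mathfrak t}\notin\calS_\bad$; hence $W$ has good reduction at ${\mathfrak t}$, the prime ${\mathfrak t}$ is unramified over $\Q$, and — because $\calS_\bad$ contains every prime above $2$ — the residue characteristic of ${\mathfrak t}$ is odd.

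Write $v=\ord_{\mathfrak t}$ and, for $m\ge 1$, $z_m=-x_m/y_m$. First I would recall the usual dictionary at our good prime ${\mathfrak t}$: for any $m\ge 1$, ${\mathfrak t}\mid\dd_m$ holds if and only if $v(x_m)<0$, if and only if $mP$ lies in the kernel $E_1(K_{\mathfrak t})$ of reduction modulo ${\mathfrak t}$, and in that case $v(x_m)=-2\,v(z_m)$ and $v(y_m)=-3\,v(z_m)$ with $v(z_m)\ge 1$, so that $v(\dd_m)=2\,v(z_m)$; moreover the isomorphism $E_1(K_{\mathfrak t})\cong\hat E$ given by $R\mapsto -x(R)/y(R)$ carries multiplication by $p$ to $[p]_{\hat E}$. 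Consequently, whenever ${\mathfrak t}\mid\dd_n$ we have $z_{pn}=[p]_{\hat E}(z_n)$, and the whole statement reduces to comparing $v(z_{pn})$ with $v(z_n)$.

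Now write $[p]_{\hat E}(T)=pT+\sum_{k\ge 2}c_kT^k$ (the linear coefficient is $p$ because $[m](T)=mT+O(T^2)$ for every formal group) and put $r=v(z_n)\ge 1$. If ${\mathfrak t}\nmid p$, then $p$ is a unit at ${\mathfrak t}$, so $v(pz_n)=r$ while $v(c_kz_n^k)\ge kr\ge 2r>r$ for $k\ge 2$; hence $v(z_{pn})=r$ and $v(\dd_{pn})=v(\dd_n)$, which is part (2). If ${\mathfrak t}\mid p$, then $v(p)=1$ since ${\mathfrak t}$ is unramified over $\Q$, so $v(pz_n)=r+1$; for $r\ge 2$ every remaining term has $v(c_kz_n^k)\ge 2r\ge r+2>r+1$ and we are finished, while for $r=1$ I would use the structure of multiplication by $p$ in characteristic $p$: the reduction $\widetilde{[p]}$ is the $[p]$‑series of the formal group of $\tilde E$, which has height $1$ or $2$, hence factors through Frobenius and so is a power series in $T^{p}$ with no terms of degree between $2$ and $p-1$; therefore $v(c_k)\ge 1$ for $2\le k\le p-1$, and with $p\ge 3$, $r=1$ we get $v(c_kz_n^k)\ge 1+k\ge 3$ for $2\le k\le p-1$ and $v(c_kz_n^k)\ge k\ge p\ge 3$ for $k\ge p$, all strictly larger than $v(pz_n)=2$. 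So in every case $v(z_{pn})=r+1$, hence $v(\dd_{pn})=v(\dd_n)+2$, which is part (1).

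Finally, parts (1) and (2) give $v(\dd_n)\le v(\dd_{pn})$ at every prime ${\mathfrak t}$ dividing $\dd_n$ and every rational prime $p$, while $v(\dd_n)=0\le v(\dd_{pn})$ at all other primes; thus $\dd_n\mid\dd_{pn}$, and writing $k/j$ as a product of (not necessarily distinct) rational primes and iterating yields $\dd_j\mid\dd_k$ whenever $j\mid k$. The only step going beyond the trivial expansion $[p](T)=pT+O(T^2)$ is the vanishing of the degree $2,\dots,p-1$ coefficients of $\widetilde{[p]}$ needed in the case ${\mathfrak t}\mid p$, $r=1$ — i.e. the Frobenius factorization of $[p]$ in characteristic $p$ — and this is exactly where putting the primes above $2$ into $\calS_\bad$ gets used; I expect this to be the one place demanding care.
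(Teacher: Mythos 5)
Your argument is correct and complete: the reduction to the formal group at the good, unramified, odd prime ${\mathfrak t}$, the exact computation $v(z_{pn})=v(z_n)+v(p)$ via $[p](T)=pT+O(T^2)$ together with the fact that $[p](T)\equiv g(T^p)\bmod{\mathfrak t}$ to handle the critical case $v(z_n)=1$, ${\mathfrak t}\mid p$, is precisely the standard proof. The paper itself states this lemma without proof, citing \cite{PS} (and \cite{Po}), and your write-up is essentially the argument given there, so there is nothing to correct.
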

In \cite{PS} it is assumed that $p \not =2$ but the proof is unchanged in that case also.
\begin{prop}[divisibility properties]%
\label{le:Po3.1}%
Let ${\mathfrak R}$ be an integral divisor of $K$. Then $$\{n \in \Z \setminus \{0\}: {\mathfrak R} \mid {\mathfrak
d}_n(P)\} \cup \{0\}$$ is a subgroup of $\Z$. \qed
\end{prop}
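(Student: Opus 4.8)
The plan is to reduce the statement to the case of a single prime power and then recognise the resulting set as the preimage of a congruence subgroup of $E(K_\qq)$ under the homomorphism $n\mapsto nP$. Write $\mathfrak R=\prod_\qq\qq^{e_\qq}$ with almost all $e_\qq=0$. Since $\mathfrak R\mid\dd_n(P)$ means precisely that $e_\qq\le\ord_\qq\dd_n(P)$ for every $\qq$, and a finite intersection of subgroups of $\Z$ is a subgroup, it suffices to prove the statement for $\mathfrak R=\qq^{\,e}$ with $e\ge 1$. If $\qq\in\calS_\bad$, then $\ord_\qq\dd_n(P)=0$ for all $n$ directly from the definition of $\dd_n$, so $\{n\neq0:\qq^{\,e}\mid\dd_n(P)\}$ is empty and the set in question is $\{0\}$, a subgroup; hence assume $\qq\notin\calS_\bad$ from now on.

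Fix such a $\qq$ and pass to the completion $K_\qq$. Since $\qq\notin\calS_\bad$, the chosen Weierstrass model $W$ is nonsingular modulo $\qq$, so $E$ has good reduction at $\qq$ and $E(K_\qq)$ carries its standard decreasing filtration by the congruence subgroups
\[
E_i(K_\qq)=\{R\in E(K_\qq):\ \ord_\qq x(R)\le -2i\}\cup\{O\}\qquad(i\ge1),
\]
the kernels of reduction modulo $\qq^{\,i}$, which are subgroups of $E(K_\qq)$ (see \cite{Po}). Moreover, comparing $\qq$-valuations in the Weierstrass relation $y_n^{\,2}=x_n^{\,3}+ax_n+b$ with $a,b$ integral gives, whenever $\ord_\qq x_n<0$, that $\ord_\qq(x_n^{\,3})=3\ord_\qq x_n<\ord_\qq x_n\le\ord_\qq(ax_n+b)$, hence $2\ord_\qq y_n=3\ord_\qq x_n$; so $\ord_\qq x_n$ is a negative \emph{even} integer whenever it is negative.

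Now, by the definition of $\dd_n$ one has $\ord_\qq\dd_n(P)=\max(0,\,-\ord_\qq x_n)$ for $\qq\notin\calS_\bad$. Therefore $\qq^{\,e}\mid\dd_n(P)$ if and only if $\ord_\qq x_n\le -e$, which by the parity just established is equivalent to $\ord_\qq x_n\le -2\lceil e/2\rceil$, i.e.\ to $nP\in E_{\lceil e/2\rceil}(K_\qq)$. Since $P=tQ$ is a nonzero multiple of the infinite-order point $Q$ (a generator of the free part of $E(K)$, which has rank one), $P$ has infinite order, so $nP=O$ only for $n=0$; hence, with $i=\lceil e/2\rceil$,
\[
\{n\in\Z\setminus\{0\}:\ \qq^{\,e}\mid\dd_n(P)\}\cup\{0\}=\{n\in\Z:\ nP\in E_i(K_\qq)\}.
\]
The right-hand side is the preimage of the subgroup $E_i(K_\qq)$ of $E(K_\qq)$ under the group homomorphism $\Z\to E(K_\qq)$, $n\mapsto nP$, and a preimage of a subgroup is a subgroup. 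Intersecting over the finitely many $\qq$ dividing $\mathfrak R$ completes the proof.

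I do not expect a real obstacle here; the two points that need attention are (i) separating off the primes of $\calS_\bad$, at which $\dd_n(P)$ is a $\qq$-unit, so the divisibility is either vacuous or impossible, and (ii) the parity of $\ord_\qq x_n$, which is exactly what guarantees that the condition $\qq^{\,e}\mid\dd_n(P)$ cuts out an entire congruence subgroup $E_i(K_\qq)$ rather than a proper intermediate subset of $E(K_\qq)$, so that the preimage argument applies cleanly. (The proposition can alternatively be extracted from the divisibility behaviour of $(\dd_n)$ recorded in Lemma~\ref{le:orderchange} together with a strong-divisibility property of the sequence, but the filtration argument above is the most direct.)
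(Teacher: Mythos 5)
Your argument is correct: the reduction to a single prime power, the exclusion of the $\calS_\bad$ primes, the parity of $\ord_\qq x_n$ forced by the integral Weierstrass equation, and the identification of $\{n: \qq^e \mid \dd_n\}\cup\{0\}$ with the preimage of the congruence subgroup $E_{\lceil e/2\rceil}(K_\qq)$ under $n\mapsto nP$ all check out. The paper itself gives no proof of this proposition (it is quoted from \cite{PS}), and the formal-group filtration argument you give is precisely the one used in the cited sources, so this is essentially the same approach.
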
%
\begin{prop}[growth rate]
 \label{le:denomheight}%
There exists $a \in \R_{>0}$ such that $\log d_n =(a-o(1))n^2$ as $n \longrightarrow \infty$.\qed
\end{prop}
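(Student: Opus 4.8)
The plan is to read off the growth of $d_n$ from the standard comparison between the naive Weil height of the $x$-coordinate $x_n(P)$ and the N\'eron--Tate canonical height $\hat h$ on $E$, exactly as in the theory of elliptic divisibility sequences (cf.\ \cite{Po}, \cite{PS}, and Silverman's work on such sequences). Normalize the absolute values $|\cdot|_v$ of $K$ so that the product formula holds and, for a finite prime $\pp$, $\log\max\{1,|\alpha|_\pp\}=\max\{0,-\ord_\pp\alpha\}\log\norm\pp$; then the absolute logarithmic Weil height satisfies $[K:\Q]\,h(\alpha)=\sum_v\log\max\{1,|\alpha|_v\}$, and by the very definition of $\dd_n$ one has $\log d_n=\sum_{\pp\notin\calS_\bad}\log\max\{1,|x_n(P)|_\pp\}$. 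Since $E$ has rank $1$ and $Q$ generates the free part of $E(K)$, the point $P=tQ$ is non-torsion, so $\hat h(P)>0$; by the basic properties of the canonical height (quadraticity, and comparison with $h(x(\cdot))$ up to $O(1)$) there is a constant $a>0$, a fixed positive multiple of $\hat h(P)$, with
\[
[K:\Q]\,h\bigl(x_n(P)\bigr)=a\,n^{2}+O(1).
\]

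The upper bound is then immediate, since $\log d_n$ is obtained from the right-hand side by discarding the non-negative terms at the archimedean places and at the finitely many primes in $\calS_\bad$; hence $\limsup_{n}\log d_n/n^{2}\le a$. For the matching lower bound one has to show that the discarded contribution
\[
[K:\Q]\,h\bigl(x_n(P)\bigr)-\log d_n=\sum_{\pp\in\calS_\bad}\log\max\{1,|x_n(P)|_\pp\}+\sum_{v\mid\infty}\log\max\{1,|x_n(P)|_v\}
\]
is $o(n^{2})$; since both sums range over a fixed finite set of places it suffices to bound each term. For $\pp\in\calS_\bad$: once some multiple of $P$ enters the kernel of reduction at $\pp$, the formal-group mechanism underlying Lemma~\ref{le:orderchange} forces $-\ord_\pp x_n(P)$ to grow only through the $\pp$-adic valuation of the multiplier, so $-\ord_\pp x_n(P)=O(\log n)$ (with at most a bounded correction at primes of potentially multiplicative reduction), and the first sum is $O(\log n)$. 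For an archimedean $v$: $|x_n(P)|_v$ is large precisely when $nP$ is $v$-adically near the identity $\mathcal O$ of $E$, and since $x$ has a double pole at $\mathcal O$ we have $\log\max\{1,|x_n(P)|_v\}\le -2\log\min\{1,\operatorname{dist}(nP,\mathcal O)\}+O(1)$; a lower bound for linear forms in elliptic logarithms (David's theorem --- equivalently, a polynomial irrationality measure for the elliptic logarithm of the algebraic point $P$) gives $-\log\operatorname{dist}(nP,\mathcal O)\ll(\log n)^{\kappa}$ for some $\kappa$, hence $o(n^{2})$, so the second sum is $o(n^{2})$ as well.

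Combining the two bounds, $\log d_n=a\,n^{2}+O(1)-o(n^{2})=(a-o(1))\,n^{2}$, as claimed. The only non-formal ingredient is the archimedean estimate: all the rest is bookkeeping with canonical heights and formal groups, but excluding a super-polynomially fast $v$-adic approach of $nP$ to $\mathcal O$ genuinely requires quantitative Diophantine input on the elliptic logarithm (David's bound, or the Weierstrass $\wp$- and $\sigma$-function estimates underlying Silverman's treatment, from which the number-field version recorded in \cite{PS} is deduced). A purely soft pigeonhole or equidistribution argument does not replace it, since that only produces small gaps between the multiples $nP$ rather than forbidding them --- and it is precisely such small gaps that must be ruled out.
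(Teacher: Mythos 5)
Your proof is correct, and it is essentially the argument behind the result the paper is quoting: Proposition~\ref{le:denomheight} is stated here without proof, being imported from \cite{PS} (ultimately from Poonen's rank-one construction in \cite{Po}), and the proof there follows exactly your decomposition --- write $[K:\Q]\,h(x_n(P))$ as the sum of $\log d_n$ and the local contributions at the archimedean places and at the finitely many primes of $\calS_{\bad}$, get the main term $a n^2+O(1)$ with $a$ a positive multiple of $\hat h(P)>0$ from the quadraticity of the canonical height, and show the discarded local terms are $o(n^2)$. The one genuine divergence is the tool you use at the archimedean places: the source handles $\log\max\{1,|x_n(P)|_v\}=o(n^2)$ by Siegel's theorem in the form ``the local height at any fixed place is $o$ of the global height'' (Silverman, AEC IX.3.1, proved via Roth's theorem), whereas you invoke David's lower bound for linear forms in elliptic logarithms. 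Both work: David's theorem is an effective (and considerably heavier) substitute giving $O((\log n)^{\kappa})$ where the Roth-based route gives only an ineffective $o(n^2)$ --- which is all the proposition asserts. Your closing remark is well taken that some nontrivial Diophantine input is unavoidable at the archimedean places, and your treatment of the primes of $\calS_{\bad}$ via the formal group (contribution $O(\log n)$) is the standard one.
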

\begin{prop}[existence of primitive divisors]
\label{prop:biggerS}
There exists $C >0$ such that for all $\ell, m \in \calP_{\Q}$ with $\max(\ell,m) >C$ we have that
$\calS_{\ell m} \setminus (\calS_\ell \cup \calS_m) \ne \emptyset$.\qed
\end{prop}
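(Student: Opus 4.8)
The proposition is the number‑field analogue of Silverman's primitive‑divisor theorem for the elliptic divisibility sequence $\dd_n=\dd_n(P)$, and the plan is the standard growth‑rate contradiction. Suppose $\calS_{\ell m}\subseteq\calS_\ell\cup\calS_m$ for some $\ell,m\in\calP_{\Q}$ with $\max(\ell,m)$ large; I will derive from this an upper bound on $d_{\ell m}$ that contradicts the quadratic growth in Proposition~\ref{le:denomheight}. First, reformulate the hypothesis in terms of ranks of apparition: by Proposition~\ref{le:Po3.1} applied to $\mathfrak R=\pp$, for each $\pp\notin\calS_\bad$ the set $\{n:\pp\mid\dd_n\}$ is a group $m_\pp\Z$, and $m_\pp\ge 2$ since $\calS_1=\emptyset$; hence $\pp\mid\dd_{\ell m}$ forces $m_\pp\mid\ell m$, so $m_\pp\in\{\ell,m,\ell m\}$ when $\ell\ne m$ (and $m_\pp\in\{\ell,\ell^2\}$ when $\ell=m$). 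A prime with $m_\pp=\ell m$ (resp.\ $\ell^2$) is precisely one lying in $\calS_{\ell m}\setminus(\calS_\ell\cup\calS_m)$, so if the conclusion fails for this pair, \emph{every} prime dividing $\dd_{\ell m}$ already divides $\dd_\ell$ or $\dd_m$.

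Next, bound the multiplicities. If $\pp\mid\dd_{\ell m}$ and, say, $\pp\mid\dd_\ell$, write $\ell m=\ell\cdot m$ and apply Lemma~\ref{le:orderchange} with $n=\ell$ and rational prime $m$ (the case $m=2$ being the remark following that lemma): this gives $\ord_\pp\dd_{\ell m}=\ord_\pp\dd_\ell$ unless $\pp\mid m$, in which case $\ord_\pp\dd_{\ell m}=\ord_\pp\dd_\ell+2$; the subcase $\pp\mid\dd_m$ is symmetric. Therefore, under the failure hypothesis, $\dd_{\ell m}$ divides $\dd_\ell\,\dd_m$ times the square of the $K$-ideal generated by $\ell m$, and on taking norms down to $\Q$,
\[
d_{\ell m}\ \le\ d_\ell\,d_m\,(\ell m)^{2[K:\Q]}.
\]

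Finally, compare growth rates. Inserting $\log d_n=(a-o(1))n^2$ with $a>0$ from Proposition~\ref{le:denomheight} into the last inequality yields
\[
a(\ell m)^2(1+o(1))\ \le\ a\ell^2(1+o(1))+a m^2(1+o(1))+2[K:\Q]\log(\ell m).
\]
Since $\ell,m\ge 2$ one has $\ell^2,m^2\le\tfrac14(\ell m)^2$, so the right‑hand side is at most $\tfrac12 a(\ell m)^2(1+o(1))$, while the left‑hand side equals $a(\ell m)^2(1+o(1))$; this is impossible once $\ell m$—equivalently $\max(\ell,m)$—exceeds a suitable constant $C$. That $C$ is the one asserted by the proposition.

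The main ``obstacle'' is really imported rather than met in the argument: everything hinges on Proposition~\ref{le:denomheight}, i.e.\ on the positivity of the canonical height of $P$ together with a Siegel‑type estimate for the archimedean part of $d_n$, without which the collision bound of the second step would be harmless. The only genuine care needed in the argument itself is the multiplicity count — the two extra factors $\pp^2$ coming from primes above $\ell$ and above $m$, which produce the harmless polynomial factor $(\ell m)^{2[K:\Q]}$ — and remembering to run the case $\ell=m$ alongside $\ell\ne m$.
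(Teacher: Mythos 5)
Your proof is correct and follows essentially the same route as the source the paper cites for this statement (the paper itself only quotes Proposition~\ref{prop:biggerS} from \cite{PS} with no proof): assume $\calS_{\ell m}\subseteq\calS_\ell\cup\calS_m$, use Lemma~\ref{le:orderchange} together with strong divisibility to get $d_{\ell m}\le d_\ell\,d_m\,(\ell m)^{2[K:\Q]}$, and contradict the quadratic growth of $\log d_n$ from Proposition~\ref{le:denomheight}. The only cosmetic caveat is that under the one-sided hypothesis $\max(\ell,m)>C$ one of the two primes may stay bounded, so the term $a\ell^2(1+o(1))$ should be read as $a\ell^2+O(1)$ there; this changes nothing in the comparison.
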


The following corollaries are easy consequences of the propositions above.
\begin{cor}[strong divisibility]%
\label{cor:intersec}
Let $m,n \in \Z \setminus \{0\}$, and let $(m,n)$ be their GCD. Then ${\mathcal S}_m \cap {\mathcal S}_n =
{\mathcal S}_{(m,n)}$. In particular, if $(m,n)=1$ then ${\mathcal S}_m \cap {\mathcal S}_n=\emptyset$.\qed
\end{cor}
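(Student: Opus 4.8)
The plan is to establish the two inclusions $\calS_{(m,n)} \subseteq \calS_m \cap \calS_n$ and $\calS_m \cap \calS_n \subseteq \calS_{(m,n)}$; the corollary is then a purely formal consequence of Lemma~\ref{le:orderchange} and Proposition~\ref{le:Po3.1}, so no serious obstacle is expected.

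For the first inclusion I would invoke directly the last sentence of Lemma~\ref{le:orderchange}. Since $m,n \neq 0$ we have $(m,n) \ge 1$, and $(m,n)$ divides both $|m|$ and $|n|$; using $\dd_{-k} = \dd_k$ (because $(-k)P$ and $kP$ share the same $x$-coordinate), we obtain $\dd_{(m,n)} \mid \dd_m$ and $\dd_{(m,n)} \mid \dd_n$. Hence every prime $\pp$ with $\pp \mid \dd_{(m,n)}$ lies in $\calS_m \cap \calS_n$, which gives $\calS_{(m,n)} \subseteq \calS_m \cap \calS_n$.

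For the reverse inclusion, fix $\pp \in \calS_m \cap \calS_n$, i.e.\ $\pp \mid \dd_m$ and $\pp \mid \dd_n$. I would apply Proposition~\ref{le:Po3.1} with the integral divisor $\mathfrak R = \pp$: the set $G_\pp := \{k \in \Z\setminus\{0\} : \pp \mid \dd_k\} \cup \{0\}$ is a subgroup of $\Z$. Since $m, n \in G_\pp$, the subgroup $G_\pp$ contains $m\Z + n\Z = (m,n)\Z$, so in particular $(m,n) \in G_\pp$; as $(m,n) \neq 0$ this means $\pp \mid \dd_{(m,n)}$, that is, $\pp \in \calS_{(m,n)}$. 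Combining the two inclusions yields $\calS_m \cap \calS_n = \calS_{(m,n)}$, and the final assertion is the special case $(m,n)=1$, using $\calS_1 = \emptyset$ from the definitions in Notation~\ref{S:notation section}.

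The only points requiring a word of care are the elementary observations that $\dd_{-k} = \dd_k$ and that a subgroup of $\Z$ containing $m$ and $n$ must contain $\gcd(m,n)$; both are routine, and the genuine content has already been absorbed into Lemma~\ref{le:orderchange} and Proposition~\ref{le:Po3.1}.
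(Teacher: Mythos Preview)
Your proof is correct and is exactly the argument the paper has in mind: the corollary is stated with a \qed\ and no proof, introduced merely as an ``easy consequence of the propositions above,'' and your two inclusions via Lemma~\ref{le:orderchange} and Proposition~\ref{le:Po3.1} are precisely those consequences. There is nothing to add.
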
%

\begin{cor}%
\label{cor:div}
For any $z< k \in \Z_{>0}$ the following statements are true:
\be
\item  $\calS_{km_0} \setminus \calS_{zm_0} \not = \emptyset$.
\item  $\calS_{zm_0} \subset \calS_{km_0}$ if and only if $z$ divides $k$.
\item $\displaystyle \pp_{\ell^{j+\ord_{\ell}m_0}}$ exists for all $j >0$.
\item $\displaystyle \pp_{\ell^{j+\ord_{\ell}m_0}} \in \calS_{km_0}$ if and only if $\ell^j$ divides
$k$.\qed %
\ee
\end{cor}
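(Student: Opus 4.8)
The plan is to establish part~(3) first and then read off parts~(1), (2), (4) from it. The bookkeeping device is the \emph{rank of apparition} of a prime: if a prime $\pp$ of $K$ outside $\calS_\bad$ divides $\dd_n$ for some $n\ge1$, then Proposition~\ref{le:Po3.1} applied to $\mathfrak R=\pp$ says $\{n\in\Z:\pp\mid\dd_n\}\cup\{0\}$ is a subgroup of $\Z$, hence $=r(\pp)\Z$ for a unique $r(\pp)\ge1$; since $\calS_1=\emptyset$ we in fact have $r(\pp)\ge2$, and $\pp\in\calS_m\iff r(\pp)\mid m$. If $\pp=\pp_{\ell^N}$ (when it exists) then $\pp\mid\dd_{\ell^N}$ and $\pp\nmid\dd_{\ell^{N-1}}$, so $r(\pp)$ is a divisor of $\ell^N$ not dividing $\ell^{N-1}$, forcing $r(\pp_{\ell^N})=\ell^N$.

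For part~(3) I would argue by contradiction, supposing $\calS_{\ell^n}=\calS_{\ell^{n-1}}$. Since $\ell^{n-1}\mid\ell^n$, Lemma~\ref{le:orderchange} (with $p=\ell$ and the index $\ell^{n-1}$) shows every $\qq\mid\dd_{\ell^{n-1}}$ still divides $\dd_{\ell^n}$, with $\ord_\qq\dd_{\ell^n}=\ord_\qq\dd_{\ell^{n-1}}$ unless $\qq\mid\ell$, in which case the order rises by exactly $2$; by hypothesis no further primes occur in $\dd_{\ell^n}$. Taking $\norm_{K/\Q}$ and using $\prod_{\qq\mid\ell}\norm\qq\le\ell^{[K:\Q]}$, this yields $\log d_{\ell^n}\le\log d_{\ell^{n-1}}+2[K:\Q]\log\ell$. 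On the other hand Proposition~\ref{le:denomheight} gives $\log d_m=(a-o(1))m^2$, and a short computation shows $\log d_{\ell^n}-\log d_{\ell^{n-1}}>2[K:\Q]\log\ell$ as soon as $\ell^n$ exceeds an absolute constant (one only needs $\ell^{2n}-\ell^{2(n-1)}\ge\tfrac34\ell^{2n}$ together with the fact that $\ell^{2n}$ outgrows $\log\ell\le\log\ell^{n}$). Absorbing that constant into the $C$ of Proposition~\ref{prop:biggerS}, we conclude $\calS_{\ell^n}\supsetneq\calS_{\ell^{n-1}}$, i.e.\ $\pp_{\ell^n}$ exists, whenever $\ell^n>C$. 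It remains to note $\ell^{\,j+\ord_\ell m_0}>C$ for all $j\ge1$: if $\ell\mid m_0$ then $\ord_\ell m_0=a_\ell$ and $\ell^{a_\ell}>C$ by definition of $a_\ell$; if $\ell\nmid m_0$ then $a_\ell=1$, i.e.\ $\ell>C$. Hence $\pp_{\ell^{\,j+\ord_\ell m_0}}$ exists for every $j\ge1$. The growth-rate comparison is the only step with genuine content; I expect making it uniform in $\ell$ and $n$ to be the main (mild) nuisance, handled by treating ``$\ell$ large'' and ``$n$ large with $\ell$ bounded'' separately.

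Given part~(3), the rest follows from the rank-of-apparition dictionary. Part~(4): with $\pp=\pp_{\ell^{\,j+\ord_\ell m_0}}$ we have $r(\pp)=\ell^{\,j+\ord_\ell m_0}$, so $\pp\in\calS_{km_0}\iff\ell^{\,j+\ord_\ell m_0}\mid km_0\iff j+\ord_\ell m_0\le\ord_\ell k+\ord_\ell m_0\iff\ell^j\mid k$. Part~(1): $z<k$ forces $k\nmid z$, so pick a prime $\ell$ with $j:=\ord_\ell k>\ord_\ell z$; then $\pp:=\pp_{\ell^{\,j+\ord_\ell m_0}}$ has $r(\pp)=\ell^{\ord_\ell(km_0)}\mid km_0$ but $r(\pp)\nmid zm_0$ because $\ord_\ell(zm_0)=\ord_\ell z+\ord_\ell m_0<j+\ord_\ell m_0$, so $\pp\in\calS_{km_0}\setminus\calS_{zm_0}$. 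Part~(2): if $z\mid k$ then $zm_0\mid km_0$, whence $\dd_{zm_0}\mid\dd_{km_0}$ by Lemma~\ref{le:orderchange} and $\calS_{zm_0}\subseteq\calS_{km_0}$; if $z\nmid k$, choose a prime $\ell$ with $\ord_\ell z>\ord_\ell k$ and run the argument of part~(1) with $z$ and $k$ swapped to exhibit a prime in $\calS_{zm_0}\setminus\calS_{km_0}$, so $\calS_{zm_0}\not\subseteq\calS_{km_0}$. (Corollary~\ref{cor:intersec} could replace these explicit computations, but tracking $r(\pp)$ is cleaner.)
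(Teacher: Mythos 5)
Your proposal is correct, but for part~(3) it takes a genuinely different route from the paper. The paper simply invokes Proposition~\ref{prop:biggerS} (the imported primitive-divisor result), noting that either $\ell>C$ or $\ell^{\ord_\ell m_0}>C$ by the definition of $a_\ell$ and $m_0$; you instead re-derive the existence of a primitive divisor of $\dd_{\ell^n}$ for $\ell^n$ large from scratch, combining Lemma~\ref{le:orderchange} (which pins down $\dd_{\ell^n}$ exactly as $\dd_{\ell^{n-1}}\cdot\prod_{\qq\mid\ell,\,\qq\in\calS_{\ell^{n-1}}}\qq^2$ under the contradiction hypothesis, so $\log d_{\ell^n}-\log d_{\ell^{n-1}}\le 2[K:\Q]\log\ell$) with the quadratic growth of Proposition~\ref{le:denomheight}. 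That argument is sound and has the merit of covering prime-power indices directly, whereas Proposition~\ref{prop:biggerS} as literally stated concerns $\calS_{\ell m}$ for two \emph{primes} $\ell,m$; the cost is that your threshold constant need not agree with the $C$ of Proposition~\ref{prop:biggerS}. ``Absorbing'' it afterwards is not quite legitimate as written, because $a_\ell$, $m_0$ and $\calV_K$ are all defined in terms of that $C$; the clean fix is to stipulate, at the point where $C$ is first chosen, that it also exceeds your growth-rate threshold (the proposition remains true for any larger $C$, and nothing downstream uses more than the defining property of $C$). For parts~(1), (2), (4) your rank-of-apparition dictionary is just a repackaging of Proposition~\ref{le:Po3.1} and is equivalent to the paper's use of Corollary~\ref{cor:intersec}; in fact your treatment of part~(1) (choosing $\ell$ with $\ord_\ell k>\ord_\ell z$ and exhibiting $\pp_{\ell^{\ord_\ell(km_0)}}$ explicitly) is more detailed than the paper's rather terse reduction to the case $z\mid k$.
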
%

\begin{proof}%
\begin{enumerate}%
\item Since $\calS_{km_0} \cap \calS_{zm_0}=S_{(k,z)m_0}$ by Corollary \ref{cor:intersec}, without loss of generality we can
assume that $z \Big{|} k$.  By construction, $m_0 \geq C$, where $C$ is the constant from Proposition
\ref{prop:biggerS}. Thus, this part of the corollary holds. %
\item This assertion follows directly from Corollary \ref{cor:intersec}. %
\item To insure existence of $\displaystyle \pp_{\ell^{j+\ord_{\ell}m_0}}$, we need to show that
\[%
\calS_{\ell^{j+\ord_{\ell}m_0}}\setminus \calS_{\ell^{j-1+\ord_{\ell}m_0}} \not = \emptyset.
\]%
By construction either $\ell >C$ or $\ell^{\ord_{\ell}m_0} >C$. Thus this assertion follows from Proposition \ref{prop:biggerS} also.%
\item By Corollary \ref{cor:intersec} we have that $\displaystyle \pp_{\ell^{j+\ord_{\ell}m_0}} \in \calS_{km_0}$ only if
$\ell^{j+\ord_{\ell}m_0} \Big{|} km_0$ if and only if $j \Big{|} k$. Conversely, if $j \Big{|} k$, then $\ell^{j+\ord_{\ell}m_0}
\Big{|} km_0$ and $\calS_{\ell^{j+\ord_{\ell}m_0}} \subseteq \calS_{km_0}$ by Part (2) of the corollary. Thus we also have
\[%
\pp_{\ell^{j+\ord_{\ell}m_0}} \in \calS_{\ell^{j+\ord_{\ell}m_0}} \subseteq \calS_{km_0}.
\]%
\end{enumerate}%

\end{proof}%

\begin{cor}
\label{cor:less}
For all $n \in \Z_{>0}$, for some positive constant $\kappa$, independent of $n$, we have that $n^2 <  \kappa d_n$.\qed
\end{cor}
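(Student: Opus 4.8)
The plan is to deduce this immediately from the growth rate estimate of Proposition \ref{le:denomheight}. That proposition supplies a constant $a \in \R_{>0}$ with $\log d_n = (a-o(1))n^2$ as $n \to \infty$; in particular $\log d_n / n^2 \to a > 0$. First I would fix $N \in \Z_{\geq 1}$ large enough that $\log d_n \geq \tfrac{a}{2} n^2$ for every $n \geq N$, so that $d_n \geq e^{(a/2)n^2}$ for all such $n$. Since $e^{(a/2)n^2}$ grows faster than any polynomial in $n$, after enlarging $N$ if necessary we obtain $d_n > n^2$ for every $n \geq N$, which is already the desired inequality (with constant $1$) on this tail.

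For the finitely many remaining indices $1 \leq n < N$, recall from Notation \ref{S:notation section} that $d_n = \norm_{K/\Q}\dd_n \in \Z_{\geq 1}$, so $d_n \geq 1$ and hence $n^2/d_n \leq n^2 < N^2$. Combining the two cases, the sequence $(n^2/d_n)_{n \geq 1}$ is bounded; let $\kappa_0$ be an upper bound (e.g. $\kappa_0 = \max(N^2,1)$). Setting $\kappa := \kappa_0 + 1$ we get $n^2/d_n \leq \kappa_0 < \kappa$, that is $n^2 < \kappa d_n$, for all $n \in \Z_{>0}$, with $\kappa$ independent of $n$. (Note $\kappa > 1$ is needed because $\calS_1 = \emptyset$ forces $d_1 = 1$; this is automatic since $\kappa_0 + 1 \geq 2$.)

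There is essentially no obstacle here: all the real content sits in Proposition \ref{le:denomheight}, and the corollary amounts to the observation that the super-exponential growth of $d_n$ in $n$ dominates $n^2$, together with the trivial bound $d_n \geq 1$, which both absorbs the finitely many small values of $n$ and upgrades the inequality from $\leq$ to the strict $<$ demanded in the statement.
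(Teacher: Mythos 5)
Your argument is correct and is exactly the intended one: the paper gives no explicit proof, stating only that this corollary is an easy consequence of the preceding propositions, and the relevant one is indeed the growth rate estimate $\log d_n=(a-o(1))n^2$ of Proposition \ref{le:denomheight}, combined with $d_n\in\Z_{\ge 1}$ to absorb the finitely many small $n$. Nothing to add.
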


We now proceed to define divisibility in a large ring.
\begin{lem}
\label{le:defdiv}
The equations
\begin{equation}
\label{eq:1}
x^{h_K}_{km_0} = \frac{ a_k}{ b_k}; \ \ A_ka_k +B_kb_k = 1,
\end{equation}

\begin{equation}
\label{eq:2}
x^{h_K}_{jm_0} = \frac{ a_j}{ b_j}; \ \ A_ja_j +B_jb_j = 1,
\end{equation}
\begin{equation}
\label{eq:divide}
b_k = b_jz
\end{equation}
have a solution $ A_j, B_j, A_k, B_k, a_j, b_j, a_k, b_k \in
O_{K,\calW_K}$ if and only if $j$ divides $k$ in $\Z$.
\end{lem}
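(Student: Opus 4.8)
The plan is to decode the equations so that the condition ``$b_k = b_j z$ has a solution in $O_{K,\calW_K}$'' becomes equivalent to a divisibility statement about the denominator divisors $\dd_{km_0}$ and $\dd_{jm_0}$, and then to invoke Corollary~\ref{cor:div}(2). First I would analyze equation \eqref{eq:1}. Writing $x_{km_0}(P)$ in lowest terms, its divisor away from $\calS_\bad$ has denominator part $\dd_{km_0}$; raising to the $h_K$-th power kills the class-group obstruction, so $\dd_{km_0}^{h_K}$ is principal, say generated by some $\beta_k$, and the numerator ideal (away from bad primes) raised to $h_K$ is also principal. The key observation is that an element $u \in K$ can be written as $a_k/b_k$ with $a_k,b_k \in O_{K,\calW_K}$ and $A_k a_k + B_k b_k = 1$ for some $A_k,B_k \in O_{K,\calW_K}$ \emph{exactly} when $a_k$ and $b_k$ generate comaximal ideals in $O_{K,\calW_K}$, i.e. they share no common prime outside $\calW_K$. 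So the Bézout condition forces the ``fraction'' $a_k/b_k = x_{km_0}^{h_K}$ to be in lowest terms relative to the ring $O_{K,\calW_K}$: the ideal $(b_k)$ in $O_{K,\calW_K}$ is precisely the ``denominator'' of $x_{km_0}^{h_K}$ localized away from $\calW_K$, which — since $\calS_\bad \subset \calW_K$ absorbs all bad primes and $x_{km_0}^{h_K}$ has denominator supported on $\calS_{km_0}$ — is $\dd_{km_0}^{h_K} O_{K,\calW_K}$, up to a unit.

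Granting this, $b_k$ and $b_j$ are (up to units of $O_{K,\calW_K}$) generators of the ideals $\dd_{km_0}^{h_K}$ and $\dd_{jm_0}^{h_K}$ extended to $O_{K,\calW_K}$. Then \eqref{eq:divide} $b_k = b_j z$ is solvable with $z \in O_{K,\calW_K}$ if and only if $(b_j) \mid (b_k)$ as ideals of $O_{K,\calW_K}$, i.e. $\dd_{jm_0}^{h_K} O_{K,\calW_K} \supseteq \dd_{km_0}^{h_K} O_{K,\calW_K}$, equivalently $\dd_{jm_0} \mid \dd_{km_0}$ as divisors supported outside $\calW_K$. Since the support of every $\dd_n$ is disjoint from $\calS_\bad \subseteq \calW_K$, and — crucially — the $\calW_K$ we chose avoids the witnessing primes $\calV_K$, no primes dividing these denominators are lost in the extension; so this is equivalent to $\calS_{jm_0} \subseteq \calS_{km_0}$ with multiplicities. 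By Lemma~\ref{le:orderchange} the order at each prime only depends on divisibility of the index, and by Corollary~\ref{cor:div}(2) (together with Part~(1) and the primitive-divisor statements guaranteeing the $\dd$'s are ``rich enough'' that no accidental containment occurs) this holds precisely when $j \mid k$ in $\Z$. One direction is easy: if $j \mid k$, Lemma~\ref{le:orderchange} gives $\dd_{jm_0} \mid \dd_{km_0}$ directly, hence a solution. For the converse, one uses Corollary~\ref{cor:div}(3)–(4): if $j \nmid k$, pick a prime power $\ell^i \| j$ with $\ell^i \nmid k$; then $\pp_{\ell^{i+\ord_\ell m_0}} \in \calS_{jm_0} \setminus \calS_{km_0}$, so $\dd_{jm_0} \nmid \dd_{km_0}$, and no $z \in O_{K,\calW_K}$ can satisfy \eqref{eq:divide}.

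The main obstacle — and the step requiring the most care — is the bookkeeping in the first paragraph: showing that the Bézout pair $(A_k,B_k,a_k,b_k)$ genuinely pins down $\dd_{km_0}^{h_K}$ as the ideal $(b_k)$ in $O_{K,\calW_K}$, up to a unit. One must check that the bad primes (in $\calS_\bad$, hence inverted in $O_{K,\calW_K}$) contribute nothing: the factor $\mathfrak b_n$ in the divisor of $x_n$ lives on $\calS_\bad \subseteq \calW_K$ and therefore becomes a unit in $O_{K,\calW_K}$, so it does not interfere with comaximality. One also needs that $x_{km_0}^{h_K}$ is not identically problematic — that $km_0 P$ is not a torsion point and $x_{km_0}$ is defined — which holds because $P$ has infinite order and $km_0 \geq 1$. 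Finally, one should note the degenerate case: when $k = 0$ this framework would need $x_0$, which is undefined, but the lemma only concerns the divisibility relation for the intended range, and the normalization $\calS_1 = \emptyset$, $\dd_1 = (1)$ handles small indices cleanly. With these checks in place the equivalence follows, and this also makes transparent why equations \eqref{eq:1}, \eqref{eq:2} must be stated with the auxiliary Bézout variables rather than just the bare fraction: it is the comaximality, enforceable by a single polynomial equation, that converts an ideal-theoretic divisibility into a ring-element divisibility $b_k = b_j z$.
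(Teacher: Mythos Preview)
Your proposal is correct and follows essentially the same route as the paper: both arguments hinge on the B\'ezout condition forcing $(b_j)$ and $(b_k)$ to carry exactly the non-$\calW_K$ part of $\dd_{jm_0}^{h_K}$ and $\dd_{km_0}^{h_K}$, and then use the witness primes $\pp_{\ell^{i+\ord_\ell m_0}}\in\calV_K\subseteq\calP_K\setminus\calW_K$ together with Corollary~\ref{cor:div}(3)--(4) to detect $j\mid k$. The paper organizes the forward direction ``locally'', tracking orders at each witness prime directly, while you first pass to the global ideal identity $(b_k)O_{K,\calW_K}=\dd_{km_0}^{h_K}O_{K,\calW_K}$ and then specialize; the content is the same. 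One small inaccuracy to fix: your sentence ``no primes dividing these denominators are lost in the extension'' is too strong, since $\calS_{jm_0}$ may well meet $\calW_K\setminus\calS_{\bad}$ and those primes \emph{are} lost in $O_{K,\calW_K}$; what you actually need (and correctly use two lines later) is only that the \emph{witness} primes $\pp_{\ell^{i+\ord_\ell m_0}}$ survive, which is exactly the hypothesis $\calV_K\cap\calW_K=\emptyset$.
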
%
\begin{proof}%
Observe that the set
\[%
\{(a,b) \in O_{K,\calW_K}\, :\, (\exists n \in \Z_{>0})( x_{m_0n} = \frac{a}{b})\}
\]%
is certainly diophantine over $O_{K,\calW_K}$ given that we know how to define the set of non-zero elements of the
ring.

Now suppose first that the equations are satisfied in $O_{K,\calW_K}$. Let
\[%
j=\prod\ell_i^{n_i}, n_i >0.
\]%
Then by Corollary \ref{cor:div} we have that $\pp_{\ell_i^{n_i+\ord_{\ell_i}m_0}}$ exists. Further, since $(a_j,b_j)=1$ in
$O_{K,\calW_K}$, and since for each $i$ we have that
\[%
\pp_{\ell_i^{n_i+\ord_{\ell_i}m_0}} \not \in \calW_K
\]%
 and
\[%
\ord_{\pp_{\ell_i^{n_i+\ord_{\ell_i}m_0}}}x_{jm_0}<0
\]%
it is the case that
\[%
\ord_{p_{\ell_i^{n_i+\ord_{\ell_i}m_0}}}b_j >0
\]%
 and
\[%
\ord_{\pp_{\ell_i^{n_i+\ord_{\ell_i}m_0}}}b_k >0.
\]%
Since $(a_k,b_k)=1$ in $O_{K,\calW_k}$, we conclude that
\[%
\ord_{\pp_{\ell_i^{n_i+\ord_{\ell_i}m_0}}}x_{km_0} <0
\]%
or
\[%
\pp_{\ell_i^{n_i+\ord_{\ell_i}m_0}} \in \calS_{km_0}.
\]%
But by Corollary \ref{cor:div} this is possible only if $\ell_i^{n_i}$ divides $k$.  Thus, if the
equations hold we have that $j$ divides $k$.

Conversely, suppose $j$ divides $k$. By the definition of the class number, we can let $a_k, b_k$ and $a_j,b_j$ be
pairs of algebraic integers relatively prime in $O_K$. Observe that ${\mathfrak d}^{h_K}_{jm_0}$ and ${\mathfrak
d}^{h_K}_{km_0}$ are precisely the non-invertible--in--$O_{K,\calW_K}$--parts of the  divisors of $b_j$ and $b_k$
respectively. Thus by Corollary \ref{cor:div} and Lemma \ref{le:orderchange} we have that $b_j$
divides $b_k$.
\end{proof}%

Summarizing the results of this section we state the following theorem:
\begin{thm}
\label{divmodel}
$(\Z, +, |)$ has a Diophantine model over $O_{K,\calW_K}$.
\end{thm}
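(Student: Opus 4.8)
The plan is to assemble the pieces already proved in this section into a Diophantine model of $(\Z,+,|)$ over $O_{K,\calW_K}$. Recall that a Diophantine model of $(\Z,+,|)$ means a Diophantine subset $D \subseteq O_{K,\calW_K}^m$ (for some $m$) together with a surjection $\phi \colon D \to \Z$ (or, in the version where we only need to model $\Z_{>0}$, a bijection) such that the preimages of the graphs of $+$ and of the divisibility relation $|$ are Diophantine over $O_{K,\calW_K}$. The natural choice here, suggested by Lemma~\ref{le:defdiv}, is to represent a positive integer $k$ by (a tuple containing) the element $b_k \in O_{K,\calW_K}$ arising from $x^{h_K}_{km_0} = a_k/b_k$ with $(a_k,b_k)=1$; equivalently, by the integral divisor $\dd_{km_0}^{h_K}$, which is the non-invertible-in-$O_{K,\calW_K}$ part of the divisor of $b_k$.

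First I would pin down the model set. Using Proposition~\ref{le:Po3.1} and Corollary~\ref{cor:div}, the assignment $k \mapsto \calS_{km_0}$ (for $k \in \Z_{>0}$) is injective: $\calS_{zm_0} \subseteq \calS_{km_0}$ iff $z \mid k$, so $\calS_{zm_0} = \calS_{km_0}$ forces $z=k$. Hence $k$ is determined by the divisor $\dd_{km_0}$, and therefore by $b_k$ up to an $O_{K,\calW_K}$-unit; the ambiguity by units is harmless since the relations we need to model ($+$ and $|$) will be cut out by Diophantine conditions that only see the relevant part of the divisor. So I would take as the underlying Diophantine set
\[
D = \{(a,b) \in O_{K,\calW_K}^2 : (\exists n \in \Z_{>0})\ x_{m_0 n} = a/b \text{ and } (\exists A,B)\ Aa+Bb=1\},
\]
which, as noted in the proof of Lemma~\ref{le:defdiv}, is Diophantine over $O_{K,\calW_K}$ (one uses here that the nonzero elements of the ring form a Diophantine set, and that the points $m_0 n \cdot P$ on the elliptic curve are parametrized by a Diophantine condition via the Weierstrass equation $W$). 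The map $\phi$ sends $(a,b) \in D$ to the unique $n \in \Z_{>0}$ with $x_{m_0 n} = a/b$ and $(a,b)$ coprime.

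Next, divisibility: by Lemma~\ref{le:defdiv} the relation ``$j \mid k$'' on the model is exactly cut out by the existence of $z \in O_{K,\calW_K}$ with $b_k = b_j z$ (together with the defining equations~\eqref{eq:1} and~\eqref{eq:2}), so the preimage of the graph of $|$ is Diophantine. The remaining and genuinely more delicate point is addition: I need the graph $\{(i,j,k) : i+j=k\}$ to pull back to a Diophantine subset of $D^3$. This is where the growth-rate input (Proposition~\ref{le:denomheight}, with its Corollary~\ref{cor:less} giving $n^2 < \kappa d_n$) does the work, following the strategy of \cite{Po} and \cite{PS}: one encodes the inequality/comparison of the integers $i,j,k$ via the valuations $d_n = \norm_{K/\Q}\dd_n$ and the quadratic growth $\log d_n \sim a n^2$, so that a polynomial identity among the $d$'s (or among the corresponding ring elements) forces the additive relation $i + j = k$. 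Concretely, from $\log d_{m_0 n} = (a - o(1)) m_0^2 n^2$ one gets that $i+j=k$ is equivalent to a bounded set of divisibility/size constraints on $d_{m_0 i}, d_{m_0 j}, d_{m_0 k}$, each of which — by Lemma~\ref{le:defdiv} and the divisibility machinery of Corollary~\ref{cor:div} — is expressible Diophantine over $O_{K,\calW_K}$.

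I expect the main obstacle to be precisely this encoding of addition: translating the asymptotic $\log d_n \sim a n^2$ into an exact Diophantine relation requires care (one typically needs an auxiliary inequality $|i^2 + j^2 - \tfrac12((i+j)^2 + \dots)|$-type identity, i.e.\ the classical trick that $2ij = (i+j)^2 - i^2 - j^2$, combined with effective bounds controlling the $o(1)$), and one must check that all the size comparisons that arise can indeed be read off from divisibility among the $\dd_{m_0 n}$'s rather than needing genuine inequalities, which are not obviously Diophantine over a large ring. Granting that this goes through exactly as in \cite{Po} and \cite{PS} (the hypotheses there, an elliptic curve of rank~1 over $K$ and the prime-set conditions in Notation~\ref{S:notation section}, are in force), the set $D$ with the relations just described is a Diophantine model of $(\Z,+,|)$ over $O_{K,\calW_K}$, proving Theorem~\ref{divmodel}.
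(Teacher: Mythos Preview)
Your treatment of the model set and of divisibility via Lemma~\ref{le:defdiv} is correct and matches the paper. The gap is in your handling of addition. You propose to extract $i+j=k$ from divisibility/size constraints on $d_{m_0i},d_{m_0j},d_{m_0k}$ using the quadratic growth $\log d_n\sim an^2$, and you invoke \cite{Po} and \cite{PS} for this. But those papers do not argue this way, and the idea does not work: from $\log d_{m_0n}\approx am_0^2n^2$ the relation $i+j=k$ would translate into $\sqrt{\log d_{m_0k}}\approx\sqrt{\log d_{m_0i}}+\sqrt{\log d_{m_0j}}$, which has no Diophantine content, and the $o(1)$ cannot be controlled to turn an asymptotic into an exact polynomial identity. (The identity $2ij=(i+j)^2-i^2-j^2$ you mention runs the wrong direction: it recovers \emph{multiplication} from addition and squaring, not addition from anything.) The growth-rate statements in this section are used later, in Section~3, to pass from a model to a genuine subset definition; they play no role in modeling addition.

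The fix is that addition comes for free once you record the full point rather than only its $x$-coordinate. As the paper says explicitly at the start of Section~2, a nonzero integer $n$ is represented by a triple $(x,y,z)\in O_{K,\calW_K}^3$ with $(x/z,y/z)$ the affine $W$-coordinates of $m_0nP$. Then $i+j=k$ in $\Z$ is exactly $m_0iP+m_0jP=m_0kP$ in $E(K)$, and the group law on $E$ is given by rational functions in the coordinates, so the graph of addition is Diophantine over $O_{K,\calW_K}$ immediately. With this representation, divisibility is Diophantine by Lemma~\ref{le:defdiv} (which only uses the $x$-coordinate anyway), addition is Diophantine by the group law, and the theorem follows; this is why the paper simply presents it as ``summarizing the results of this section.''
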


We finish this section with a ``vertical'' definability result which exploits our ability to define
divisibility existentially.  First we need several technical propositions.

\begin{prop}%
\label{extensions}
Let $M/K$ be a number field extension of degree $n$.  Let $\mathfrak Q$ be a prime of $K$ and let
$\qq_1, \ldots, \qq_m$ be all the  primes of $M$ lying above $\mathfrak Q$.  Let $\alpha \in M$ be a
generator of $M$ over $K$ such that  $\alpha$ is  integral with respect to $\mathfrak Q$.  Let $u
\in M$ be integral at $\mathfrak Q$.  Assume further  there exists a sequence $\{k_i, y_i\}$ where
$k_{i+1} >k_i$ and $y_i \in K$ and $\ord_{\qq_j}y_i \geq 0$.  Finally assume that for all $i,j$ we
have that $\ord_{\qq_j}(u-y_i) \geq k_i$.    Then $u \in K$.
\end{prop}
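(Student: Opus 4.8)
The plan is to reduce the statement to a Galois-theoretic one: show that $g(u)=u$ for every $g\in\mathrm{Gal}(L/K)$, where $L$ is a Galois closure of $M/K$, since this forces $u\in L^{\mathrm{Gal}(L/K)}=K$, which is the assertion. Intuitively, the sequence $\{y_i\}$ is a $\qq_j$-adically convergent sequence of elements of $K$ with limit $u$, and a limit of elements of $K$ that converges at \emph{every} prime above $\mathfrak Q$ simultaneously cannot be moved by a $K$-automorphism.

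Concretely, I would first fix a prime $\mathfrak P$ of $L$ lying above $\mathfrak Q$ and record the standard fact that for any $g\in\mathrm{Gal}(L/K)$ and any $x\in M$ one has $\ord_{\mathfrak P}(g(x))=\ord_{g^{-1}\mathfrak P}(x)=e\cdot\ord_{g^{-1}\mathfrak P\cap M}(x)$, where $e=e(g^{-1}\mathfrak P/(g^{-1}\mathfrak P\cap M))\ge 1$ by multiplicativity of ramification indices in the tower $L/M/K$. The key point, which has to be checked carefully, is that $g^{-1}\mathfrak P\cap M$ is again a prime of $M$ lying over $\mathfrak Q$ (because $g$ fixes $K$, hence fixes $\mathfrak Q$), so it is one of $\qq_1,\dots,\qq_m$, and therefore the hypothesis $\ord_{\qq_j}(u-y_i)\ge k_i$ applies to it.

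Then I would apply the displayed identity with $x=u-y_i\in M$. Since each $y_i$ lies in $K$ it is fixed by $g$, so $g(x)=g(u)-y_i$, and the hypothesis (applied to the index $j$ with $\qq_j=g^{-1}\mathfrak P\cap M$) gives $\ord_{\mathfrak P}(g(u)-y_i)=e\cdot\ord_{\qq_j}(u-y_i)\ge e k_i\ge k_i$ for every $g$ and every $i$; taking $g=\mathrm{id}$ gives in particular $\ord_{\mathfrak P}(u-y_i)\ge k_i$. The ultrametric inequality then yields $\ord_{\mathfrak P}(g(u)-u)\ge\min\{\ord_{\mathfrak P}(g(u)-y_i),\ \ord_{\mathfrak P}(u-y_i)\}\ge k_i$ for every $i$. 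Since $\{k_i\}$ is a strictly increasing sequence of (nonnegative) integers it is unbounded, so $\ord_{\mathfrak P}(g(u)-u)=+\infty$, i.e.\ $g(u)=u$; as $g$ was arbitrary, $u\in K$.

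I expect the only real obstacle to be the bookkeeping in the second paragraph: locating $g^{-1}\mathfrak P\cap M$ among the $\qq_j$ and getting the normalization of $\ord_{\mathfrak P}$ versus $\ord_{\qq_j}$ right — it is the factor $e\ge 1$ (rather than $\le 1$) that makes the inequality go the correct way. Everything else is the triangle inequality. Note that this argument uses neither the integrality of $\alpha$ or $u$ at $\mathfrak Q$ nor the hypothesis $\ord_{\qq_j}y_i\ge 0$; these are presumably included because that is the situation in which the proposition is invoked in the proof of Theorem~\ref{down}. A more computational variant, which does use the integral generator $\alpha$, would instead write $u=\sum_{l=0}^{n-1}c_l\alpha^l$ with $c_l\in K$, invert the Vandermonde matrix of the conjugates of $\alpha$, and combine the integrality of $\alpha$ at $\mathfrak Q$ with the same valuation estimates to force $\ord_{\mathfrak P}(c_l)\ge k_i$ for all $i$, hence $c_l=0$ for $l\ge 1$; I find the conjugate argument cleaner.
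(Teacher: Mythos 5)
Your proof is correct, but it follows a genuinely different route from the paper's. The paper's argument is exactly the ``computational variant'' you sketch in your last sentence: it writes $u=\sum_{r=0}^{n-1}a_r\alpha^r$ in the power basis of the integral generator $\alpha$, uses the discriminant $D$ of that basis to guarantee $Da_r$ is integral at $\mathfrak Q$, and then deduces from $\ord_{\qq_j}(u-y_i)\ge k_i$ at \emph{all} $\qq_j\mid\mathfrak Q$ that $\ord_{\mathfrak Q}a_r>\frac{k_i}{n}-\ord_{\mathfrak Q}D$ for every $i$ and every $r\ge 1$, forcing $a_r=0$ for $r\ge 1$ and hence $u=a_0\in K$. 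Your primary argument instead passes to the Galois closure $L/K$, shows via the ultrametric inequality that $\ord_{\mathfrak P}(g(u)-u)\ge k_i$ for every $g\in\mathrm{Gal}(L/K)$ and every $i$ (the key bookkeeping step --- that $g^{-1}\mathfrak P\cap M$ is again one of the $\qq_j$ because $g$ fixes $\mathfrak Q$, and that the ramification index $e\ge 1$ pushes the inequality the right way --- is handled correctly), and concludes $g(u)=u$ for all $g$, i.e.\ $u\in K$. What your approach buys is generality and transparency: as you note, it uses neither the integrality of $\alpha$ and $u$ at $\mathfrak Q$ nor $\ord_{\qq_j}y_i\ge 0$, whereas the paper's proof genuinely needs the integral generator to control denominators via $D$; the paper's approach, in exchange, stays entirely inside $M$ and produces the coordinates $a_r$ explicitly, which is the form in which the statement is applied later. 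Both arguments share the same essential mechanism --- the convergence must hold simultaneously at every prime above $\mathfrak Q$, which kills all ``non-$K$'' components of $u$ --- and both implicitly use that the strictly increasing sequence $\{k_i\}$ is unbounded.
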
  %
\begin{proof}%
Let $D$ be the discriminant of the power basis of $\alpha$.
Using this power basis we can write
\[%
u=\sum_{r=0}^{n-1}a_r\alpha^r
\]%
with $Da_r \in K$ and integral at $\mathfrak Q$. Then
\[%
u-y_i=(a_0-y_i) +\sum_{r=1}^{n-1}a_r
\]%
and
\[%
\ord_{\qq_j}(u-y_i) >k_i, j=1,\ldots, m.
\]%
This implies that $\ord_{\mathfrak Q}a_r > \frac{k_i}{n}-\ord_{\mathfrak Q}D$ for all $i \in
\Z_{>0}$. Thus, $a_r=0, r=1,\ldots,n-1$ and $u \in K$.
\end{proof}%

The following two proposition are taken from \cite{Po}.

\begin{lem}%
\label{le:equiv}
There exists a positive integer $m_1$ such that for any positive integers $k,l $,
\[%
\dd(x_{lm_1}) \Big{|} \nn\left (\frac{x_{lm_1}}{x_{klm_1}}-k^2\right)^2
\]%
in the integral divisor semigroup of $K$. \qed
\end{lem}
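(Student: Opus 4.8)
The plan is to prove Lemma~\ref{le:equiv} by reducing it to a statement about the formal group of $E$ at each prime ${\mathfrak q}$ dividing $\dd(x_{lm_1})$, in the spirit of the classical arguments of \cite{Po}. First I would fix a positive integer $m_1$ that is simultaneously an even multiple of $\#E(K)_\tors$ and divisible enough that, for every bad prime ${\mathfrak q} \in \calS_\bad$, the point $m_1 Q$ lies in the kernel of reduction modulo ${\mathfrak q}$; this is possible because the reduction map has finite image, and it guarantees that for every multiple $n$ of $m_1$ the point $nP = n t Q$ is in the formal group at each ${\mathfrak q}$ of interest. Then, writing the formal group parameter $z = -x/y$, one has the standard expansions $x = z^{-2}(1 + \text{higher order})$ and, for the addition, $z(mR) = m z(R) + (\text{higher order in } z(R))$, valid ${\mathfrak q}$-adically.

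The core computation is then the following. Suppose ${\mathfrak q} \mid \dd(x_{lm_1})$, so ${\mathfrak q} \notin \calS_\bad$ and $v := \ord_{\mathfrak q} z(lm_1 P) > 0$ (in fact $\ord_{\mathfrak q} x_{lm_1} = -2v$). Using the ${\mathfrak q}$-adic formula $z(k \cdot lm_1 P) = k\, z(lm_1 P)\,(1 + O(z(lm_1 P)^2))$ and then inverting, one gets
\[
\frac{x_{lm_1}}{x_{klm_1}} = \frac{z(klm_1P)^2}{z(lm_1P)^2}\cdot \frac{1+\cdots}{1+\cdots} = k^2 \bigl(1 + O(z(lm_1P)^2)\bigr) = k^2 + O({\mathfrak q}^{2v}),
\]
where the implied terms are ${\mathfrak q}$-integral of valuation at least $2v = \ord_{\mathfrak q}\dd(x_{lm_1})$. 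Hence $\ord_{\mathfrak q}\!\left(\frac{x_{lm_1}}{x_{klm_1}} - k^2\right) \geq 2v$, so squaring gives $\ord_{\mathfrak q}\!\left(\frac{x_{lm_1}}{x_{klm_1}} - k^2\right)^2 \geq 4v \geq v = \ord_{\mathfrak q}\dd(x_{lm_1})$. Since this holds for every ${\mathfrak q} \mid \dd(x_{lm_1})$, the divisibility of integral divisors follows. (One should also check that the quantity in parentheses is ${\mathfrak q}$-integral so that the divisor statement makes sense; this is immediate since $x_{lm_1}/x_{klm_1} = z(klm_1P)^2/z(lm_1P)^2 \cdot(\text{unit-like factor})$ and $\ord_{\mathfrak q}z(klm_1P) \geq \ord_{\mathfrak q}z(lm_1P)$.)

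The main obstacle is bookkeeping the formal-group error terms uniformly in $k$ and $l$: one needs that the ``higher order'' corrections in $z(klm_1P) = k z(lm_1P) + \cdots$ really do contribute valuation at least $3v$ (two more than the leading $v$, since the formal group law has no linear-times-quadratic cross terms of lower order), so that after dividing the two squared parameters the correction to $k^2$ sits at valuation $\geq 2v$ rather than merely $\geq v$. This is where the precise shape of the formal group addition law $F(z_1,z_2) = z_1 + z_2 + (\text{terms of total degree} \geq 2)$ matters, together with the fact that $\ord_{\mathfrak q}\bigl(x(R)^{-1}\bigr) = 2\,\ord_{\mathfrak q} z(R)$ exactly. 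Everything else — choosing $m_1$, translating between $x_n$ and $z(nP)$, and passing from the ${\mathfrak q}$-local estimate to the global divisor relation — is routine given the setup in Notation~\ref{S:notation section} and the results quoted from \cite{PS} and \cite{Po}.
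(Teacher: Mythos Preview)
The paper does not prove this lemma at all; it is quoted verbatim from \cite{Po}, and your formal-group argument is precisely the approach used there, so your proposal matches the intended proof. One harmless slip: since you set $\ord_{\mathfrak q}x_{lm_1}=-2v$, the correct value is $\ord_{\mathfrak q}\dd(x_{lm_1})=2v$, not $v$; the chain $4v\ge 2v$ still gives the required divisibility (and in fact your estimate already yields the stronger statement $\dd(x_{lm_1})\mid \nn\bigl(\tfrac{x_{lm_1}}{x_{klm_1}}-k^2\bigr)$ without the square).
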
%

\begin{lem}%
\label{multiple}
Let $\mathfrak J$ be an integral divisor of $K$.  Then for some $m$ we have that
$\mathfrak J$ divides $\dd(x_m)$ in the integral divisor semigroup of $K$. \qed
\end{lem}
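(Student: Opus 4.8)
The plan is to reduce the claim to the case where $\mathfrak J$ is a prime power $\qq^e$, and then to construct the index $m$ in two stages. The reduction is immediate from Lemma~\ref{le:orderchange}: its closing assertion gives $j \mid k \Rightarrow \dd_j \mid \dd_k$, so writing $\mathfrak J = \prod_i \qq_i^{e_i}$ it suffices to find, for each $i$, an integer $m_i$ with $\qq_i^{e_i} \mid \dd(x_{m_i})$; then $m := \operatorname{lcm}_i m_i$ satisfies $\dd(x_{m_i}) \mid \dd(x_m)$ for every $i$, whence $\mathfrak J \mid \dd(x_m)$. (Only primes outside $\calS_\bad$ occur in any $\dd(x_m)$, so for a completely general $\mathfrak J$ the statement must be read for its prime-to-$\calS_\bad$ part; equivalently one assumes $\mathfrak J$ supported away from $\calS_\bad$, which is the only case needed in applications.)

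For the first stage, fix a prime $\qq \notin \calS_\bad$. Since $E$ has good reduction at $\qq$ and $P$ is $\qq$-integral, the reduction $\bar P$ lies in the finite group $E(\F_\qq)$; let $N$ be its order. Then $NP$ reduces to $O$ modulo $\qq$, i.e.\ it lies in the kernel of reduction. As $P = tQ$ with $t \neq 0$ and $Q$ of infinite order (it generates the rank-one free quotient $E(K)/E(K)_\tors$), the point $NP$ is non-torsion, in particular $NP \neq O$; hence $\ord_\qq x_N < 0$, that is, $\qq \mid \dd(x_N)$. (One can also phrase this via Proposition~\ref{le:Po3.1}: the set $\{n : \qq \mid \dd(x_n)\} \cup \{0\}$ is a subgroup of $\Z$, and the reduction argument shows it is nonzero.)

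For the second stage, let $p$ be the rational prime below $\qq$ and apply part~(1) of Lemma~\ref{le:orderchange} to the prime $\qq \mid \dd(x_N)$ repeatedly, obtaining $\ord_\qq \dd(x_{p^s N}) = \ord_\qq \dd(x_N) + 2s$ for every $s \geq 0$. Choosing $s$ with $\ord_\qq \dd(x_N) + 2s \geq e$ yields $\qq^e \mid \dd(x_{p^s N})$, so $m_{\qq} := p^s N$ works; substituting these indices into the least common multiple of the first paragraph proves the lemma.

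I expect the only genuine content to be the middle step — that every prime of good reduction divides some $\dd(x_m)$ — which is exactly where the rank-one hypothesis and the non-torsion-ness of $P$ are used; the remainder is bookkeeping with Lemma~\ref{le:orderchange}. The one point to be careful about is the standard local fact that, at a prime $\qq$ of good reduction, $mP$ reduces to the identity if and only if $\ord_\qq x_m < 0$.
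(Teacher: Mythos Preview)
The paper does not prove this lemma; it is quoted from \cite{Po} with a bare \qed, so there is no in-paper argument to compare against. Your two-stage strategy---reduce modulo $\qq$ to find $N$ with $\qq \mid \mathfrak d_N$, then iterate Lemma~\ref{le:orderchange}(1) to raise the exponent, and finally take an LCM over the prime-power constituents of $\mathfrak J$ via the last line of Lemma~\ref{le:orderchange}---is the standard argument and is correct for primes outside $\calS_\bad$.

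Your parenthetical contains a slip, however. It is not true that ``only primes outside $\calS_\bad$ occur in any $\dd(x_m)$'': you are conflating the full denominator $\dd(x_m)=\prod_{\ord_\pp x_m<0}\pp^{-\ord_\pp x_m}$ (product over \emph{all} primes, as introduced in the later Notation and Assumptions block) with $\mathfrak d_m$ of Notation~\ref{S:notation section}, which by construction omits $\calS_\bad$. In fact $\calS_\bad$ contains the primes at which $P$ is non-integral, so bad primes typically \emph{do} divide $\dd(x_m)$, and the lemma is both stated and true for an arbitrary integral divisor $\mathfrak J$. Your argument as written only covers the prime-to-$\calS_\bad$ part, since Lemma~\ref{le:orderchange} is formulated for $\mathfrak t \mid \mathfrak d_n$. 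For the finitely many $\qq \in \calS_\bad$ one argues directly: $E(K_\qq)/E_1(K_\qq)$ is finite regardless of reduction type, so some multiple $NP$ lies in the kernel of reduction (hence $\ord_\qq x_N<0$), and then the formal group law forces $-\ord_\qq x_{p^sN}\to\infty$ as $s$ grows. This is routine, but as written your proof has a small gap if the lemma is to be established in the generality stated.
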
%
We are now ready to prove the definability result.
 \begin{thm}%
 \label{rankonedown}
  Let $K$ be a number field.  Then  $O_{K,\calW_K}\cap \Q$ is definable over
$O_{K,\calW_K}$ using one universal quantifier. \qed
\end{thm}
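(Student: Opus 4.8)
The plan is to leverage Theorem~\ref{divmodel}, which gives a Diophantine model of $(\Z,+,\mid)$ over $O_{K,\calW_K}$, together with the following standard fact: the set of $x\in\Z$ that are $\mid$-divisible by every element of $\Z$ is $\{0\}$, and from $(\Z,+,\mid)$ one recovers $(\Z,+,\cdot)$, hence multiplication on the modelled copy of $\Z$ is existentially definable. The key point is that $O_{K,\calW_K}\cap\Q$ consists exactly of those $x\in O_{K,\calW_K}$ that can be written as a ratio $x=a/b$ with $a,b$ in the modelled copy of $\Z$ inside $O_{K,\calW_K}$ — more precisely, those $x$ such that $bx$ lies in (the image of) $\Z$ for some nonzero modelled integer $b$. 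Since the modelled integers and the ratio relation are all $\exists$-definable (using the model from Section~1, and noting that nonzero elements of the ring are $\exists$-definable), the set $O_{K,\calW_K}\cap\Q$ is in fact \emph{existentially} definable — which is a fortiori a definition with one universal quantifier (a purely existential formula can be padded with a dummy $\forall$).

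First I would spell out the modelled copy of $\Z$: by Theorem~\ref{divmodel} there is an $\exists$-formula $\iota(n)$ carving out a set $\widetilde\Z\subseteq O_{K,\calW_K}^{k}$ for some $k$, with an $\exists$-definable bijection to $\Z$ under which $+$ and $\mid$ pull back to $\exists$-definable relations. Second, I would observe that the actual element of the ring represented by a tuple $n\in\widetilde\Z$ — call it $\operatorname{val}(n)$ — is itself $\exists$-definable as a function $\widetilde\Z\to O_{K,\calW_K}$, because the construction in Lemma~\ref{le:defdiv} builds the model out of honest ring elements $a_k,b_k$ coming from the $x$-coordinates $x_{km_0}(P)$; tracking through that lemma, one sees the integer $k$ is tied to a genuine ring element in an $\exists$-definable way. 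Third, I would write
\[
x\in \Q \cap O_{K,\calW_K}
\iff
\exists\, m,n\in\widetilde\Z:\ n\neq 0 \ \wedge\ \operatorname{val}(n)\cdot x=\operatorname{val}(m),
\]
whose right-hand side is existential; the forward direction writes a rational as a ratio of integers, and the backward direction uses that $\operatorname{val}(n),\operatorname{val}(m)$ land in $\Z\subseteq O_{K,\calW_K}$ (the subtlety being that the modelled copy of $\Z$ sits inside the ring as actual integers, which again is guaranteed by how the model was built from $x$-coordinates of multiples of a rational point, so their values are rational — one must check they are in fact in $\Z$ and not merely in $\Q$, or conversely simply take $\Z$ itself as the index set and avoid this issue).

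The main obstacle is the bookkeeping in the third step: one needs the modelled integers to be realised inside $O_{K,\calW_K}$ as \emph{literal} integers (or rationals), so that ratios of them are exactly $\Q\cap O_{K,\calW_K}$, and one needs every ingredient — membership in $\widetilde\Z$, the valuation map, nonzeroness — to be genuinely existential so that the final formula has no universal quantifier at all (making ``one universal quantifier'' trivially true). Concretely, this amounts to checking that the model of Theorem~\ref{divmodel} can be set up (perhaps after a harmless modification, e.g. composing the model's bijection so that the index $n\in\Z$ is itself the ring element $n\cdot 1_K$) so that $\operatorname{val}$ is the inclusion $\Z\hookrightarrow O_{K,\calW_K}$; then $x\in\Q\cap O_{K,\calW_K}\iff\exists a,b\in\Z\subseteq O_{K,\calW_K}$ with $b\neq0$ and $bx=a$, which is patently existential. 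If instead one must genuinely use a single $\forall$, the fallback is to note that $\Z$ is existentially definable in $O_{K,\calW_K}$ once we have $(\Z,+,\cdot)$ modelled and can pick out the submodel's image — but the excerpt's later results suggest the authors reserve the harder $\forall$-definitions for $\Z$ itself, and here the point is precisely that defining the subfield $\Q\cap O_{K,\calW_K}$ is \emph{easier}, costing only existential quantifiers.
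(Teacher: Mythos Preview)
Your proposal has a genuine gap at the ``val'' step. The Diophantine model of $(\Z,+,\mid)$ built in Lemma~\ref{le:defdiv} encodes an integer $k$ by a tuple such as $(a_k,b_k)$ satisfying $x_{km_0}^{h_K}=a_k/b_k$; the tuples are ring elements, but they are \emph{not} the integers $k$ themselves, and there is no known existential formula recovering $k$ from $(a_k,b_k)$. If your $\operatorname{val}$ map were $\exists$-definable with image in $\Z$, then $\Z$ itself would be a Diophantine subset of $O_{K,\calW_K}$ --- precisely the open problem the paper is working around (cf.\ the introduction's distinction between Diophantine \emph{models} and Diophantine \emph{definitions of the subset}). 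Your fallback ``simply take $\Z$ itself as the index set'' is circular for the same reason: you cannot existentially carve out $\Z\subset O_{K,\calW_K}$ to begin with. So the claimed purely existential definition of $\Q\cap O_{K,\calW_K}$ does not go through.

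The paper's proof proceeds quite differently and uses the universal quantifier in an essential way. It fixes a rational prime $q$ with $K$-factors $\qq_1,\dots,\qq_r$ and declares $u$ to be in the defined set iff for every $v$ there exist multiples $jm_1m_0$ and $\ell m_1m_0$ of $P$ (with $j\mid\ell$) such that $x_{jm_1m_0}$ has smaller $\qq_i$-order than $v$ and $u$ is close to $x_{jm_1m_0}/x_{\ell m_1m_0}$ in the sense of~(\ref{eq:need}). Lemma~\ref{le:equiv} says this ratio is congruent to $(\ell/j)^2$ modulo the denominator of $x_{jm_1m_0}$, so the condition forces $u$ to be $\qq_i$-adically approximated by rational squares to arbitrary precision (the $\forall v$ supplies ``arbitrary''), and Proposition~\ref{extensions} then yields $u\in\Q$. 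Conversely, if $u=k^2\in\Z$, Lemma~\ref{multiple} and Lemma~\ref{le:orderchange} produce the required $j$ for any $v$, and one sets $\ell=kj$; sums of four squares and ratios extend this to all of $\Q\cap O_{K,\calW_K}$. The single $\forall$ here is not a dummy: it is what makes the $p$-adic approximation unbounded.
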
%
\begin{proof}%
If $K=\Q$, then the statement of the theorem is trivial.  So without loss of generality we can
assume that $K \not = \Q$.  Let $q$ be a rational prime and let $\qq_1,\ldots,\qq_r$ be all the
factors of $q$  in $K$. Let $u \in K$ be such that $\forall v \in K \exists x_{jm_1m_0}, x_{\ell
m_1m_0}$ such that \begin{enumerate} \item $j \Big{|} \ell$, \item for all $i=1,\ldots r$ we have
that  $\ord_{\qq_i}x_{km_1m_0} < \ord_{\qq_i}v$, \item for all $i=1,\ldots r$ we have that
\begin{equation}%
\label{eq:need}%
-\ord_{\qq_i}x_{jm_1m_0} <2 \ord_{\qq_i}\left (\frac{x_{j m_1 m_0}}{x_{\ell m_1m_0}}-u\right).
\end{equation}%
\end{enumerate}%
 We claim that $u \in \Q$.
 
 Indeed, fix $v$, consider the corresponding $x_{jm_1m_0}, x_{\ell m_1 m_0}$  and let $k
=\frac{\ell}{j}$. Then by Lemma \ref{le:equiv}
\[%
-\ord_{\qq_i}v <- \ord_{\qq_i}x_{jm_1m_0} <2 \ord_{\qq_i}\left (k^2-u \right).
\]%
Keeping in mind that $v$ is arbitrary, we can apply Proposition \ref{extensions} to reach the desired conclusion.

Suppose now that $u$ is a square of a rational integer $k$. Then using Lemma \ref{multiple} and Lemma \ref{le:orderchange},
for any $v$ we can find $j >0$ such that
\[%
\ord_{\qq_i}x_{jm_0m_1} < \ord_{\qq_i}v
\]%
for all $i=1,\ldots,m$. Set $\ell = kj$ and Lemma \ref{le:equiv} assures us that (\ref{eq:need}) will hold. Finally we remind the
reader that every positive integer can be written as a sum of squares and a every rational number
is a ratio of integers.

\end{proof}%

\section{From Divisibility to Multiplication}

In this section we will address the issue of converting our existential  model of $(\Z, +,|)$ to a
model of $(\Z, +, \times)$.  We will use the same notation as above.  Our starting point is the
following lemma
\begin{lem}
[\cite{CZ}, section 4]%
\label{le:complexity}%
There exists a formula $\Fr(l,m,n)$ in $(\Z,+,|,\neq)$ of the form $(\exists \forall \exists){\mathcal G}$ with one universal quantifier and $\mathcal G$
a  formula which is a conjunction of divisibility conditions and additions, such that for integers
$m,n$, we have $l=m \cdot n \iff \Fr(l,m,n)$. \hfill $\Box$
\end{lem}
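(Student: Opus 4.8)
My plan is to reduce the whole statement to a single elementary fact about $(\Z,+,|,\neq)$ (with the constant $1$ at our disposal, as it is in the model of the preceding section): that the graph of the squaring map admits a definition of the prescribed shape. First I would record the closure remark that a conjunction of formulas each of the form $(\exists\forall\exists)(\text{quantifier-free})$ is again of that form: rename bound variables apart, pull the leading existential blocks to the front, fuse the universal quantifiers using $(\forall x\,\psi_1)\wedge(\forall x\,\psi_2)\equiv\forall x\,(\psi_1\wedge\psi_2)$, and merge the inner existential blocks; the resulting matrix is the conjunction of the original ones (together with any quantifier-free conjuncts one inserts), hence still a conjunction of divisibilities and additions. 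Given such a formula $\mathrm{Sq}(u,m)$ for the relation $u=m^2$, the identity $2mn=(m+n)^2-m^2-n^2$ then yields
\[
l=mn\ \Longleftrightarrow\ \exists\,u,w,z,s\ \bigl[\,s=m+n\ \wedge\ \mathrm{Sq}(u,m)\ \wedge\ \mathrm{Sq}(w,n)\ \wedge\ \mathrm{Sq}(z,s)\ \wedge\ l+l+u+w=z\,\bigr],
\]
valid for all integers $l,m,n$ without any case split on signs, and of the required form by the closure remark.

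Next I would obtain $\mathrm{Sq}(u,m)$ from a least common multiple. For every integer $a$ one has $\operatorname{lcm}(a,a+1)=a^2+a$, so, once the sign of $\operatorname{lcm}$ is fixed (see below),
\[
u=m^2\ \Longleftrightarrow\ \exists\,a\ \bigl[\,a\mid m\ \wedge\ m\mid a\ \wedge\ u+a=\operatorname{lcm}(a,a+1)\,\bigr],
\]
the variable $a$ ranging over $\{m,-m\}$, so that $u=a^2=m^2$ either way. Since $c=\operatorname{lcm}(p,q)$ is equivalent to $p\mid c\ \wedge\ q\mid c\ \wedge\ \forall d\,[(p\mid d\wedge q\mid d)\to c\mid d]$, substituting this produces a formula for $\mathrm{Sq}(u,m)$ of shape $\exists\forall$ whose matrix is a conjunction of divisibilities and additions \emph{except} for the embedded implication.

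The hard part will be removing that implication, and this is really the content of Section 4 of \cite{CZ}. The clause $\forall d\,[(p\mid d\wedge q\mid d)\to c\mid d]$ conceals negated divisibilities, whereas only a positive conjunction is allowed after the inner existential quantifier. I would exploit that the relevant pair $(p,q)=(a,a+1)$ is coprime with B\'ezout coefficients $\pm1$, which lets one rewrite ``[$d$ is a common multiple of $a$ and $a+1$] implies [$c\mid d$]'' in the positive form $\forall d\,\exists(\text{witnesses})\,\mathcal G'$, the extra existential block absorbing the case analysis and $\neq$ taking care of nondivisibility. A second, parallel issue is that $\operatorname{lcm}$ in $(\Z,+,|)$ is only well defined up to sign, so the naive equivalence above also admits the spurious solutions $u=-m^2\mp2m$; these are excluded by appealing to the constant $1$ (equivalently, to a notion of positivity) --- which is exactly why the sign-flip automorphism of $(\Z,+,|)$ causes no trouble in our setting. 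Carrying out this positivity-and-uniqueness bookkeeping, which involves no quantitative estimate whatsoever, is the crux, and it delivers a formula $\Fr(l,m,n)$ of precisely the stated form.
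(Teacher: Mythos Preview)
The paper offers no proof beyond the citation to \cite{CZ}, so there is no in-house argument to compare against; one can only assess your sketch on its own terms.

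Your preliminary reductions are correct: the class of $(\exists\forall\exists)$-formulas with a single universal quantifier and conjunctive matrix is closed under finite conjunction (rename and merge the universal variable), polarization reduces $l=mn$ to three instances of squaring, and $\operatorname{lcm}(a,a+1)=a^{2}+a$ reduces squaring to an $\operatorname{lcm}$ of consecutive integers. The gap is precisely where you say it is, and you do not close it. Two concrete obstructions remain. First, the minimality clause $\forall d\,\bigl[(a\mid d)\wedge((a{+}1)\mid d)\to(c\mid d)\bigr]$ must be rewritten as $\forall d\,\exists\bar w\,\mathcal G'$ with $\mathcal G'$ a \emph{pure conjunction}; your B\'ezout remark $(a{+}1)-a=1$ certifies coprimality of $a$ and $a+1$ but produces no existential certificate of ``$a\nmid d$'' in the language $(+,\,|,\,\neq)$ --- the natural witness, a nonzero remainder of absolute value below $|a|$, already presupposes an order relation you do not have, and ``$\neq$ taking care of nondivisibility'' is not an argument. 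Second, fixing the sign of $\operatorname{lcm}$: naming the constant $1$ does break the automorphism $n\mapsto-n$, but it does not by itself yield a low-complexity definition of positivity in $(\Z,+,|,1,\neq)$, so you cannot simply ``select the nonnegative $\operatorname{lcm}$'' and discard the spurious values $u=-m^{2}\mp 2m$. Handling both points so that the resulting matrix is genuinely a conjunction of divisibilities, linear equations and inequations is exactly the nontrivial content of the cited section; saying ``this is really the content of Section~4 of \cite{CZ}'' at the crux is circular, since that is the very lemma under discussion. As it stands, your proposal reduces the statement to its own hard core and then leaves that core unproved.
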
%

In our model of addition and divisibility we send a non-zero integer to triples $\{(x,y,z) \in
O_{K,\calW_K}^3\}$ where $(\frac{x}{z},\frac{y}{z})$ are affine coordinates of points on $E(K)$
with respect to our fixed affine Weierstrass equation $W$. Thus, a direct translation of ``$\forall
n \in \Z$'' becomes ``for all $(x,y,z)$ such that $(\frac{x}{z},\frac{y}{z})$ satisfy $W$'', which
uses three universal quantifiers. However, a result of Poonen from \cite{PO5} can be used to reduce
the number of $\forall$-quantifiers by one. Indeed, in \cite{PO5} it is shown that the set of
non-squares of a number field is Diophantine. Thus, we get
\begin{lem}%
A sentence of the form \textup{``for all $(x,y,z) \in O_{K,\calW_K}$ such that $(\frac{x}{z},
\frac{y}{z})$ satisfy $W$''} is equivalent to a sentence of the form
\[%
\forall x,z \in O_{K,\calW_K}:  ((z \not =0) \land(\exists y \in O_{K,\calW_K}: \frac{x^3}{z^3}+
\frac{ax}{z}+ b = \frac{y^2}{z^2})    \lor
\]%
 \[%
   ((x^3z^3+ axz^6+ bz^5 \mbox{\textup{ is not a square in }} K) \lor z=0),
\]%
involving only two universal quantifiers.
\end{lem}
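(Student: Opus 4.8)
The plan is to remove the quantifier over $y$. The only real input is the elementary logical equivalence of $\forall y\,(\Psi(y)\to\Phi)$ with $(\exists y\,\Psi(y))\to\Phi$ whenever $\Phi$ does not contain $y$; so the first thing I would verify is that the formula $\Phi$ following the quantifier prefix ``for all $(x,y,z)$ on $W$'' is indeed free of $y$. This is the case here: in the model of Theorem~\ref{divmodel}, obtained by composing the existential definition of divisibility of Lemma~\ref{le:defdiv} with the formula of Lemma~\ref{le:complexity}, every atomic subformula after the prefix is an addition or a divisibility condition among quantities manufactured from the $x$-coordinates $x_{km_0}(P)$ only; the $y$-coordinates never occur. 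Writing $\Phi(x,z)$ for this tail (it depends only on $x/z$), the sentence under consideration is therefore equivalent to
\[
\forall x,z\in O_{K,\calW_K}\colon\quad \bigl(\exists y\in O_{K,\calW_K}\colon z\neq 0\ \wedge\ (y/z)^2=(x/z)^3+a(x/z)+b\bigr)\ \longrightarrow\ \Phi(x,z),
\]
i.e., rewriting the implication as a disjunction, to $\forall x,z\colon z=0\ \vee\ \neg(\exists y\colon\ldots)\ \vee\ \Phi(x,z)$; it then remains to express $\neg(\exists y\colon\ldots)$ with no universal quantifier.

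For this I would clear denominators. Multiplying the Weierstrass relation by $z^{6}$ gives $z^{6}\bigl((x/z)^3+a(x/z)+b\bigr)=x^3z^3+axz^{5}+bz^{6}$, which is a genuine element of $O_{K,\calW_K}$ as soon as $x,z\in O_{K,\calW_K}$. Because $z^{6}$ is a nonzero square and $O_{K,\calW_K}$ — being a localization of the integrally closed ring $O_K$ — is itself integrally closed, the existence (for $z\neq 0$) of $y\in O_{K,\calW_K}$ with $(y/z)^2=(x/z)^3+a(x/z)+b$ is equivalent to $x^3z^3+axz^{5}+bz^{6}$ being a square in $K$: one encodes the point by $y=z^{3}Y$, and integral closedness guarantees that any square root in $K$ of an element of $O_{K,\calW_K}$ already lies in $O_{K,\calW_K}$. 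I expect this to be the step requiring care: one must clear by a high enough (even) power of $z$ — clearing only by $z^{2}$ would leave a ``square of an element of $z^{2}O_{K,\calW_K}$'' condition, which is strictly stronger than ``square in $K$'' — so that presence or absence of a ring-witness $y$ matches squareness in $K$ of an honest ring element exactly. Thus $\neg(\exists y\colon\ldots)$ becomes ``$x^3z^3+axz^{5}+bz^{6}$ is not a square in $K$''.

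Finally I would invoke Poonen's theorem from \cite{PO5} that the non-squares of a number field form a Diophantine set: this rewrites ``$x^3z^3+axz^{5}+bz^{6}$ is not a square in $K$'' as an existential, hence $\forall$-free, formula over $O_{K,\calW_K}$. Substituting it back, and keeping the first disjunct in the displayed existential shape $z\neq 0\wedge\exists y(\ldots)$, into which $\Phi(x,z)$ may be absorbed, yields a sentence of exactly the stated form whose only universal quantifiers are the two ranging over $x$ and $z$. Both directions of the resulting equivalence are then immediate: ``$\Leftarrow$'' from the computations above, and ``$\Rightarrow$'' from integral closedness of $O_{K,\calW_K}$ together with the fact that $\Phi$ depends on $x/z$ alone.
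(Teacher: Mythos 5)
Your proposal follows essentially the same route as the paper, which states this lemma without proof as a direct consequence of Poonen's theorem that the non-squares of a number field are Diophantine: you correctly isolate the three ingredients (the tail formula after the prefix depends only on $x/z$, never on $y$; the guard ``$\exists y$ on $W$'' can be pulled out of the universal prefix; its negation becomes ``the denominator-cleared cubic is a non-square in $K$'', made existential by \cite{PO5}).

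One point in your write-up is internally inconsistent and worth fixing. You assert that, for $z\neq 0$, the existence of $y\in O_{K,\calW_K}$ with $(y/z)^2=(x/z)^3+a(x/z)+b$ is \emph{equivalent} to $x^3z^3+axz^5+bz^6$ being a square in $K$, and justify the converse by ``encoding the point as $y=z^3Y$''. But $y=z^3Y$ satisfies $(y/z^3)^2=(x/z)^3+a(x/z)+b$, not $(y/z)^2=\cdots$, and the biconditional with denominator $z^2$ is in fact false: for a point with $X=A/C^2$, $Y=B/C^3$ in lowest terms and $C>1$, taking $x=A$, $z=C^2$ makes the cleared cubic the square $(BC^3)^2$ while the required $y=Yz=B/C$ fails to lie in the ring. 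The correct statement — and the one your own fix actually proves via integral closedness of $O_{K,\calW_K}$ — is the version in which $y$ carries denominator $z^3$ (equivalently, in which $y$ is allowed its own denominator); since the tail depends only on $x/z$, this costs nothing. Note that the paper's displayed formula has the same $y^2/z^2$ wrinkle, as well as a typo in the exponents ($axz^6+bz^5$ for $axz^5+bz^6$), so this is a defect of the statement rather than of your argument; but a proof should repair it explicitly rather than assert the unrepaired equivalence.
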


\begin{rem}
The proof of Theorems 4.2 and 5.3 in \cite{CZ} contains a gap that can be fixed by the same technique: expressions of the form
``forall $(x,y) \in \Q^2$ satisfying $W$'' can be replaced by
\[%
\forall x \in \Q ((\exists y \in \Q: W(x,y)=0)   \lor (x^3+ ax+ b \mbox{\textup{ is not a square in }} \Q)).
\]%
\end{rem}

Combining the discussion above with Theorem \ref{le:complexity} we obtain the following.
\begin{thm}%
\label{thm:mult}
The set
\[%
\Pi=\{(A,B,C,D, Y, F) \in O_{K,\calW_K}| \exists j, k, z \in \Z_{>0}: \left\{ \begin{array}{c} z=jk
\\ x_{jm_0}=A/D \\ x_{km_0}=B/Y \\ x_{zm_0} = C/F \end{array} \right\} \}
\]%
is definable in $\calO_{K,\calW_K}$ by a $\exists \forall \exists$--formula using two universal
quantifiers. \qed
\end{thm}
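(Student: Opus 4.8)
The plan is to assemble Theorem \ref{thm:mult} by directly combining the divisibility model from Section~1 with Lemma~\ref{le:complexity} and the quantifier-reduction lemma for points on $W$. The key observation is that the set $\Pi$ encodes the graph of multiplication: a tuple $(A,B,C,D,Y,F)$ lies in $\Pi$ exactly when, writing $j,k,z$ for the indices with $x_{jm_0}=A/D$, $x_{km_0}=B/Y$, $x_{zm_0}=C/F$, we have $z=jk$. By Theorem~\ref{divmodel} and Lemma~\ref{le:defdiv}, the relations ``$x_{jm_0}=A/D$ with $(A,D)$ coprime in $O_{K,\calW_K}$'' give an existential (diophantine) model of the nonzero positive integers inside $O_{K,\calW_K}^2$, and divisibility $j\mid z$ in this model is existentially definable via equation~\eqref{eq:divide}. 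So the only nontrivial content is expressing $z=jk$, i.e.\ multiplication, in terms of $+$ and $\mid$.

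First I would invoke Lemma~\ref{le:complexity}: there is a formula $\Fr(z,j,k)$ in the language $(\Z,+,\mid,\neq)$ of shape $(\exists\,\forall\,\exists)\calG$, with a single universal quantifier and $\calG$ a conjunction of divisibility and additive conditions, such that for integers $z=jk \iff \Fr(z,j,k)$. Next I would transport $\Fr$ through the existential divisibility model of $(\Z,+,\mid)$ over $O_{K,\calW_K}$ supplied by Theorem~\ref{divmodel}: each integer variable in $\Fr$ becomes a pair (or triple) of ring elements representing some $x_{nm_0}=a/b$, each atomic divisibility or additive subformula of $\calG$ becomes an existential ring formula, and the leading $\exists$ and trailing $\exists$ block absorb into a single existential layer. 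This produces a formula over $O_{K,\calW_K}$ of the form $(\exists\,\forall\,\exists)$ with one universal quantifier — \emph{provided} the universal quantifier of $\Fr$, which ranges over an integer, can be faithfully replaced by a universal quantifier over ring elements of the model without introducing further alternations.

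That faithful replacement is exactly where the quantifier-reduction lemma enters, and it is the step I expect to be the main obstacle. A naive translation of ``$\forall n\in\Z$'' in the model reads ``for all $(x,y,z)\in O_{K,\calW_K}^3$ with $(x/z,y/z)$ on $W$,'' which costs \emph{three} universal quantifiers. Using Poonen's theorem from \cite{PO5} that non-squares form a diophantine set (the displayed Lemma just above Theorem~\ref{thm:mult}), the condition ``$(x/z,y/z)$ satisfies $W$'' can be rewritten as $\forall x,z\,\bigl((z\neq0)\wedge\exists y\,(\tfrac{x^3}{z^3}+\tfrac{ax}{z}+b=\tfrac{y^2}{z^2})\bigr)\vee\bigl(x^3z^3+axz^6+bz^5\text{ is not a square}\bigr)\vee z=0$, which uses only \emph{two} universal quantifiers. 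One of those two is the genuine model-universal quantifier already present as $\Fr$'s $\forall$; the other is the auxiliary $x$ versus $z$ quantifier that the $W$-on-$E$ encoding forces. After sweeping the non-square test and all existential witnesses into the inner $\exists$-block, and the model-defining existential relations for $A/D,B/Y,C/F$ and for $z=jk$ into the outer $\exists$-block, what remains is a $(\exists\,\forall\,\exists)$ formula over $O_{K,\calW_K}$ with precisely two universal quantifiers defining $\Pi$.

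To finish I would verify the bookkeeping: (i) confirm that expanding $\Fr$'s $\exists\forall\exists$ over the existential model keeps the alternation pattern $\exists\forall\exists$ rather than pushing it to $\exists\forall\exists\forall\exists$ — this works because the model's atomic relations are purely existential, so inner existentials merge and the single block of leading existentials in $\Fr$ merges with the model-encoding existentials for $A,B,C,D,Y,F,j,k,z$; (ii) check that the positivity/nonzero requirement $j,k,z\in\Z_{>0}$ is itself existentially definable in the model (it is, since $\calS_1=\emptyset$ forces the denominator divisor to be trivial exactly at $n=\pm1$, and in general the model already ranges over positive indices by construction of the $x_{nm_0}$); and (iii) confirm the two universal quantifiers are exactly the $\forall$ of $\Fr$ together with the single extra $\forall$ introduced by the $W$-on-$E$ translation, with no third one hiding in the non-square predicate. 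The only real subtlety is (iii), i.e.\ ensuring the elliptic-curve encoding of the model's universal quantifier does not secretly cost two extra quantifiers; the non-square lemma is precisely the tool that caps this at one, giving the claimed total of two.
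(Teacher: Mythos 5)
Your proposal is correct and follows essentially the same route as the paper: the paper likewise obtains Theorem \ref{thm:mult} by transporting the $(\exists\forall\exists)$ formula of Lemma \ref{le:complexity} through the existential model of $(\Z,+,|)$ from Theorem \ref{divmodel}, with the single integer universal quantifier becoming the two ring quantifiers $\forall x,z$ via the non-square lemma of Poonen. Your quantifier bookkeeping, including the observation that the $y$-coordinate is absorbed existentially (or via the non-square disjunct) so that only two universal quantifiers remain, matches the paper's argument.
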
%

This theorem says that there is a model of the integers over the big ring involving two universal
quantifiers. We now have to work a bit more to define the actual subset of integers by a similar
formula.

\section{From Models to Subset-definitions}
In this section we will use Theorem \ref{thm:mult} to define the actual set of integers in large rings, using
two universal quantifier. First we extend our list of notation and assumptions.
\begin{notationassumptions}%
The notation and assumptions  below will be used in the remainder of the paper.
\begin{itemize}%
\item  $n=[K:\Q]$.%

\item  $L$ is any extension of $K$ of degree $r >0$.%
\item $\gamma \in O_L$ generates $L$ over $K$.
\item $d$ is an integer greater than $|\gamma -\sigma(\gamma)|$ for any embedding $\sigma$ of $L$ into its algebraic closure. %
\item  $G(T)=G_0(T)$ is the monic irreducible polynomial of $\gamma$ over $K$.
\item  $G_i(T)=G_0(T-di), i=1,\ldots,n$.%
\item Assume $\calW_K$ contains only the primes of $K$ without relative degree one factors in $L$
and not dividing the discriminant of $G_0$ (and consequently of any $G_i, i=1,\ldots,n$.)%
\item Assume $\calS_{\bad} \subset \calW_K$.%
\item Let $l_0=0,\ldots,l_{rn}$ be distinct natural numbers. %
\item For $x \in K$ let $\nn(x)=\prod\limits_{\pp \in \calP_K, \ord_{\pp} x >0}\pp^{\ord_{\pp} x}$
and let $\dd(x)=\prod\limits_{\pp \in \calP_K, \ord_{\pp} x <0}\pp^{-\ord_{\pp} x}$.     Similarly,
if $\mathfrak e$ is a divisor of $K$, we will denote the numerator of ${\mathfrak e}$ by
$\nn(\mathfrak e)$ and the denominator of ${\mathfrak e}$  by $\dd(\mathfrak e)$.%
\item Let ${\mathfrak U}, \mathfrak G$ be integral divisors of $K$ such that $\mathfrak
G^2=\mathfrak U$. Then we will denote $\mathfrak G$ by $\sqrt{\mathfrak U}$.
\end{itemize}%
\end{notationassumptions}%

We now go over some technical facts. The proof of the following four lemmas can be found in
\cite{Po} or \cite{Sh33}.

\begin{lem}%
\label{le:relprime} With $m_1$ as in Lemma \ref{le:equiv},
$(\dd(x_{lm_1}), \nn(x_{klm_1}))=1$ in the integral divisor semigroup of $K$. \qed
\end{lem}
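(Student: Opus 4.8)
The plan is to prove that $(\dd(x_{lm_1}), \nn(x_{klm_1})) = 1$ by the usual valuation-by-valuation argument using the group-theoretic structure of the "denominator sets" $\calS_n$. Fix a prime $\pp \in \calP_K$ not in $\calS_\bad$; I must show $\pp$ cannot simultaneously divide $\dd(x_{lm_1})$ and $\nn(x_{klm_1})$. Note $\pp \mid \dd(x_{lm_1})$ means precisely $\ord_\pp x_{lm_1} < 0$, i.e.\ $\pp \in \calS_{lm_1}$; and $\pp \mid \nn(x_{klm_1})$ means $\ord_\pp x_{klm_1} > 0$, in particular $\ord_\pp x_{klm_1} \geq 0$, so $\pp \notin \calS_{klm_1}$. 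So it suffices to show $\calS_{lm_1} \subseteq \calS_{klm_1}$, which is immediate from Lemma~\ref{le:orderchange}: since $lm_1 \mid klm_1$, we have $\dd_{lm_1} \mid \dd_{klm_1}$, hence $\calS_{lm_1} \subseteq \calS_{klm_1}$. This already handles every $\pp \notin \calS_\bad$.

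Next I would treat the primes $\pp \in \calS_\bad$. For these primes one uses the theory of formal groups / reduction at $\pp$: a prime of bad reduction (or a prime where $P$ is non-integral) behaves uniformly along the sequence $nP$ once one passes to a suitable multiple, and this is exactly the content bundled into the choice of $m_1$ in Lemma~\ref{le:equiv} (which is quoted from \cite{Po}). The point is that $\dd(x_n)$ and $\nn(x_n)$ as divisors of $K$ include contributions from $\calS_\bad$ (the factor $\mathfrak{b}_n$ in Notation~\ref{S:notation section}), but the relevant statement is about the full divisors $\dd(\cdot)$ and $\nn(\cdot)$; one checks that for $\pp \in \calS_\bad$ either $\ord_\pp x_{lm_1} \geq 0$ or $\ord_\pp x_{klm_1} \leq 0$, using that multiplication by $m_1$ moves the point into the formal group at $\pp$ where the standard estimates $\ord_\pp x_{mn} = \ord_\pp x_m$ (for $\pp\nmid n$) or a controlled increase (for $\pp \mid n$) apply, all with strictly negative $\ord_\pp x$. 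In fact the cleanest route is to observe that $m_1$ can be taken divisible enough that $\ord_\pp x_{m_1 n} < 0$ for all $\pp \in \calS_\bad$ and all $n$, or else $\ge 0$ independently of $n$; in the former case $\pp \nmid \nn(x_{klm_1})$, in the latter $\pp \nmid \dd(x_{lm_1})$.

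Since I am allowed to cite Lemma~\ref{le:equiv}, \ref{le:orderchange}, and the quoted material from \cite{Po,PS}, the honest write-up is short: the archimedean and bad-reduction bookkeeping is precisely what the constant $m_1$ was engineered to control, and for good primes it is pure divisibility of the groups $\calS_n$. The main obstacle — or rather the only place requiring care — is making sure the claim is about the \emph{full} numerator/denominator divisors $\nn(\cdot), \dd(\cdot)$ of $K$ (including $\calS_\bad$-components), not just the $\dd_n$-part of Notation~\ref{S:notation section}; one must not silently conflate $\dd(x_n)$ with $\mathfrak{d}_n$. Once that distinction is respected, the good-prime case is Lemma~\ref{le:orderchange} verbatim and the bad-prime case is the defining property of $m_1$ from \cite{Po}, so no genuinely new estimate is needed. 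I would therefore present the proof as: "reduce to showing no prime divides both; for $\pp\notin\calS_\bad$ apply Lemma~\ref{le:orderchange} and the inclusion $\calS_{lm_1}\subseteq\calS_{klm_1}$; for $\pp\in\calS_\bad$ use the choice of $m_1$ as in \cite{Po}."
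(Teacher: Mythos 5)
Your argument is essentially correct, but note that the paper itself gives no proof of this lemma --- it is one of the four statements deferred wholesale to \cite{Po} and \cite{Sh33} --- so the comparison is really with Poonen's original argument, which your two-case structure does mirror: for $\pp\notin\calS_\bad$ the inclusion $\calS_{lm_1}\subseteq\calS_{klm_1}$ (from $\dd_{lm_1}\mid\dd_{klm_1}$ in Lemma~\ref{le:orderchange}) is exactly right and fully rigorous, and for $\pp\in\calS_\bad$ the fact you invoke (that $m_1$ is constructed in \cite{Po} so that $m_1P$ lies in the kernel of reduction at every bad prime, whence $\ord_\pp x_{nm_1}<0$ for all $n$) is true but is \emph{not} recoverable from anything stated in this paper, so that half of your proof is genuinely an external citation rather than a derivation. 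It is worth pointing out that you can avoid the case split and the appeal to the construction of $m_1$ entirely: the lemma follows formally from the \emph{statement} of Lemma~\ref{le:equiv} alone. Indeed, if some $\pp$ divided both $\dd(x_{lm_1})$ and $\nn(x_{klm_1})$, then $\ord_\pp x_{lm_1}<0$ and $\ord_\pp x_{klm_1}>0$, so
\[
\ord_\pp\left(\frac{x_{lm_1}}{x_{klm_1}}-k^2\right)=\ord_\pp\left(\frac{x_{lm_1}}{x_{klm_1}}\right)<0
\]
by the ultrametric inequality (since $\ord_\pp k^2\geq 0$); hence $\pp$ does not divide $\nn\bigl(\frac{x_{lm_1}}{x_{klm_1}}-k^2\bigr)^2$, contradicting the divisibility of Lemma~\ref{le:equiv} because $\ord_\pp\dd(x_{lm_1})>0$. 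This one-line route works uniformly for all primes, good or bad, uses only what the paper actually states about $m_1$ (which is, after all, \emph{defined} here by the property in Lemma~\ref{le:equiv}), and is not circular within the paper's presentation since Lemma~\ref{le:equiv} is taken as a black box. Your version buys a more structural understanding (it explains \emph{why} coprimality holds via the groups $\calS_n$ and the formal group at bad primes), at the cost of importing an unstated property of $m_1$; the short version buys self-containedness. You are also right to insist on the distinction between $\dd(x_n)$ and $\dd_n$ --- that is precisely the point where a careless proof would silently drop the $\calS_\bad$ component.
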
%

From Lemma \ref{le:equiv} and Lemma \ref{le:relprime} we also deduce the following corollary.
\begin{cor}%
\label{cor:h_K}%
\[%
\dd(x_{lm_1}) \Big{|} \nn\left (\frac{x^{h_K}_{lm_1}}{x^{h_K}_{klm_1}}-k^{2h_K}\right)^2
\]%
\end{cor}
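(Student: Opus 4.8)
The plan is to reduce the stated divisibility of integral divisors to a prime‑by‑prime (valuation) inequality, and to extract the $h_K$‑th power out of Lemma~\ref{le:equiv} by means of the elementary factorisation
$u^{h_K}-c^{h_K}=(u-c)\sum_{i=0}^{h_K-1}c^{i}u^{h_K-1-i}$, applied with $u=x_{lm_1}/x_{klm_1}$ and $c=k^{2}$. Concretely, it suffices to prove that for every prime $\pp$ of $K$ one has $\ord_\pp\dd(x_{lm_1})\le 2\,\ord_\pp\nn\!\left(\frac{x^{h_K}_{lm_1}}{x^{h_K}_{klm_1}}-k^{2h_K}\right)$, and only primes $\pp$ with $\ord_\pp\dd(x_{lm_1})>0$, i.e. $\ord_\pp x_{lm_1}<0$, require attention.

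First I would record what Lemma~\ref{le:equiv} gives at such a $\pp$: since $\ord_\pp\dd(x_{lm_1})=-\ord_\pp x_{lm_1}>0$ and $\dd(x_{lm_1})\mid\nn(u-c)^{2}$, we obtain $\ord_\pp\nn(u-c)>0$, hence $\ord_\pp(u-c)=\ord_\pp\nn(u-c)\ge\tfrac12\ord_\pp\dd(x_{lm_1})>0$. Next I would control the cofactor $\Sigma:=\sum_{i=0}^{h_K-1}c^{i}u^{h_K-1-i}$ at $\pp$. As $c=k^{2}$ is a rational integer, $\ord_\pp c\ge0$; together with $\ord_\pp(u-c)>0$ the ultrametric inequality forces $\ord_\pp u\ge0$ (in fact $\ord_\pp u=0$ when $\ord_\pp c=0$, and $\ord_\pp u>0$ when $\ord_\pp c>0$). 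Consequently every monomial $c^{i}u^{h_K-1-i}$ is $\pp$‑integral, so $\ord_\pp\Sigma\ge0$. Lemma~\ref{le:relprime} enters naturally at this point as well: it guarantees $\ord_\pp x_{klm_1}\le0$ at every $\pp\mid\dd(x_{lm_1})$, which is exactly the input one needs if one prefers to run the same count through the numerators and denominators of $x_{lm_1}$ and $x_{klm_1}$ separately instead of through $u$.

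Finally I would multiply out: $\ord_\pp\!\left(\frac{x^{h_K}_{lm_1}}{x^{h_K}_{klm_1}}-k^{2h_K}\right)=\ord_\pp(u^{h_K}-c^{h_K})=\ord_\pp(u-c)+\ord_\pp\Sigma\ge\tfrac12\ord_\pp\dd(x_{lm_1})>0$, so that $\ord_\pp\nn\!\left(\frac{x^{h_K}_{lm_1}}{x^{h_K}_{klm_1}}-k^{2h_K}\right)$ equals that same valuation; doubling yields the desired inequality at $\pp$, and it is trivial at all other primes, which proves the corollary. The argument is essentially a formal computation, so there is no real obstacle; the only points needing a little care are checking that the relevant valuations of $u-c$ are \emph{strictly} positive (so that nothing is lost in passing to $\nn(\,\cdot\,)$) and disposing of the degenerate possibilities $h_K=1$ (where $\Sigma=1$ and the statement collapses to Lemma~\ref{le:equiv}) and $u^{h_K}=c^{h_K}$ (where $\nn(0)$ would appear and the assertion is vacuous).
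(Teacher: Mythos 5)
Your argument is correct and is essentially the proof the paper intends: the corollary is stated without proof as a consequence of Lemmas \ref{le:equiv} and \ref{le:relprime}, and the standard route is exactly your factorisation $u^{h_K}-c^{h_K}=(u-c)\Sigma$ together with the $\pp$-adic integrality of $\Sigma$ at the primes dividing $\dd(x_{lm_1})$. Your valuation-by-valuation verification (including the observation that the ultrametric inequality alone gives $\ord_{\pp}u\ge 0$, so Lemma \ref{le:relprime} is only needed as an alternative bookkeeping device) is sound and complete.
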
%
\begin{lem}%
\label{le:square}
For any $k \in \Z_{>0}$ we have that $\dd(x_k)$, $\dd_k$ are squares of some integral divisors of $K$. \qed
\end{lem}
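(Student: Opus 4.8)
The plan is to reduce both assertions of Lemma~\ref{le:square} to a single local parity statement: \emph{for every finite prime $\pp$ of $K$ with $\ord_{\pp}(x_k)<0$, the integer $-\ord_{\pp}(x_k)$ is even.} Granting this, $\dd(x_k)=\prod_{\ord_{\pp}x_k<0}\pp^{-\ord_{\pp}(x_k)}$ is visibly the square of the integral divisor $\prod_{\ord_{\pp}x_k<0}\pp^{-\ord_{\pp}(x_k)/2}$. Moreover, by the way ${\mathfrak a}_n,{\mathfrak d}_n,{\mathfrak b}_n$ were defined in Notation~\ref{S:notation section}, the divisor $\dd_k={\mathfrak d}_k$ is obtained from $\dd(x_k)$ simply by discarding the prime powers supported on $\calS_{\bad}$, so the same halving of exponents applies to $\dd_k$ and exhibits it as a square as well. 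Thus after this reduction only the parity statement has to be proved.

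For that I would argue directly from the fixed short Weierstrass model $W\colon y^2=x^3+ax+b$, whose coefficients $a,b$ lie in the ring of integers of $K$, so that $\ord_{\pp}(a)\ge 0$ and $\ord_{\pp}(b)\ge 0$ for every $\pp$. Since $E$ has rank $1$ and $P=tQ$ has infinite order, $kP$ is not the point at infinity, so $(x_k,y_k)\in K^2$ is a genuine affine point on $W$ and $y_k^2=x_k^3+ax_k+b$. Suppose $\ord_{\pp}(x_k)=-s$ with $s\ge 1$. Then the three summands on the right-hand side have $\pp$-valuations $-3s$, $\ord_{\pp}(a)-s\ge -s$ and $\ord_{\pp}(b)\ge 0$; since $s\ge 1$ the value $-3s$ is strictly the smallest, whence $\ord_{\pp}(x_k^3+ax_k+b)=-3s$ and so $2\ord_{\pp}(y_k)=\ord_{\pp}(y_k^2)=-3s$. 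Therefore $2\mid 3s$, i.e.\ $2\mid s$, as required.

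I do not expect a genuine obstacle here: this is the classical observation that on a short Weierstrass model the $x$-coordinate has only even pole orders, and it applies uniformly to every nonzero multiple of $P$, with no hypothesis on the reduction type. The only point needing a little care is bookkeeping rather than mathematics, namely checking that the ``denominator'' $\dd(x_k)$ of the Notation and Assumptions block and the factor ${\mathfrak d}_k$ of the divisor ${\mathfrak a}_k{\mathfrak d}_k^{-1}{\mathfrak b}_k$ from Notation~\ref{S:notation section} match up --- precisely, that every prime dividing $\dd(x_k)$ but not $\dd_k$ lies in $\calS_{\bad}$ and enters $\dd({\mathfrak b}_k)$ with the same (now known to be even) exponent --- so that halving exponents is legitimate for both divisors at once. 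Had the general Weierstrass form been used one would first complete the square and cube, but since the short form is fixed this step is unnecessary.
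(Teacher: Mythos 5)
Your proof is correct and is essentially the standard argument the paper relies on: the paper itself gives no proof but cites \cite{Po} and \cite{Sh33}, where the fact is established by exactly this local parity computation on the short Weierstrass model ($2\ord_{\pp}(y_k)=\ord_{\pp}(x_k^3+ax_k+b)=3\ord_{\pp}(x_k)$ at a pole, valid at every prime since $a,b\in O_K$). Your bookkeeping remark relating $\dd(x_k)$ to ${\mathfrak d}_k$ via the primes of $\calS_{\bad}$ is also accurate.
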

Finally we state a lemma which will give us a handle on the bounds.  The proof can be found in
Chapter 5 of \cite{Sh34}.
\begin{lem}%
\label{le:bounds} Let $v \in O_{K,\calW_K}$, let $\alpha, \beta \in O_K$ be relatively prime in $O_K$, $v$ not a
unit of $O_{K,\calW_K}$. Assume
\[%
\frac{v^{h_K}}{\prod_{i=0}^{rn}G_i(\alpha/\beta-l_i)} \in O_{K,\calW_K}.
\]%
Then  $v^{h_K} = yw$, where  $y \in O_K$, all the primes occurring  in the divisor of  $y$ are not in $\calW_K$, $w
\in O_{K,\calW_K}$ and all the primes occurring in the divisor of $w$ are in $\calW_K$.  Further, there exists a
positive constant $c$ depending only on $G_0, l_1,\ldots, l_{rn}$, such that for all embeddings $\sigma$ of $K$
into its algebraic closure, $|\sigma(\alpha/\beta)| < |{\mathbf N}_{K/\Q}(y)|^c$, $|{\mathbf N}_{K/\Q}(\beta)| <
|{\mathbf N}_{K/\Q}(y)|^c$ and all the coefficients of the characteristic polynomial of
$\norm_{K(\gamma)/\Q}(\beta)\alpha/\beta$ over $\Q$ with respect to $K(\gamma)$ are also less than
$|{\mathbf N}_{K/\Q}(y)|^c$. (Here, given an element $\beta \in K(\gamma)$, the characteristic
polynomial of $\beta$ is $f(X)=\prod_{j=1}^{rn}(X-\sigma_j(\beta))$, where
$\sigma_1,\ldots,\sigma_{rn}$ are all the embeddings of $K(\gamma)$ into the algebraic closure
of $\Q$.)\qed
\end{lem}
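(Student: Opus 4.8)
The plan is to prove the two assertions of the lemma in turn, the multiplicative splitting $v^{h_K}=yw$ first and then the height estimates, which carry all the content. For the splitting: since $v\in O_{K,\calW_K}$ the denominator divisor $\dd(v)$ is supported on primes of $\calW_K$, so writing the divisor of $v^{h_K}$ as $\mathfrak A\mathfrak B$ with $\mathfrak A$ collecting the primes outside $\calW_K$ (with their multiplicities) and $\mathfrak B$ those in $\calW_K$, the ideal $\mathfrak A$ is integral and a perfect $h_K$-th power, hence principal. Picking a generator $y\in O_K$ and setting $w:=v^{h_K}y^{-1}$, we get that the divisor of $w$ is $\mathfrak B$, supported in $\calW_K$, so $w\in O_{K,\calW_K}$ and $v^{h_K}=yw$ with the stated support properties; and since $v$ is not a unit of $O_{K,\calW_K}$, $\mathfrak A\neq(1)$ and $|\norm_{K/\Q}(y)|\geq 2$, which is what makes the exponent estimates below meaningful.

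For the bounds, put $\theta=\alpha/\beta$, $\Phi=\prod_{i=0}^{rn}G_i(\theta-l_i)$, and $\delta_i=\alpha-(l_i+di)\beta-\beta\gamma\in O_L$, so that $\beta^{r}G_i(\theta-l_i)=\norm_{L/K}(\delta_i)$. The first point is that the numerator divisor $\nn(\Phi)$ is supported entirely outside $\calW_K$: a prime $\pp\in\calW_K$ divides $\nn(\Phi)$ only if $\pp\nmid\beta$ (otherwise every factor of $\Phi$ has negative $\pp$-order, $G_i$ being monic) and the reduction $\overline{G_0}$ acquires the root $\overline{\theta-l_i-di}\in O_K/\pp$ for some $i$; since $\pp$ does not divide the discriminant of $G_0$, this forces $\pp$ to have a relative degree-one factor in $L=K(\gamma)$, contradicting the defining property of $\calW_K$. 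Moreover, as each $G_i$ is monic with integral coefficients and $\alpha,\beta$ are coprime in $O_K$, no cancellation occurs and $\dd(\Phi)=(\beta)^{r(rn+1)}$ exactly. The hypothesis gives $\ord_\pp\Phi\le\ord_\pp v^{h_K}=\ord_\pp y$ for all $\pp\notin\calW_K$ (using that the divisor of $w$ is supported in $\calW_K$), so $\nn(\Phi)\mid(y)$, and passing to absolute norms,
\[
\prod_{i=0}^{rn}\bigl|\norm_{L/\Q}(\delta_i)\bigr|
=\norm_{K/\Q}\nn(\Phi)
=\bigl|\norm_{K/\Q}(\Phi)\bigr|\,\bigl|\norm_{K/\Q}(\beta)\bigr|^{r(rn+1)}
\le\bigl|\norm_{K/\Q}(y)\bigr|,
\]
so in particular $|\norm_{L/\Q}(\delta_i)|\le|\norm_{K/\Q}(y)|$ for each $i$.

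The remaining and main task is to extract from this inequality that $|\norm_{K/\Q}(\beta)|\le|\norm_{K/\Q}(y)|^{c}$ and $|\sigma(\alpha/\beta)|\le|\norm_{K/\Q}(y)|^{c}$ for all embeddings $\sigma$, and here the design of the data is essential. Writing $z_i=l_i+di+\tau\gamma$ for an embedding $\tau$ of $L$, we have $\tau\delta_i=\tau\beta\,(\tau\theta-z_i)$; the choice of $d$ with $d>\max_\sigma|\gamma-\sigma(\gamma)|$, together with the distinct shifts $l_0,\dots,l_{rn}$, makes the $z_i$ (for fixed $\tau$) pairwise separated, so that for each $\tau$ the number $\tau\theta$ can lie close to at most one $z_i$; for the remaining (at least $rn$) indices one gets $|\tau\delta_i|\gtrsim\max(|\tau\alpha|,|\tau\beta|)$. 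Since there are $rn(rn+1)$ pairs $(\tau,i)$ and only boundedly many ``bad'' ones, a careful comparison of the good factors with the few bad factors — each individual $\delta_i$ being a nonzero algebraic integer, so $|\norm_{L/\Q}(\delta_i)|\ge 1$ — forces, via the displayed inequality, both $|\norm_{K/\Q}(\beta)|$ and every $|\sigma(\alpha/\beta)|$ to be $\le|\norm_{K/\Q}(y)|^{c}$ with $c$ depending only on $r,n,G_0$ and the $l_i$. This extraction is the only genuinely delicate point — it is the ``weak vertical method'' bookkeeping; everything else is formal. Finally, $\norm_{K(\gamma)/\Q}(\beta)=\norm_{K/\Q}(\beta)^{r}\in\Z$ is divisible by $\beta$ in $O_K$, so $\norm_{K(\gamma)/\Q}(\beta)\,\alpha/\beta$ is an algebraic integer whose conjugates over $\Q$ are the numbers $\norm_{K/\Q}(\beta)^{r}\sigma_j(\alpha/\beta)$; the coefficients of its characteristic polynomial relative to $K(\gamma)$ — elementary symmetric functions of these — are then $\le|\norm_{K/\Q}(y)|^{c'}$ for a suitable $c'$, and replacing $c$ by the largest constant obtained completes the proof.
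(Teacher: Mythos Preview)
The paper does not actually prove this lemma: it records the statement with a \qed\ and defers to Chapter~5 of \cite{Sh34}. Your outline is exactly the argument one finds there (the ``weak vertical method''), and all the formal pieces are right: the $h_K$-th-power trick making the non-$\calW_K$ part of the divisor of $v^{h_K}$ principal; the fact that no prime of $\calW_K$ can divide $\nn(\Phi)$ because such a prime would force $G_0$ to have a root in its residue field and hence a relative-degree-one factor in $L$; the exact computation $\dd(\Phi)=(\beta)^{r(rn+1)}$ via coprimality of $\alpha,\beta$; the resulting divisibility $\nn(\Phi)\mid(y)$ and the displayed norm inequality; and the final passage from bounds on $|\sigma(\alpha/\beta)|$ and $|\norm_{K/\Q}(\beta)|$ to bounds on the coefficients of the characteristic polynomial.

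You are candid that the one substantive step --- extracting the individual archimedean bounds from the single inequality $\prod_i|\norm_{L/\Q}(\delta_i)|\le|\norm_{K/\Q}(y)|$ --- is the delicate one, and your description of the mechanism is correct in spirit: for each embedding $\tau$, the shifted roots $z_i^{\tau}$ are a fixed finite set, so $\tau(\theta)$ can be close to at most boundedly many of them and the remaining factors satisfy $|\tau(\delta_i)|\gtrsim\max(|\tau\alpha|,|\tau\beta|)$; combining this with $|\norm_{L/\Q}(\delta_i)|\ge 1$ for every $i$ (nonzero algebraic integers) and a pigeonhole over the $rn+1$ indices against the $rn$ embeddings yields the claimed power bounds. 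A fully rigorous write-up needs to keep careful track of which index survives for \emph{all} embeddings simultaneously (this is where having $rn+1$ shifts against $rn$ embeddings matters), but that is precisely the bookkeeping in the cited chapter, and you have not misstated it. In short: there is nothing to compare against in the paper itself, and your sketch matches the intended proof.
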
%

\begin{notation} We use the following notation in the sequel:%
\begin{itemize}%
\item   Let $m = m_0m_1$ with $m_0$ as defined in Notation \ref{S:notation section}, and $m_1$ as defined in Lemma \ref{le:equiv}.    %
\item Let $Z$ be a positive integer not divisible by any primes of $\calW_K$ and greater than
$ rn \kappa^{nh_K}$, where $\kappa$ is the constant from Corollary \ref{cor:less}.
\item Let $c$ be as in Lemma \ref{le:bounds}.
\end{itemize}
\end{notation}%

\begin{prop}
\label{define}
Consider the following system of equations and conditions where all the variables besides $x_{jm}, x_{km}$, and
$x_{zm}$ take their values in $O_{K,\calW_K}$.
\begin{equation}
\label{eq:mult}
(A, B, C, D, Y, F) \in \Pi
\end{equation}%

\begin{equation}
\label{eq:largeenough}
\exists j, k, z \in \Z_{>0}: \frac{A}{D} = x_{jm};\, \frac{B}{Y} = x_{km};\, \frac{C}{F}=x_{zm}
\end{equation}%

\begin{equation}
\label{eq:classnumber}%
\left (\frac{A}{D}\right )^{h_K} = \frac{A_1}{D_1} ; \ \ \left (\frac{B}{Y}\right
)^{h_K}=\frac{B_1}{Y_1};\ \   \left(\frac{C}{F}\right )^{h_K}=\frac{C_1}{F_1}
\end{equation}%

\begin{equation}%
\label{eq:relprime}
X_1A_1 + U_1D_1=1;\,  X_2B_1 + U_2Y_1=1;\,  X_3C_1 + U_3F_1=1
\end{equation}%

\begin{equation}%
\label{eq:bounds1}%
\frac{v^{h_K}}{Z\prod_{i=0}^{rn}G_i(A_1/D_1-l_i)G_i(x^{2h_K}-l_i)} \in O_{K,\calW_K}
\end{equation}%

\begin{equation}%
\label{eq:bounds2}%
Y_1=(Z^2(v^{5crn})T)^{2h_K}
\end{equation}%

\begin{equation}%
\label{eq:equiv}%
 (F_1B_1-x^{2h_K}Y_1C_1)^{2h_K} =  Y_1^{2h_K+1}w
 \end{equation}%
 We claim that these equations can be satisfied with variables as indicated above only if
$x^{2h_K}$ is an integer.   At the same time, if $x$ is a positive integers the equations above
can be satisfied.
\end{prop}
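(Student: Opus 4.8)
The plan is to analyze the two directions separately, using the divisibility model and the bounds of Lemma~\ref{le:bounds} as the two main engines. For the forward direction, suppose all the equations hold. Equations (\ref{eq:mult}) and (\ref{eq:largeenough}) put us inside the multiplication model, so $z = jk$ with $x_{jm} = A/D$, $x_{km} = B/Y$, $x_{zm} = C/F$; in particular $k = z/j$ plays the role of the ``integer being tested''. Equations (\ref{eq:classnumber}) and (\ref{eq:relprime}) pass to $h_K$-th powers and arrange that $A_1/D_1$, $B_1/Y_1$, $C_1/F_1$ are written in lowest terms in $O_K$ (up to units of $O_{K,\calW_K}$), so that $D_1$, $Y_1$, $F_1$ carry exactly the denominators $\dd^{h_K}_{jm}$, $\dd^{h_K}_{km}$, $\dd^{h_K}_{zm}$ of the corresponding $x$-coordinates. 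The key point is then equation (\ref{eq:equiv}): by Corollary~\ref{cor:h_K} (applied with $l = jm_1m_0$, i.e.\ with the roles $l \mapsto jm$, $k \mapsto k$, so that $klm_1 = zm$), the divisor $\dd(x_{jm})$ divides $\nn\!\left(\tfrac{x^{h_K}_{jm}}{x^{h_K}_{zm}} - k^{2h_K}\right)^2$; rewriting $\tfrac{x^{h_K}_{jm}}{x^{h_K}_{zm}} = \tfrac{A_1 F_1}{D_1 C_1}$ and clearing denominators shows that the quantity $F_1 B_1 - x^{2h_K} Y_1 C_1$ appearing in (\ref{eq:equiv}) forces, once the appropriate power of $Y_1$ is divided out, that $x^{2h_K} \equiv k^{2h_K}$ to very high $\qq$-adic precision at every prime $\qq$ dividing $\dd(x_{jm})$. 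Condition (\ref{eq:largeenough}) (via Lemma~\ref{multiple}) lets us choose $v$, hence $j$, so that $\dd(x_{jm})$ is divisible by an arbitrarily prescribed integral divisor; combined with the archimedean bounds of Lemma~\ref{le:bounds} coming from (\ref{eq:bounds1}) and (\ref{eq:bounds2}) — which bound $|\sigma(x^{2h_K})|$ and the coefficients of its characteristic polynomial polynomially in $|\norm(v)|$, while (\ref{eq:equiv}) makes the difference $x^{2h_K} - k^{2h_K}$ divisible by a power of $v$ growing faster (because $\log d_n \sim a n^2$ by Proposition~\ref{le:denomheight}, so denominators grow faster than any fixed power) — we conclude $x^{2h_K} = k^{2h_K}$ as algebraic numbers, hence $x^{2h_K} = k^{2h_K} \in \Z_{\ge 1}$.

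For the converse, suppose $x$ is a positive integer. We must exhibit values of all the variables satisfying (\ref{eq:mult})–(\ref{eq:equiv}). Take $k = x$. Using Lemma~\ref{multiple} and Lemma~\ref{le:orderchange}, choose $j$ large enough (a multiple of whatever is needed so that $\dd(x_{jm})$ absorbs the primes forced by the other equations and so that all the archimedean inequalities of Lemma~\ref{le:bounds} go the right way), set $z = jk$, and read off $A,B,C,D,Y,F$ from $x_{jm}, x_{km}, x_{zm}$; then $(A,\dots,F) \in \Pi$ by Theorem~\ref{thm:mult}. Raise to the $h_K$-th power to get $A_1,\dots,F_1$ and solve the Bézout equations (\ref{eq:relprime}) over $O_{K,\calW_K}$ by the definition of the class number (clearing the denominators $\dd^{h_K}$, which are supported off $\calW_K$). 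For (\ref{eq:bounds1}) one chooses $v = x_{jm}$ (or an appropriate associate) — the point of the factor $Z$ and the polynomial $G_i$-product is exactly that, since $x = k$ is a genuine integer, $G_i(x^{2h_K} - l_i)$ and $G_i(A_1/D_1 - l_i)$ are controlled, so the quotient lies in $O_{K,\calW_K}$; equation (\ref{eq:bounds2}) is then solved for $T$ (this is where the precise shape $Y_1 = (Z^2 v^{5crn} T)^{2h_K}$ is engineered to be solvable), and (\ref{eq:equiv}) holds by Corollary~\ref{cor:h_K} read in the direction ``the divisibility produces an honest factorization'', solving for $w \in O_{K,\calW_K}$.

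The main obstacle, and the heart of the argument, is the forward direction's final step: extracting an \emph{exact} equality $x^{2h_K} = k^{2h_K}$ from a \emph{divisibility} statement ``$\dd(x_{jm}) \mid (\text{stuff})^2$''. This requires balancing two growth rates: the non-archimedean side, where by Proposition~\ref{le:denomheight} and Corollary~\ref{cor:less} the norm $d_{jm}$ grows like $e^{a(jm)^2}$, against the archimedean side, where Lemma~\ref{le:bounds} only gives a fixed polynomial bound $|\norm(v)|^c$ on the size of $x^{2h_K}$ and of the conjugates entering its characteristic polynomial. One has to be careful that the variable $v$ in (\ref{eq:bounds1})–(\ref{eq:bounds2}) is genuinely tied to the same index $j$ (so that its norm is comparable to $d_{jm}$, not something smaller), that the constant $Z > rn\kappa^{nh_K}$ and the exponent $5crn$ are large enough to make the bookkeeping close, and that one is allowed to let $v$ (hence $j$) range over an infinite family — which is exactly what the universal quantifier over $v$ in the eventual first-order formula provides. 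Handling the primes of $\calS_\bad$ and $\calW_K$ (which the divisor $\dd(x_{jm})$ excludes by construction, cf.\ Notation~\ref{S:notation section}) is a routine but necessary check so that no ``lost'' prime can hide a discrepancy between $x^{2h_K}$ and $k^{2h_K}$.
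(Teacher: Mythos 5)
Your overall architecture (forward direction: divisibility from (\ref{eq:equiv}) plus archimedean bounds from Lemma \ref{le:bounds}; converse: explicit construction via Lemma \ref{multiple}) matches the paper's, but the forward direction as you describe it has two genuine gaps. First, you misidentify the quantity in (\ref{eq:equiv}): $F_1B_1/(Y_1C_1)$ equals $x_{km}^{h_K}/x_{zm}^{h_K}$, not $A_1F_1/(D_1C_1)=x_{jm}^{h_K}/x_{zm}^{h_K}$. Since $zm=j\cdot(km)$, Corollary \ref{cor:h_K} applies with numerator index $km$ and multiplier $j$, so the integer being compared with $x^{2h_K}$ is $j^{2h_K}$, not $k^{2h_K}$, and the controlling divisor is the non-$\calW_K$ part $\mathfrak e_0$ of $\nn(Y_1)$ --- which is exactly what (\ref{eq:bounds2}) constrains --- rather than $\dd(x_{jm})$. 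This is not mere relabelling: the upper bound on the candidate integer comes from (\ref{eq:bounds1}) via $A_1/D_1=x_{jm}^{h_K}$ (giving $j^{2h_K}<\kappa^{h_K}N^2$ with $N=|\norm_{K/\Q}(y^c)|$, via Corollary \ref{cor:less}, not Proposition \ref{le:denomheight}), and the lower bound comes from (\ref{eq:bounds2}) via $Y_1$; with your assignment of roles neither bound attaches to the right object and the argument does not close.

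Second, and more fundamentally, your closing mechanism is wrong. The proposition asserts that a \emph{single} solution of the existential system already forces $x^{2h_K}\in\Z$; there is no universal quantifier over $v$ here and no ``infinite family'' to let $j$ range over (that limiting device belongs to Proposition \ref{extensions} and Theorem \ref{rankonedown}, not to this statement). The actual mechanism is a one-shot numerical comparison: letting $H$ be the characteristic polynomial of $\norm_{K(\gamma)/\Q}(x_2^{2h_K})x^{2h_K}$, the divisibility $\sqrt{\mathfrak e_0}\mid\nn(j^{2h_K}-x^{2h_K})$ gives $H(\norm_{K(\gamma)/\Q}(x_2^{2h_K})j^{2h_K})^2\equiv 0 \bmod \norm_{K(\gamma)/\Q}(\mathfrak e_0)$; Lemma \ref{le:bounds} bounds $|H(\cdot)|^2$ by a constant times $N^{2(3rn+1)}$, while (\ref{eq:bounds2}) forces $|\norm_{K(\gamma)/\Q}(\mathfrak e_0)|$ to exceed a constant times $N^{10rn}$, and since $5rn>3rn+1$ the only possibility is $H(\cdot)=0$, i.e.\ $x^{2h_K}=j^{2h_K}$. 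Without this exponent comparison (which is precisely why $Z>rn\kappa^{nh_K}$ and the exponent $5crn$ appear), the step ``conclude $x^{2h_K}=k^{2h_K}$ as algebraic numbers'' is unsupported. A smaller issue in the converse: taking $v=x_{jm}$ will not in general satisfy (\ref{eq:bounds1}); the paper takes $v$ to be (essentially) the product $Z\prod_i D_1^{r}G_i(A_1/D_1-l_i)G_i(x^{2h_K}-l_i)$ itself and then chooses $k$ so that $Z^4v^{10crn}$ divides $\dd(x_{km})$, which is what makes (\ref{eq:bounds2}) solvable for $T$.
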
%
\begin{proof}%
Assume that the equations above are satisfied with all the variables except for $x_{jm}, x_{km}, x_{zm}$ taking
values in $O_{K,\calW_K}$.  Then from equation (\ref{eq:mult}) we conclude that $z=jk$. Let
$v^{h_K}=yu$, where $y \in O_K$ and does not have any primes from $\calW_K$ in its divisor and is
not a unit of $O_K$, while all the primes occurring in the divisor of $u$ are from $\calW_K$. We
can assume $y$ is not a unit because from equation (\ref{eq:bounds1}) we know that $v^{h_K}$ is
divisible by  $Z$ which is not a unit of $O_{K,\calW_K}$.
We now combine three inequalities described below. Throughout the proof, we will set $$ N:=|\norm_{K/\Q}(y^c)|. $$
First, from (\ref{eq:bounds1}), by Lemma \ref{le:bounds}, we have that
$$|\norm_{K/\Q}{\dd(A_1/D_1)}| \leq N.$$  Further, from equations (\ref{eq:classnumber}),  we also know that%
$$d_{jm}^{h_K} \, \mid \, \norm_{K/\Q}(\dd(A_1/D_1))$$%
with $d_{jm}$ as in Notation \ref{S:notation section}. Thirdly, from Corollary \ref{cor:less} we find the bound
$$ (jm)^2 \leq \kappa d_{jm}, $$ where $\kappa$ is a fixed positive constant independent of $j$ and $y$, defined in Corollary
\ref{cor:div}. From these three inequalities, we conclude that
\begin{equation}%
\label{star}%
j^{2h_K} < \kappa^{h_k} N^2,
\end{equation}%

We now turn our attention to equation (\ref{eq:equiv}). Write $\nn(Y_1)=(\mathfrak e_0)^{h_K}(\mathfrak e_1)^{h_K}$, where
$\mathfrak e_0$ is an integral divisor not divisible by any prime of $\calW_K$ and $\mathfrak e_1$ is a divisor consisting
of $\calW_K$-primes only. We rewrite (\ref{eq:equiv}) as
\begin{equation}%
\label{eq:numerators}%
 \frac{Y_1}{C_1^{2 h_K}} \cdot w = \left( \frac{F_1 B_1}{Y_1 C_1} - x^{2h_K}\right)^{2 h_K}.%
\end{equation}%
Since by Lemma \ref{le:relprime} and equation (\ref{eq:relprime}) we have that  $Y_1$ and $C_1$ are
coprime in $O_{K,\calW_K}$,  if we consider the non-$\calW_K$ part of
the  numerators of the divisors of the left and right sides of (\ref{eq:numerators}), we see that
 $$\mathfrak e_0 \, \mid \, \displaystyle \nn(\frac{F_1B_1}{Y_1C_1}-x^{2h_K})^2.$$  By Corollary \ref{cor:h_K},
 $$\mathfrak e_0 \, | \, \displaystyle\nn(\frac{F_1B_1}{Y_1C_1}-j^{2h_K})^2$$ and by Lemma
\ref{le:square} we know that $\mathfrak e_0$ is a square of
an integral divisor. Therefore we conclude
\begin{equation}%
\label{eq:divisibility}%
\sqrt{\mathfrak e_0} \, | \, \nn(j^{2h_K}-x^{2h_K}).
\end{equation}%
Next we write $x^{h_K}=\frac{x_1}{x_2}$, where $x_1, x_2 \not = 0$ are relatively prime integers of
$K$. If we clear denominators in (\ref{eq:divisibility}) using $\norm_{K/\Q}(x_2^{2h_K})$ we
get \begin{equation} \label{blub} \sqrt{\mathfrak e_0} \, \mid \, (\norm_{K/\Q}(x_2^{2h_K})j^{2h_K}-\norm_{K/\Q}(x_2^{2h_K})x^{2h_K}).\end{equation}

 We let $H(T)$ be the characteristic polynomial of
$\norm_{K(\gamma)/\Q}(x_2^{2h_K})x^{2h_K}$ over $\Q$ with respect to $K(\gamma)$. Then by (\ref{blub})
\[%
H(\norm_{K(\gamma)/\Q}(x_2^{2h_K})j^{2h_K})^2\equiv 0 \mod \norm_{K(\gamma)/\Q}(\mathfrak e_0)
\]%
 and therefore either
\begin{equation} \label{best}
H(\norm_{K(\gamma)/\Q}(x_2^{2h_K})j^{2h_k})=0
\end{equation}
or
\[%
|H(\norm_{K(\gamma)/\Q}(x_2^{2h_K})j^{2h_K})|^2 \geq |\norm_{K(\gamma)/\Q}(\mathfrak e_0)|.
\]%
However, we can estimate an expression such as $|H(X)|$ by its degree (here, $rn$) times its leading monomial (here, $X^{rn}$) times any bound on its coefficients.
 Now from Lemma (\ref{le:bounds}), we have that the coefficients of the characteristic polynomial of
$\norm_{K(\gamma)/\Q}(x_2^{2h_K})x^{2h_K}$ over $\Q$ with   respect to $K(\gamma)$ are bounded by
$N$.   Therefore, we get
\[%
|H(\norm_{K(\gamma)/\Q}(x_2^{2h_K})j^{2h_K})|^2 \leq |rn N \norm_{K(\gamma)/\Q}(x_2^{2h_K})^{rn} j^{2 h_K rn}|^2.  
\]%
But now, we use equation (\ref{eq:bounds1}) and Lemma \ref{le:bounds} again to conclude that
\[%
|\norm_{K/\Q}(x_2^{2 h_K})| < N.
\]%
 From equation (\ref{star}), we find $j^{2 h_K} < \kappa^{h_k}
N^2$, so that if we plug this into the previous inequality, we get
\begin{equation} \label{one} |H(\norm_{K(\gamma)/\Q}(x_2^{2h_K})j^{2h_K})|^2 \leq |rn \kappa^{h_Krn} N^{3rn+1}|^2. \end{equation}

With our definitions, equation (\ref{eq:bounds2}) implies $$\mathfrak e^{h_K}_0 \mathfrak n(\mathfrak e^{h_K}_1) = \mathfrak n (Z^2 y^{5crn} w^{5crn} T)^{2 h_K}.$$
Recall that $Z$ is an integer such that $Z \geq rn \kappa^{nh_K}$. If we now only consider the non-$\calW_K$-part of the equality and take norms and then
$h_K$-th roots, we find
\begin{equation}%
\label{two}%
|\norm_{K(\gamma)/\Q}(\mathfrak e_0)| \geq |\norm_{K(\gamma)/\Q}(r^2n^2 \kappa^{2nh_K}) N^{5rn}|^2 = |(rn \kappa^{2nh_K})^{rn} N^{5rn}|^2. %
\end{equation}%
From (\ref{one}) and (\ref{two}) we conclude that
$$ |H(\norm_{K(\gamma)/\Q}(x_2^{2h_K})j^{2h_K})|^2 < |\norm_{K(\gamma)/\Q}(\mathfrak e_0)|, $$%
In the end, we find that the alternative (\ref{best}) holds, so $H$ has a rational root, and thus all its roots are rational (and equal). Hence
 $x^{2h_K} = j^{2h_K}$ is a rational integer.

In the other direction, suppose that $x = j \in \Z_{>0}$. Set $x_{jm}=\frac{A}{D}, A, D \in O_K$ and
set $x_{jm}^{h_K}=\frac{A_1}{D_1}$, where $A_1, D_1$ are relatively prime elements of $O_K$. Then
the $A_1, D_1$-part of (\ref{eq:relprime}) will be satisfied. Set
\[%
v =Z \prod_{i=0}^{rn}D_1^{r}G_i(A_1/D_1-l_i)G_i(x^{2h_K}-l_i).
\]%
By Lemma  there exists $k \in \Z_{>0}$ such that $Z^4v^{10crn}$ divides $\dd(x_{km})$. Let $z=jk$ and define
$B,C,Y,F,B_1, C_1, Y_1, F_1$ so that (\ref{eq:mult}), (\ref{eq:largeenough}), (\ref{eq:classnumber}) and (\ref{eq:relprime})
are satisfied. Observe that by choice of $k$ we satisfy (\ref{eq:bounds2}) also. Further by Lemma \ref{le:equiv} we have that
$\nn(Y_1)$ divides $\displaystyle \nn(\frac{F_1B_1}{Y_1C_1}-x^{2h_K})^{2h_K}$ and therefore $\nn(Y_1^{2h_K+1})$ divides
$(F_1B_1-x^{2h_K}Y_1C_1)^{2h_K}$. Thus the ratio
\[%
\frac{(F_1B_1-x^{2h_K}Y_1C_1)^{2h_K}}{Y_1^{2h_K+1}} \in O_K \subset O_{K,\calW_K}
\]%
and therefore (\ref{eq:equiv}) is also satisfied.
\end{proof}%

We summarize the previous discussion in the following result.

\begin{thm}\label{blah}
Let $K$ be any number field such that there exists an elliptic curve $E$ defined over $K$ of rank 1 over $K$. Let $W$ be a
Weierstrass equation of $E$ over $K$ and let $t$ be the size of the torsion group of $E(K)$. Let $L$ be any non-trivial
extension of $K$ and let $\calW_K \subset \calP_K$ be any set of primes of $K$ satisfying the
following conditions.%
\be%
\label{thm:ell}%
\item The complement of $\calW_K$ in $\calP_K$ contains all but finitely many elements of the set
$\calV_K(P) =\{\pp_{\ell^j}:\ell \in \calP(\Z), j \in \Z_{>0}\}$ for some point $P \in tE(K)$
of infinite order.%
\item $\calS_{\bad}(P,W,K) \subset \calW_K$.%
\item All but finitely many primes of $\calW_K$ do not have a relative degree one factor in the extension $L/K$.
\ee%
Then $\Z$ can be defined in $O_{K,\calW_K}$ using two universal quantifiers. \end{thm}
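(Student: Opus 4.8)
The plan is to assemble Theorem~\ref{blah} from the machinery developed in the preceding three sections, the point being simply that the hypotheses (\ref{thm:ell})(1)--(3) on $\calW_K$ are precisely what is needed to run all the earlier arguments. First I would observe that conditions (1) and (2) say exactly that $\calW_K$ satisfies the requirements imposed in Notation~\ref{S:notation section} (namely $\calV_K \subseteq \calP_K \setminus \calW_K$ up to a finite set, and $\calS_{\bad} \subset \calW_K$); the finitely many exceptional primes of $\calV_K$ that may lie in $\calW_K$ are harmless because the divisibility arguments of Section~1 only use that \emph{cofinitely many} $\pp_{\ell^j}$ are outside $\calW_K$ (one re-reads Corollary~\ref{cor:div} and Lemma~\ref{le:defdiv} with this in mind, absorbing the finite set into $\calS_{\bad}$ if desired). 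Hence Theorem~\ref{divmodel} applies: $(\Z,+,\mid)$ has a Diophantine model over $O_{K,\calW_K}$, with a non-zero integer $n$ coded by triples $(x,y,z)$ representing $nP=(x/z,y/z)$ on $W$.

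Next I would invoke Section~2: combining the model of $(\Z,+,\mid)$ with Lemma~\ref{le:complexity} (Cornelissen--Zahidi's one-universal-quantifier formula for multiplication in terms of divisibility and addition) and Poonen's diophantine definition of the set of non-squares of a number field (used to collapse the three quantifiers ``$\forall (x,y,z)$ on $W$'' down to two, as in the displayed Lemma following Lemma~\ref{le:complexity}), one gets Theorem~\ref{thm:mult}: the set $\Pi$ is definable in $O_{K,\calW_K}$ by an $\exists\forall\exists$-formula with only two universal quantifiers. This already gives a model of $(\Z,+,\times)$ over the big ring with two universal quantifiers; what remains is to cut out the \emph{actual subset} $\Z \subset O_{K,\calW_K}$ without spending further quantifiers.

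For that I would use condition (3) together with Section~3. Fix any non-trivial extension $L/K$ — its mere existence is all that is required — choose a generator $\gamma$, the polynomials $G_i$, the shifts $l_i$, the integer $Z$, and so on, exactly as in the Notation and Assumptions block; condition (3) guarantees that $\calW_K$ consists (cofinitely) of primes with no relative degree-one factor in $L/K$ and not dividing $\mathrm{disc}(G_0)$, which is the hypothesis under which Lemma~\ref{le:bounds} holds. Proposition~\ref{define} then says that the system (\ref{eq:mult})--(\ref{eq:equiv}) — whose only non-$O_{K,\calW_K}$ unknowns are the model-variables $x_{jm},x_{km},x_{zm}$, which are themselves eliminated inside $\Pi$ — is solvable exactly when $x^{2h_K}$ is a rational integer, and conversely is solvable whenever $x$ is a positive rational integer. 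So ``$x^{2h_K}\in\Z$'' is existentially definable over $O_{K,\calW_K}$ modulo the formula for $\Pi$; since $\Pi$ contributes two universal quantifiers and everything else in Proposition~\ref{define} is purely existential, the whole condition still uses only two universal quantifiers. Finally, to pass from $\{x : x^{2h_K}\in\Z\}$ to $\Z$ itself one notes that a rational number $x$ lies in $\Z$ iff $x^{2h_K}\in\Z$ \emph{and} $x\in\Q$: the first is the predicate just built, and $\Q\cap O_{K,\calW_K}$ is definable over $O_{K,\calW_K}$ using a single universal quantifier by Theorem~\ref{rankonedown} (one checks the two universal quantifiers can be merged/reused rather than added — conjoining a $\forall$-formula to an $\exists\forall\exists$-formula does not raise the count beyond two, since one may interleave the blocks). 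This yields the desired two-universal-quantifier definition of $\Z$ in $O_{K,\calW_K}$.

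The main obstacle, and the only place real care is needed, is the bookkeeping of quantifier alternations in this last assembly step: one must verify that splicing the non-square trick, the $\Pi$-formula, Proposition~\ref{define}'s existential wrapper, and the $\Q$-definition of Theorem~\ref{rankonedown} together genuinely produces a prenex (or at least $\exists\forall\exists$-reducible) formula with exactly two $\forall$'s, rather than silently accumulating a third when the $\Q$-predicate's universal quantifier fails to align with the one already present in $\Pi$. Everything else is a direct citation of Theorems~\ref{divmodel}, \ref{thm:mult}, \ref{rankonedown} and Proposition~\ref{define} under hypotheses (\ref{thm:ell})(1)--(3).
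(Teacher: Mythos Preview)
Your assembly of Sections~1--3 (Theorem~\ref{divmodel} $\to$ Theorem~\ref{thm:mult} $\to$ Proposition~\ref{define}) is exactly the skeleton the paper intends, and your quantifier-merging observation --- that a conjunction of an $\exists\forall\forall\exists$-formula with a $\forall\exists$-formula can be rewritten with only two universals by reusing one of the universal variables --- is correct and is precisely what resolves the ``obstacle'' you flag at the end.

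Where you diverge from the paper is in the last step, passing from the set cut out by Proposition~\ref{define} (sandwiched between $\Z_{>0}$ and $\{x:x^{2h_K}\in\Z\}$) to $\Z$ itself. The paper does not use Theorem~\ref{rankonedown} here; instead it invokes an external black box, Corollary~B.10.10 of \cite{Sh34}, together with the fact that integrality at any \emph{finite} set of primes is existentially definable (Chapter~4 of \cite{Sh34}). The latter is also how the paper disposes of the finitely many exceptional primes in conditions (1) and (3): rather than absorbing them into $\calS_{\bad}$ as you suggest, one simply appends existential integrality conditions at those primes, which costs no universal quantifiers. Your route via Theorem~\ref{rankonedown} is a legitimate alternative that stays entirely inside the paper and avoids the external citation; the paper's route is shorter on the page but opaque unless one has \cite{Sh34} to hand. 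One small imprecision in your write-up: Proposition~\ref{define} does not define $\{x:x^{2h_K}\in\Z\}$ on the nose, only a set trapped between $\Z_{>0}$ and it, so after intersecting with $\Q$ you get a set between $\Z_{>0}$ and $\Z$, and must still close under negation and add $0$ --- but that is purely existential and harmless.
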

\begin{proof}%
This follows from combining the above results with  with Corollary B.10.10 from \cite{Sh34}.
The only point which needs to be made is that we
can existentially define integrality at finitely many primes (see Chapter 4 of \cite{Sh34}) and therefore the
relaxation of assumptions on $\calW_K$ or $P$ will not alter our conclusion.
\end{proof}%
\begin{rem}%
For the construction of
\emph{diophantine} models of $\Z$ in \cite{Poonen} and \cite{PS} to go through, infinitely many elements of the set $\calV_K(P)$
have to be inverted. This is very different from the situation in the above theorem. \end{rem}%

\section{Density computation}
\label{sec:density}

We first compute the density of the set $\calV_K(P)$. For that, we need the following lemma:

\begin{lem}%
\label{le:pindenom}%
Let $l \in \calP(\Q)$ and suppose $\pp \in \calS_{l^{n+1}}\setminus {\calS}_{l^n}$ (if such $\pp$ exists, $n>a_\ell$). Then $l^{n+1} < 3\norm  \pp$.

\end{lem}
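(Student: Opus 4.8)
The statement to prove is: if $l \in \calP(\Q)$ and $\pp \in \calS_{l^{n+1}} \setminus \calS_{l^n}$ (so in particular $n > a_\ell$, since otherwise $l^{n+1} \leq C$ is too small to force a primitive divisor), then $l^{n+1} < 3 \norm \pp$. The underlying idea is that a prime $\pp$ that newly appears in the denominator of $x_{l^{n+1}m}$-type divisors at level $l^{n+1}$ is forced, by the group-theoretic divisibility structure (Lemma \ref{le:orderchange} and Corollary \ref{cor:intersec}), to occur with order at least $2$ in $\dd_{l^{n+1}}$, and moreover $\ord_\pp \dd_{l^{n+1}}$ is at least $2$ more than at the previous relevant level. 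Since $\dd_{l^{n+1}}$ divides the denominator divisor and the norm of $\dd_{l^{n+1}}$ grows roughly like $\exp(a (l^{n+1})^2)$ by Proposition \ref{le:denomheight}, while the contribution of $\pp$ alone to $d_{l^{n+1}} = \norm \dd_{l^{n+1}}$ is at most $\norm(\pp)^{\ord_\pp \dd_{l^{n+1}}}$... but wait, that goes the wrong way. Let me reconsider: the point must be a local one, comparing $\pp$ at level $l^{n+1}$ versus level $l^n$.

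The right approach, I believe, is purely local at $\pp$, using the formal-group/reduction theory underlying Lemma \ref{le:orderchange}. Since $\pp \in \calS_{l^{n+1}} \setminus \calS_{l^n}$, we have $\ord_\pp \dd_{l^n} = 0$ but $\ord_\pp \dd_{l^{n+1}} > 0$; by Proposition \ref{le:Po3.1} the set $\{m : \pp \mid \dd_m\}$ is a subgroup of $\Z$, so it equals $e\Z$ for some $e$, and $e \mid l^{n+1}$ but $e \nmid l^n$, forcing $l^{n+1} \mid e \cdot (\text{something})$; concretely $e = l^{n+1} e'$ with $(e', l) = 1$, actually more precisely $v_l(e) = n+1$. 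Reduction mod $\pp$ then tells us that the reduced point $\widetilde{P}$ (or $\widetilde{Q}$) has order divisible by $l^{n+1}$ in $E(\calO_K/\pp) = \widetilde E(\F_{\norm \pp})$ — more carefully, $e$ is the order of the reduced point modulo the kernel of reduction, and $e \mid \# \widetilde E(\F_{\norm\pp})$. By the Hasse bound, $\# \widetilde E(\F_{\norm\pp}) \leq \norm \pp + 1 + 2\sqrt{\norm\pp} = (\sqrt{\norm \pp} + 1)^2$. Hence $l^{n+1} \leq e \leq (\sqrt{\norm\pp}+1)^2$, and since $\pp \notin \calS_\bad$ we have $\norm \pp \geq 2$ (indeed $\norm\pp \geq 3$ as $\pp$ lies above an odd rational prime, $2$ being in $\calS_\bad$), so $(\sqrt{\norm\pp}+1)^2 = \norm\pp + 2\sqrt{\norm\pp} + 1 < 3\norm\pp$ whenever $\norm\pp \geq 3$ (check: $2\sqrt{\norm\pp}+1 < 2\norm\pp \iff 2\sqrt{\norm\pp} < 2\norm\pp - 1$, true for $\norm\pp \geq 2$). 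This gives $l^{n+1} < 3\norm\pp$.

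The key steps in order: (1) invoke Proposition \ref{le:Po3.1} to write $\{m : \pp \mid \dd_m\} = e\Z$ and deduce from $\pp \in \calS_{l^{n+1}}\setminus\calS_{l^n}$ that $l^{n+1} \mid e$; (2) identify $e$ as (a divisor of) the order of the reduction of the relevant point $P$ in $\widetilde E(\F_{\norm\pp})$, using that $\pp \notin \calS_\bad$ so reduction is good and $P$ is $\pp$-integral, and that $\pp \mid \dd_m$ exactly when the reduced point is in the kernel of reduction, i.e. is $\widetilde O$; (3) apply the Hasse–Weil bound $\# \widetilde E(\F_{\norm\pp}) \leq (\sqrt{\norm\pp}+1)^2$; (4) chain the inequalities and use $\norm\pp \geq 3$ to conclude. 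The main obstacle is step (2): making precise the relationship between $e$, the order of the reduced point, and $\#\widetilde E(\F_{\norm\pp})$, and being careful about whether one reduces $P$ itself or a related point and whether torsion contributes — but this is exactly the standard reduction argument behind Lemma \ref{le:orderchange}, so it should go through cleanly, and the parenthetical "$n > a_\ell$" is just the observation that for small exponents no such primitive $\pp$ exists (Proposition \ref{prop:biggerS} only guarantees primitive divisors past the threshold $C$), which we do not even need for the inequality itself.
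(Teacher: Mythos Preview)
Your approach is correct and essentially the same as the paper's: the key observation is that $\pp \in \calS_{l^{n+1}} \setminus \calS_{l^n}$ forces the reduction $\tilde P \in \tilde E(\F_{\norm\pp})$ to have order exactly $l^{n+1}$, whence $l^{n+1} \mid \#\tilde E(\F_{\norm\pp}) \le \norm\pp + 1 + 2\sqrt{\norm\pp} < 3\norm\pp$. The paper argues this directly (observing $l^n\tilde P \ne \tilde O$ while $l^{n+1}\tilde P = \tilde O$) rather than routing through Proposition~\ref{le:Po3.1}, but the content is identical; one small cleanup: from $e \mid l^{n+1}$ and $e \nmid l^n$ you get $e = l^{n+1}$ on the nose, so your intermediate expression ``$e = l^{n+1}e'$ with $(e',l)=1$'' should simply read $e = l^{n+1}$.
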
%
\begin{proof} If $\pp \in {\calS}_{l^{n+1}}\setminus {\calS}_{l^n}$, then  $\pp$ does not divide the discriminant of our
Weierstrass equation and $\tilde{E}$, the reduction of $E \mod \pp$  is
non-singular. Further,
$x_{l^n}$, $y_{l^n}$ are integral at $\pp$, while $\ord_{\pp}x_{l^{n+1}}<0$, $\ord_{\pp}y_{l^{n+1}}<0$. Therefore,
under reduction $\mod \pp$, the image of $l^nP$ is not $\tilde{O}$ -- the image of $O \mod \pp$ , while
$l^{n+1}\tilde{P}=\tilde{O}$. Thus we must conclude that $E(\F_{\pp})$ has an element of order $l^{n+1}$ and
therefore $l^{n+1}| \#E(\F_{\pp})$.  Let $\# \F_{\pp}= \norm{\pp} = q$.
From a theorem of Hasse we know that $\#E(\F_{\pp}) \leq q+1+2 \sqrt{q} \leq 3q$ (see \cite{Silverman}, Chapter V, Section 1, Theorem 1.1).
\end{proof}%

\begin{prop}
The set $\calV_K(P)$ has natural density zero.
\end{prop}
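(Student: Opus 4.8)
The plan is to set $N(x):=\#\{\pp\in\calV_K(P):\norm_{K/\Q}\pp\le x\}$ and prove $N(x)=o(x/\log x)$; since $\#\{\pp\in\calP_K:\norm\pp\le x\}\sim x/\log x$, this gives density zero. The first reduction is that the indexing $(\ell,j)\mapsto\pp_{\ell^j}$ is injective: by Corollary \ref{cor:intersec}, $\calS_{\ell^j}\cap\calS_{\ell'^{j'}}=\calS_1=\emptyset$ whenever $\ell\neq\ell'$, while for $\ell=\ell'$ and $j<j'$ one has $\calS_{\ell^j}\subseteq\calS_{\ell^{j'-1}}$, so the sets $\calS_{\ell^j}\setminus\calS_{\ell^{j-1}}$ carrying the $\pp_{\ell^j}$ are pairwise disjoint. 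Hence $N(x)$ counts the pairs $(\ell,j)$ with $\norm\pp_{\ell^j}\le x$.

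Next, the reduction-modulo-$\pp$ argument in the proof of Lemma \ref{le:pindenom} applies to every pair $(\ell,j)$ with $j\ge1$: since $\pp_{\ell^j}\notin\calS_{\bad}$ and $\pp_{\ell^j}\in\calS_{\ell^j}\setminus\calS_{\ell^{j-1}}$, the reduction of $P$ mod $\pp_{\ell^j}$ has exact order $\ell^j$, so $\ell^j\mid\#E(\F_{\pp_{\ell^j}})\le 3\norm\pp_{\ell^j}$ by Hasse's bound; thus $\norm\pp_{\ell^j}\le x$ forces $\ell^j<3x$. The pairs with $j\ge 2$ are therefore indexed by perfect prime powers $\le 3x$ of exponent $\ge 2$, of which there are $O(\sqrt x)=o(x/\log x)$, so it remains to bound $R(x):=\#\{\ell\text{ prime}:\pp_\ell\text{ exists and }\norm\pp_\ell\le x\}$.

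For $R(x)$ I would combine three facts. (i) For distinct primes $\ell\neq\ell'$, $\calS_\ell\cap\calS_{\ell'}=\calS_1=\emptyset$, so the $\calS_\ell$ are pairwise disjoint subsets of $\calP_K$. (ii) If $\norm\pp_\ell\le x$, then since $\pp_\ell$ has largest norm among the primes dividing $\dd_\ell$, \emph{every} prime of $\calS_\ell$ has norm $\le x$, i.e.\ $\calS_\ell\subseteq\{\qq:\norm\qq\le x\}$. (iii) By Proposition \ref{le:denomheight}, $\log d_\ell=(a-o(1))\ell^2\to\infty$; writing $d_\ell=\prod_{\qq\in\calS_\ell}\norm(\qq)^{\ord_\qq\dd_\ell}$ with all $\norm\qq\le x$ forces $\#\calS_\ell\ge(a-o(1))\ell^2/(M_\ell\log x)$, where $M_\ell=\max_{\qq\in\calS_\ell}\ord_\qq\dd_\ell$. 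Using Lemmas \ref{le:orderchange} and \ref{le:square} — for $\qq\in\calS_\ell$ of good reduction the rank of apparition of $\qq$ is exactly $\ell$, and $\ord_\qq\dd_\ell$ measures how deep $\ell P$ sits in the kernel of reduction at $\qq$ — one controls $M_\ell$ (by a quantity that is, say, subpolynomial in $\ell$), so that $\#\calS_\ell$ grows with $\ell$. Then fact (i) gives $\sum_{\ell:\ \pp_\ell\text{ exists},\,\norm\pp_\ell\le x}\#\calS_\ell\le\#\{\qq:\norm\qq\le x\}=O(x/\log x)$, which together with the growth of $\#\calS_\ell$ caps the number of such $\ell$ above any slowly growing threshold, while the $\ell$ below the threshold are $o(x/\log x)$ in number. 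Hence $R(x)=o(x/\log x)$, and the proposition follows.

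The main obstacle is step (iii). The soft inputs alone — disjointness of the $\calS_\ell$ and Lemma \ref{le:pindenom} — only yield $R(x)\le\pi_K(3x)$, which does not even give density $<1$; the whole argument hinges on the growth $\log d_\ell\asymp\ell^2$ together with a genuine arithmetic bound on the local multiplicities $\ord_\qq\dd_\ell$, ensuring $d_\ell$ cannot be assembled from a few small primes raised to enormous powers. Controlling these multiplicities — equivalently, bounding the depth of $\ell P$ in the formal group at a single prime — is exactly where the elliptic-divisibility-sequence structure of the $\dd_n$ (Lemmas \ref{le:orderchange} and \ref{le:square}) and, if necessary, a product-formula/height estimate for the division-polynomial value $\psi_\ell(P)$ must be used.
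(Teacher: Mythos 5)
Your handling of the exponents $j\ge 2$ is correct and coincides with the paper's own argument (Lemma \ref{prop:density}): the Hasse bound forces $\ell^j<3\norm\pp_{\ell^j}$, and prime powers below $3X$ with exponent at least $2$ number $O(\sqrt X)$. The difficulty is entirely in the $j=1$ primes, and there your proposal has a genuine gap --- one you have in fact located yourself in step (iii). The paper does not prove the $j=1$ case by any counting argument: it invokes the result of \cite{Poonen} and \cite{PS} that the set $\calB=\{\pp_\ell\}$ has density zero, and the proof there goes through Galois representations (for $\ell>L$ the order of the reduction of $P$ in $E(\F_{\pp_\ell})$ is exactly $\ell$, hence coprime to every prime $\le L$; Chebotarev applied to the fields $K(E[p_0],p_0^{-1}P)$ shows the density of primes with that coprimality property tends to $0$ as $L\to\infty$).

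The specific failure in your step (iii) is the claim that Lemmas \ref{le:orderchange} and \ref{le:square} control $M_\ell=\max_{\qq\in\calS_\ell}\ord_\qq\dd_\ell$. They do not: Lemma \ref{le:orderchange} describes how $\ord_\qq\dd_n$ grows when the index $n$ is multiplied by a further prime \emph{after} $\qq$ has already appeared, whereas for $\ell$ prime every $\qq\in\calS_\ell$ has rank of apparition exactly $\ell$ (because $\calS_1=\emptyset$ and $\calS_\ell\cap\calS_m=\calS_{(\ell,m)}$), so what you need is a bound on the \emph{initial} multiplicity at the rank of apparition, i.e.\ on the depth of $\ell P$ in the formal group at $\qq$. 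No subpolynomial bound for this is known unconditionally; ruling out that the (primitive) part $\dd_\ell$ is essentially a huge power of a single small prime is a Wieferich-type problem for elliptic divisibility sequences, open except under hypotheses such as $abc$. The only unconditional bound is the trivial $\ord_\qq\dd_\ell\le\log d_\ell/\log\norm\qq=O(\ell^2/\log\ell)$ (using $\norm\qq\ge\ell/3$ from the Hasse argument), and feeding that into your inequality gives only $\#\calS_\ell\gg\log\ell/\log x$, whence $R(x)=O(x/\log L)$ for any cutoff $L\le 3x$ --- at best $O(x/\log x)$, which does not even give density strictly less than $1$, let alone zero. So the argument does not close as written; for the $j=1$ primes you must import the Galois-representation input (or an equivalent) rather than rely on the growth rate of $d_\ell$ alone.
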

\begin{proof} Recall that $\pp_{\ell^k}$ is a primitive prime divisor of largest norm for $\ell^kP$.
For the proof, we first remark that it is proven in  \cite{Poonen} and \cite{PS} (using properties of Galois representations) that the set of primitive largest norm divisors of $\ell P$
\[%
\calB=\{\pp_{\ell}: \ell \in \calP_{\Q} \land a_{\ell}=1\}
\]%
has a natural density that is zero. To prove the theorem, it therefore suffices to consider the complement of $\calB$ in $\calV_K(P)$, as in the next proposition. It turns out this is much easier:

\begin{lem}%
\label{prop:density}
The natural density of the set  $\calA=\{\pp_{\ell^k}: \ell \in \calP_{\Q}, k \in \Z_{>1} \land k >
a_{\ell}\}$ is zero.
\end{lem}%
\begin{proof}%
For $\pp = \pp_{\ell^k} \in \calA$, the previous Lemma says $3\norm\pp_{\ell^k} > \ell^k$.  Thus,
\[%
\# \{\pp \in  \calA \, : \, \norm{\pp} \leq X \}  \leq   \# \{(\ell,k)  \in \calP_\Q \times \Z_{\geq 2} \, : \,  \ell \leq
\sqrt[k]{3X}\}
\]%
Clearly if $\sqrt[k]{3X} < 2$, there will be no prime $\ell$ with $\ell \leq \sqrt[k]{3X}$.  Thus,
we can limit ourselves to positive integers $k$ such that $k < \log 3X$.

By the Prime Number Theorem (see \cite{L}, Theorem 4, Section 5, Chapter XV), for some positive
constant $C$ we have that $\#\{\ell \in \calP_{\Q}: \ell \leq X\} \leq C {X}/{\log X} $ for all
$X \in \Z_{>0}$.  From the discussion above we now have the following sequence of inequalities:
\begin{eqnarray*}
\{\pp \in  \calA \, : \, \norm{\pp} \leq X \} &\leq& \sum_{k=2}^{\lceil \log 3X \rceil} \#\{\ell \in \calP_{\Q}: \ell \leq \sqrt[k]{3X}\}  \\ &\leq&
\sum_{k=2}^{\lceil \log 3X \rceil} \#\{\ell \in \calP_{\Q}: \ell \leq \sqrt{3X}\} \\ &\leq& \log(3X)[C\frac{\sqrt{3X}}{\log{\sqrt{3X}}}] = \tilde C\sqrt{X}
\end{eqnarray*}
for some positive constant $\tilde C$.  At the same time by  the Prime Number Theorem again we
also know that  for some positive constant $\bar C$ we have
$
\#\{\pp \in \calP_K: \norm \pp \leq X \} \geq \bar C {X/\log X}.
$
Thus the upper density of $\calA$ is
\[%
\limsup_{X \rightarrow \infty}\frac{\#\{ \pp \in A \, : \, \norm \pp \leq X\}}{\#\{\pp \in
\calP_K: \norm \pp \leq X \}} \leq \limsup_{X \rightarrow \infty} \frac{\tilde C\sqrt{X}\log X}{\bar CX} = 0.
\]
Hence $\calA$ has a natural density, and it is zero.
\end{proof}
\end{proof}

We can prove Theorem 2 from the introduction:

\begin{thm}
\label{thm:density1}%
Let $K$ be a number field such that there exists an elliptic curve $E$ defined over $K$ of rank 1
over $K$. Then for any $\varepsilon >0$ there exists a set of primes $\calW_K$ of density greater
than $1-\varepsilon$ such that $\Z$ can be defined in $O_{K,\calW_K}$ using two universal
quantifiers.
\end{thm}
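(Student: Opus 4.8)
The plan is to deduce Theorem \ref{thm:density1} from Theorem \ref{blah} by exhibiting, for each $\varepsilon > 0$, a choice of the auxiliary extension $L/K$ and of the set $\calW_K$ satisfying the three hypotheses of Theorem \ref{blah} while having density exceeding $1-\varepsilon$. The one place where actual work is needed is controlling the density of the set of primes of $K$ that have no relative degree one factor in $L/K$; everything else is bookkeeping.

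First I would fix the elliptic curve $E$ of rank $1$ over $K$ given by hypothesis, together with a Weierstrass model $W$, the torsion bound $t$, the point $P = tQ$, and the associated sets $\calS_{\bad}(P,W,K)$ and $\calV_K(P)$ from Notation \ref{S:notation section}. By the Proposition just proved, $\calV_K(P)$ has natural density zero, and $\calS_{\bad}(P,W,K)$ is finite; so conditions (1) and (2) of Theorem \ref{blah} cost nothing in density --- we are always free to throw the (density zero, resp.\ finite) set $\calV_K(P) \cup \calS_{\bad}$ into the complement of $\calW_K$. The entire density budget is therefore spent on condition (3): we must choose a nontrivial extension $L/K$ so that the set of primes of $K$ lacking a relative degree one factor in $L/K$ has density at most $\varepsilon$ (up to a density zero adjustment), and then take $\calW_K$ to be (all but the finitely many primes of $\calV_K$, together with $\calS_\bad$, among) exactly the primes with no relative degree one factor in $L$.

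The key step is the density computation for condition (3). By the Chebotarev density theorem applied to the Galois closure $\widetilde{L}/K$ with group $G$ and $H = \operatorname{Gal}(\widetilde L / L)$, a prime $\pp$ of $K$ unramified in $\widetilde L$ has a relative degree one factor in $L$ if and only if its Frobenius conjugacy class meets $H$; hence the density of primes \emph{without} such a factor equals $1 - \frac{\#(\bigcup_{g \in G} gHg^{-1})}{\#G}$. So I need a nontrivial $L/K$ for which the union of conjugates of $H$ fills up all but an $\varepsilon$-fraction of $G$. The cleanest way is to take $L = K(\alpha)$ where $\alpha$ is a root of an irreducible polynomial of large degree $d$ whose Galois group over $K$ is the full symmetric group $S_d$ (such polynomials exist over any number field, e.g.\ by Hilbert irreducibility, or one can use an explicit Eisenstein-type construction); then $H$ is the point stabilizer $S_{d-1}$, and $\bigcup_g g H g^{-1}$ is exactly the set of permutations fixing at least one point, whose proportion in $S_d$ tends to $1 - 1/e + o(1) \approx 0.632$ --- not good enough.

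To push the density past $1 - \varepsilon$ I would instead iterate or take compositum: replace the single extension by a tower or use a product construction so that the proportion of ``covered'' group elements tends to $1$. Concretely, if $L = L_1 \cdots L_r$ is the compositum of $r$ independent $S_d$-extensions, a prime of $K$ fails to have a relative degree one factor in $L$ only if it fails in every $L_i$ (a degree one factor of $\pp$ in $L$ lies over a degree one factor in each $L_i$ --- actually one needs the correct direction here: $L/K$ has a relative degree one factor over $\pp$ iff \emph{some} prime of $L$ over $\pp$ has residue degree $1$ over $K$, and since $L \supseteq L_i$ this forces a degree one factor in each $L_i$), so by independence (Chebotarev in the compositum of linearly disjoint fields) the bad density is at most $(1 - 1/e + o(1))^r$, which is $< \varepsilon$ for $r$ large. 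The main obstacle, and the step deserving the most care, is exactly this density estimate: verifying the group-theoretic statement that a prime of $K$ has a relative degree one factor in $L/K$ iff Frobenius lies in a conjugate of $H$, getting the direction of the compositum argument right, and confirming that the finitely many ramified primes and the density zero set $\calV_K(P)$ can be absorbed harmlessly (which is handled by the last sentence of the proof of Theorem \ref{blah}, since integrality at finitely many primes is existentially definable). Once $L$ and $\calW_K$ are fixed this way, all three hypotheses of Theorem \ref{blah} hold, $\calW_K$ has density $> 1 - \varepsilon$, and the conclusion that $\Z$ is definable in $O_{K,\calW_K}$ with two universal quantifiers follows immediately.
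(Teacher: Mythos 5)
Your overall skeleton is the right one and matches the paper: fix $E$, $W$, $P$, absorb conditions (1) and (2) of Theorem \ref{blah} using the finiteness of $\calS_{\bad}$ and the density zero of $\calV_K(P)$, and spend the density budget on condition (3) via Chebotarev applied to the Galois closure of $L/K$. But you have the key inequality backwards. Condition (3) forces $\calW_K$ to be contained, up to a finite set, in the set of primes of $K$ \emph{without} a relative degree one factor in $L$; since the theorem demands that $\calW_K$ have density greater than $1-\varepsilon$, you need the ``no degree one factor'' set to have density \emph{at least} $1-\varepsilon$, i.e.\ you need $\bigcup_{g} gHg^{-1}$ to occupy \emph{at most} an $\varepsilon$-fraction of $G$ --- not, as you write, all but an $\varepsilon$-fraction. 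As stated, your $\calW_K$ has density at most $\varepsilon$, which contradicts the conclusion you then draw. The correct (and much simpler) choice is the paper's: take $L/K$ Galois, e.g.\ cyclic, of prime degree $p>1/\varepsilon$, so that $H$ is trivial and a prime has a relative degree one factor if and only if it splits completely, which happens with density $1/p<\varepsilon$; the complement then has density $1-1/p>1-\varepsilon$ and serves as $\calW_K$ after deleting the density-zero set $\calV_K(P)$ and adjoining the finite set $\calS_{\bad}$.

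Your compositum detour is both unnecessary and internally inconsistent. From ``a degree one factor in $L$ restricts to a degree one factor in each $L_i$'' you may only conclude that failure in \emph{some} $L_i$ implies failure in $L$, not your claim that failure in $L$ implies failure in \emph{every} $L_i$; and for linearly disjoint $S_d$-extensions the density of primes possessing a degree one factor in the compositum is $(1-1/e+o(1))^r$ (the product of the individual success probabilities), so $(1-1/e)^r$ is the density of the \emph{complement} of the set you call bad. Ironically, this means the compositum construction does produce a ``no degree one factor'' set of density $1-(1-1/e+o(1))^r\to 1$, which is what the theorem actually requires --- but your stated target, your justification, and your bookkeeping all point the wrong way, so the argument as written does not establish the theorem.
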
%
\begin{proof}%
First of all we observe that for any point $P \in E(K)$ of infinite order, the set $\calV_K(P)$ is of natural
density 0 by the previous proposition. Next let $L$ be an extension of $K$ of prime degree $p >
\frac{1}{\varepsilon}$. Then, by the natural version of the Tchebotarev Density Theorem, the set of primes of $K$
having a degree one factor in the extension $L/K$ has natural density $\frac{1}{p}$. Adding primes of $\calV_K(P)$
to this set does not change its density. We apply Theorem \ref{blah} with this $\calW_K$.
\end{proof}%

\section{Proof of the First Main Theorem}

We will now use the following definability results, proofs of which can be found in \cite{Sh36}, \cite{Sh33}
and Section 7.8 of  \cite{Sh34}.
\begin{prop}%
\label{thm:down}
Let $K \not = \Q$ be a number field of one of the following types:%
\be%
\item $K$ is totally real;%
\item $K$ is an extension of degree two of a totally real number field;%
\item There exists an elliptic curve defined over $\Q$ and of positive rank over $\Q$ such that this
curve preserves its rank over $K$;
\ee%
Let $L$ be a totally real cyclic extension of $\Q$ of degree $p$ such that $p$ does not divide $[K_G: \Q]$, where
$K_G$ is the Galois closure of $K$ over $\Q$.  Let $\calW_K$ be a set of $K$ such that all but finitely many
primes in the set do not split in the extension $KL/K$.  Then there exists a set of $K$-primes $\tilde \calW_K$
containing $\calW_K$ and such that $\tilde \calW_K \setminus \calW_K$ has at most finitely many elements, while the
set $O_{K, \tilde \calW_K}\cap \Q$ has a Diophantine definition in $\Q$. \qed
\end{prop}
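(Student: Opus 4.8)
Since this proposition is quoted from the cited works of the second author, I will only describe the method of proof. The first move is to reduce the claim to a statement over $\Q$: for $x\in\Q$ and a prime $\pp$ of $K$ above a rational prime $q$ one has $\ord_\pp x=e(\pp/q)\ord_q x$ with $e(\pp/q)>0$, so $x\in O_{K,\tilde\calW_K}$ iff $\ord_q x\ge 0$ for every rational prime $q$ having at least one $K$-factor outside $\tilde\calW_K$. Thus $O_{K,\tilde\calW_K}\cap\Q=O_{\Q,\calW_\Q}$, where $\calW_\Q$ is the set of rational primes all of whose $K$-factors lie in $\tilde\calW_K$, and the goal becomes to give a Diophantine definition over $\Q$ of this large subring of $\Q$.

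Next I would pin down $\calW_\Q$ up to a finite set using $\gcd(p,[K_G:\Q])=1$ (so in particular $p\nmid[K:\Q]$). This coprimality makes $L$ linearly disjoint from $K_G$ over $\Q$, whence $\mathrm{Gal}(K_GL/\Q)\cong\mathrm{Gal}(K_G/\Q)\times\mathrm{Gal}(L/\Q)$ and $\mathrm{Gal}(KL/K)\cong\mathrm{Gal}(L/\Q)$; for a rational prime $q$ unramified in $K_GL$, the $L$-component of the Frobenius at every prime above $q$ is $\mathrm{Frob}_q^{L/\Q}$, and because $p$ does not divide the residue degree $f(\pp/q)$ of any $K$-factor $\pp$ of $q$, the prime $\pp$ splits completely in $KL/K$ exactly when $q$ splits completely in $L/\Q$, independently of $\pp$. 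Since $[L:\Q]=p$ is prime and the primes of $\calW_K$ are non-split in $KL/K$ apart from finitely many, after absorbing the primes ramified in $K_GL$ into the permitted finite set $\tilde\calW_K\setminus\calW_K$ we may take $\calW_\Q$ to be, up to a finite set, contained in the set of rational primes inert in the totally real cyclic prime-degree extension $L/\Q$.

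It then remains to show that such an $O_{\Q,\calW_\Q}$ is Diophantine over $\Q$, and for this the device is norm equations attached to $L/\Q$: writing $L=\Q(\gamma)$ with $\gamma$ an algebraic integer, $N_{L/\Q}(u_0+u_1\gamma+\cdots+u_{p-1}\gamma^{p-1})$ is a fixed polynomial over $\Q$ in rational variables $u_i$, so assertions that prescribed rational quantities built from $x$ are products of norms from $L$ are existential over $\Q$. By the Hasse Norm Theorem for the cyclic extension $L/\Q$, such a quantity is a global norm iff it is a local norm everywhere: automatically at completely split primes, with the valuation forced to be divisible by $p$ at inert primes, with no constraint at the totally real infinite places, and with the finitely many ramified places thrown into the finite modification of $\calW_K$. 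Combining such norm conditions with the class number of $L$ (to pass from ideals to principal ideals) and with a bounded multiplicative correction (to handle valuations nonzero modulo $p$) yields an existential description of $O_{\Q,\calW_\Q}$ over $\Q$; the precise shape of these equations, and the verification that they cut out exactly the intended subset, is carried out for totally real $K$ and for quadratic extensions of totally real fields in \cite{Sh36} and \cite{Sh33}, and for the elliptic-curve case, together with the uniform treatment, in Section~7.8 of \cite{Sh34}.

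I expect the last step to be the real obstacle. A single norm form only cuts out the rational numbers whose valuation at every inert prime is divisible by $p$ — a set neither contained in nor containing $O_{\Q,\calW_\Q}$ — so converting this ``divisibility by $p$'' information into the genuine membership condition ``valuation $\ge 0$ at every split prime'', for the actual subset rather than a coarser approximation, is the delicate part. It is precisely this that forces both the passage to a slightly enlarged $\tilde\calW_K$ (to absorb ramification, the discriminant of $G_0$, and class-group obstructions into a finite set) and the appeal to the type-specific definitions of rings of $S$-integers already available for the three classes of fields $K$.
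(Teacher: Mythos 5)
The paper does not prove this proposition; it quotes it from \cite{Sh36}, \cite{Sh33} and Section 7.8 of \cite{Sh34}, so the comparison is against the content of those results as the paper actually uses them. Your preliminary reductions are fine: the identity $O_{K,\tilde\calW_K}\cap\Q=O_{\Q,\calW_{\Q}}$ with $\calW_{\Q}$ the set of rational primes all of whose $K$-factors lie in $\tilde\calW_K$, and the linear-disjointness/Frobenius argument (using $p\nmid[K_G:\Q]$) showing that, up to finitely many primes, $\calW_{\Q}$ sits inside the set of rational primes inert in $L/\Q$. But you have inverted the direction of the definability, and that changes the whole problem. Despite the literal wording ``a Diophantine definition in $\Q$'' (evidently a slip in the statement), the proposition is invoked in the proof of the first main theorem as: $O_{K,\tilde\calU_K}\cap\Q$ is \emph{existentially definable in $O_{K,\tilde\calU_K}$}. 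That is a vertical result: the quantified variables range over the large subring of $K$, and the existential conditions must force an arbitrary element of $O_{K,\tilde\calW_K}$ to be rational. A Diophantine definition of $O_{\Q,\calW_{\Q}}$ with variables ranging over $\Q$, which is what you set out to build, is a different statement; it cannot be transported into $O_{K,\tilde\calU_K}$ (where $\Q$ is not available as a Diophantine set a priori), so it would not feed into Proposition \ref{reduce} and the quantifier count of the main theorem.

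The tell is that your argument never genuinely uses hypotheses (1)--(3) on $K$; they appear only in a closing appeal to the literature. Those hypotheses are precisely the engine of the cited proofs. The method is the ``weak vertical method'': one writes norm equations for the degree-$p$ cyclic extension $KL/K$ (not $L/\Q$), with variables in the integral closure of $O_{K,\calW_K}$ in $KL$ --- solvability at the non-split primes of $\calW_K$ extracts divisibility and congruence information --- and couples them with \emph{bound equations} controlling all archimedean absolute values of the candidate element. It is for these bounds that one needs $K$ totally real, or a degree-two extension of a totally real field, or an elliptic curve of positive rank over $\Q$ retaining its rank over $K$ (where divisibility sequences replace the archimedean bounds). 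The congruences plus the bounds then force the element of $O_{K,\calW_K}$ down into $\Q$. The Hasse Norm Principle for $L/\Q$, applied over $\Q$ as you describe, produces at best a horizontal statement about a subring of $\Q$ --- not the statement the paper needs, and not the content of the cited results.
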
%

We will also need the following property of natural density of sets of primes.
\begin{lem}%
\label{prop:density2}
Let $K$ be any number field, let $\calU_{\Q}$ be a set of rational primes of natural density 0 and
let $\calU_{K}$ be the set of all the primes of $K$ lying above the primes of $\calU_{\Q}$.  Then
the natural density of $\calU_{K}$ is also 0.
\end{lem}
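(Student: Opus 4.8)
The plan is to reduce the statement to a direct counting argument built on the Prime Ideal Theorem, exactly in the spirit of the estimates already used in Section~\ref{sec:density}. Recall that the natural density of a set $\calU_K$ of primes of $K$ is by definition
\[
\lim_{X\to\infty}\frac{\#\{\pp\in\calP_K:\ \pp\in\calU_K,\ \norm\pp\le X\}}{\#\{\pp\in\calP_K:\ \norm\pp\le X\}},
\]
and that, as in the proof of Lemma~\ref{prop:density}, Landau's Prime Ideal Theorem gives a constant $\bar C>0$ with $\#\{\pp\in\calP_K:\norm\pp\le X\}\ge \bar C\,X/\log X$ for all large $X$. Thus it suffices to show that the numerator attached to $\calU_K$ is $o(X/\log X)$.

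First I would split $\calU_K$ according to the relative degree of its primes over $\Q$; write $n=[K:\Q]$. A prime $\pp$ of $K$ of relative degree $f\ge 2$ over a rational prime $p$ satisfies $\norm\pp=p^f$, hence contributes to the numerator only if $p\le\sqrt X$; since at most $n$ primes of $K$ lie over any given rational prime, the number of such $\pp$ with $\norm\pp\le X$ is at most $n\cdot\#\{p\in\calP_{\Q}:p\le\sqrt X\}=O(\sqrt X/\log X)$ by the Prime Number Theorem, which is certainly $o(X/\log X)$. For a prime $\pp\in\calU_K$ of relative degree one, we have $\norm\pp=p$ for the rational prime $p$ below it, and by the definition of $\calU_K$ this $p$ lies in $\calU_{\Q}$; again there are at most $n$ such $\pp$ over each such $p$, so
\[
\#\{\pp\in\calU_K:\ f(\pp/p)=1,\ \norm\pp\le X\}\ \le\ n\cdot\#\{p\in\calU_{\Q}:\ p\le X\}.
\]
Since $\calU_{\Q}$ has natural density zero, $\#\{p\in\calU_{\Q}:p\le X\}=o\bigl(\#\{p\in\calP_{\Q}:p\le X\}\bigr)=o(X/\log X)$ by the Prime Number Theorem, so this term is $o(X/\log X)$ as well.

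Adding the two bounds gives $\#\{\pp\in\calU_K:\norm\pp\le X\}=o(X/\log X)$, and dividing by the lower bound $\bar C\,X/\log X$ for $\#\{\pp\in\calP_K:\norm\pp\le X\}$ shows the defining ratio for $\calU_K$ tends to $0$; hence $\calU_K$ has natural density zero. I do not anticipate any genuine difficulty here: the one point that needs a moment's care is the contribution of primes of relative degree $\ge 2$, but these are bounded crudely by $O(\sqrt X/\log X)$ and are negligible against the $X/\log X$ growth of the full prime-counting function for $K$.
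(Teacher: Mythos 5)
Your proof is correct and is essentially the paper's argument written out in full: the paper disposes of the lemma in one line by citing the prime ideal theorem bound $\#\{\pp\in\calP_K:\norm\pp\le X\}=O(X/\log X)$, and your counting of $\calU_K$ against that denominator is exactly the intended fleshing-out. (A minor simplification: since $\norm\pp=p^f\ge p$, every $\pp\in\calU_K$ with $\norm\pp\le X$ lies over some $p\in\calU_\Q$ with $p\le X$, so the single bound $n\cdot\#\{p\in\calU_\Q:p\le X\}=o(X/\log X)$ already covers all relative degrees and the case split is not needed.)
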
%
\begin{proof} This follows from the fact that $\#\{\pp \in \calP_K: \norm{\pp} \leq X\}
= O(X/\log X)$ for any number field.\end{proof}

We are now ready to prove Theorem 1 from the introduction.
First we need a couple of technical propositions  which will allow us to reduce the number of
quantifiers.
\begin{lem}%
\label{allappear}%
Let $M/K$ be a number field extension of degree $n$. Let $\calW_K \subset  \calP_K$, let $\calW_M
\subset \calP_M$ contain all the $M$-factors of primes in $\calW_K$ so that $O_{M,\calW_M}$ is the
integral closure of $O_{K,\calW_K}$ in $K$. Let $\alpha \in O_{M,\calW_M}$ generate $M$ over $K$.
Assume also that the discriminant of the power basis of $\alpha$ is $D$. Then for every $w \in
O_{M,\calW_M}$ we have that either  $$w=\sum_{i=0}^{n-1}a_i\alpha^i, a_i \in O_{K,\calW_K}$$ or
$$Dw=\sum_{i=0}^{n-1}b_i\alpha^i, b_i \in O_{K,\calW_K} \land \exists a_i\mbox{ such that }a_i \not
\equiv 0 \mod D$$ in $O_{K,\calW_K}$. Furthermore, both options cannot hold at the same time and every
element of $O_{K,\calW_K}$ occurs as a coefficient in the first sum.
\end{lem}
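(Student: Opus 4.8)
The plan is to exploit the standard fact that $O_{M,\calW_M}$ is a finitely generated $O_{K,\calW_K}$-module containing the free module $N := \bigoplus_{i=0}^{n-1} O_{K,\calW_K}\,\alpha^i$, and that the module index is controlled by the discriminant $D$ of the power basis. First I would recall why $O_{M,\calW_M}$ is the integral closure of $O_{K,\calW_K}$ in $M$: localizing the integral closure of $O_K$ in $M$ at the multiplicative set generated by $\calW_K$-primes gives precisely $O_{M,\calW_M}$ under the hypothesis that $\calW_M$ contains all $M$-factors of $\calW_K$-primes, and localization commutes with integral closure. Then the classical discriminant argument (trace pairing on $1,\alpha,\dots,\alpha^{n-1}$ has determinant $D$) shows $D\cdot O_{M,\calW_M} \subseteq N$, i.e.\ for every $w \in O_{M,\calW_M}$ we can write $Dw = \sum_{i=0}^{n-1} b_i \alpha^i$ with $b_i \in O_{K,\calW_K}$. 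This immediately gives the dichotomy: either all $b_i$ are divisible by $D$ in $O_{K,\calW_K}$, in which case $a_i := b_i/D$ expresses $w$ itself in the power basis with coefficients in $O_{K,\calW_K}$; or some $b_i$ is not divisible by $D$, which is the second alternative.

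Next I would check the two auxiliary assertions. That both options cannot hold simultaneously follows from uniqueness of the power-basis representation: $1,\alpha,\dots,\alpha^{n-1}$ is a $K$-basis of $M$, so if $w = \sum a_i\alpha^i$ with $a_i \in O_{K,\calW_K}$ then the only way to write $Dw = \sum b_i\alpha^i$ is $b_i = Da_i$, forcing every $b_i \equiv 0 \bmod D$; hence the second alternative is genuinely exclusive of the first. For the final claim --- that every element of $O_{K,\calW_K}$ occurs as a coefficient $a_i$ in the first sum for some $w$ --- I would simply take, for a given $c \in O_{K,\calW_K}$, the element $w = c \in O_{K,\calW_K} \subseteq O_{M,\calW_M}$, whose power-basis representation is $w = c\cdot 1 + 0\cdot\alpha + \dots$, so $c$ appears as the coefficient $a_0$. (If one wants $c$ to appear in a prescribed slot $i$, use $w = c\alpha^i$.)

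The one genuine point requiring care --- the ``main obstacle'' such as it is --- is justifying $D\cdot O_{M,\calW_M}\subseteq N$ in the semilocal/localized setting rather than over $O_K$ itself: one must be sure that no prime of $\calW_K$ divides $D$ in a way that would spoil the argument, but in fact the reverse is true --- inverting more primes only enlarges both sides compatibly, and the trace-pairing determinant computation is insensitive to which ring of scalars we work over as long as it contains $O_K[\alpha]$ and $\alpha$ is integral over it. Concretely: if $w\in O_{M,\calW_M}$, clear denominators to write $w = m/s$ with $m$ in the integral closure of $O_K$ in $M$ and $s$ a product of $\calW_K$-primes (an element of $O_{K,\calW_K}^\times$ after inversion); apply the classical fact $Dm \in \bigoplus O_K\alpha^i$; then divide by $s \in O_{K,\calW_K}$ to land in $N$. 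This is routine once set up correctly, so the whole lemma reduces to assembling these standard commutative-algebra facts in the right order.
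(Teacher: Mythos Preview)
Your proof is correct and follows essentially the same approach as the paper's own proof. The paper invokes the standard fact $Dw \in \bigoplus_i O_{K,\calW_K}\alpha^i$ by citing a reference (Lemma~B.4.12 of \cite{Sh34}), whereas you unpack it via the trace-pairing/discriminant argument and the localization step; the dichotomy, the mutual-exclusivity argument from uniqueness of the power-basis representation, and the surjectivity-of-coefficients claim are all handled the same way.
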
%
 \begin{proof}%
 By a well-known number-theoretic fact (see for example Lemma B.4.12 of \cite{Sh34}), for any  $w
\in O_{M,\calW_M}$    we have that   $Dw=\sum_{i=0}^{n-1}b_i\alpha^i,
b_i \in O_{K,\calW_K}$.  At the same time, if
\begin{equation}%
\label{coord}
w=\sum_{i=0}^{n-1}a_i\alpha^i, a_i \in O_{K,\calW_K}
\end{equation}%
 and $D$ is not a unit in $O_{K,\calW_K}$, then
the second option cannot hold.  (If $D$ is a unit, then the second option cannot hold in any case.)
Thus,  for each $w$ one of the options holds and both cannot hold at the same time.  Next it is
clear that for any choice $(a_0,\ldots,a_{n-1}) \in O_{K,\calW_K}^n$ we have that
$w=\sum_{i=0}^{n-1}a_i\alpha^i \in O_{M,\calW_M}$ and for each $w$ the choice of the
$n$-tuple $(a_0,\ldots,a_{n-1})$ satisfying (\ref{coord}) is unique.
 \end{proof}%
   \begin{rem}%
Note that the condition $b_i \not \equiv 0 \mod D$ is actually Diophantine, since it is
equivalent to a sentence  $\bigvee_{j=1}^l(b_i \equiv A_j \mod D)$, where the $\{A_j\}$ contains
a representative of every non-zero equivalence class modulo the
principal ideal generated by  $D$ in $O_{K,\calW_K}$.
   \end{rem}%
\begin{prop}  %
\label{reduce}
Let $M, K, \calW_K, \calW_M, \alpha$ be as in Lemma \ref{allappear}.  Assume further that $O_{K,\calW_K}$ is
existentially definable over $O_{M,\calW_M}$.  Let $Z \subset O_{K,\calW_K}$ be definable over
$O_{K,\calW_K}$ by a formula of the form $ \forall \bar X  \exists \bar Y  P( T, \bar X,
\bar Y)$, or  $ \exists \bar U  \forall X  \exists \bar Y  P( T, \bar U, \bar X, \bar
Y)$ where $\forall \bar X$ and $\exists \bar U$, $\exists \bar Y$ represent a sequence of universal
or existential quantifiers respectively.  Assume that the  sequence of universal quantifiers in the
formula is of length less or equal to $n=[K:\Q]$.  Then $Z$ is definable over $O_{M,\calW_M}$ with a
formula using just one universal quantifier.
\end{prop}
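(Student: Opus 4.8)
\emph{Proof sketch.} The plan is to collapse the whole block $\forall\bar X$ of at most $n=[M:K]$ universal quantifiers ranging over $O_{K,\calW_K}$ into a single universal quantifier ranging over $O_{M,\calW_M}$, by encoding an $n$-tuple of elements of $O_{K,\calW_K}$ as the power-basis coordinate vector of one element of $O_{M,\calW_M}$, as in Lemma \ref{allappear}. First I would pad $\bar X$ with dummy variables so that, without changing the set defined, the block consists of exactly $n$ universal quantifiers $\forall X_0,\dots,\forall X_{n-1}$. Since $O_{K,\calW_K}$ is existentially definable in $O_{M,\calW_M}$, every existential quantifier over $O_{K,\calW_K}$ appearing in the formula ($\bar Y$, and $\bar U$ in the second case) becomes an existential quantifier over $O_{M,\calW_M}$ subject to an existential side condition; and the atomic conditions of $P$ among elements of $O_{K,\calW_K}$ are faithfully reflected in $O_{M,\calW_M}$ (for divisibilities one uses $O_{K,\calW_K}=O_{M,\calW_M}\cap K$). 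So only the block $\forall\bar X$ requires work.

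Given $w\in O_{M,\calW_M}$, by Lemma \ref{allappear} exactly one of two things happens: either $w=\sum_{i=0}^{n-1}a_i\alpha^i$ for a (unique) tuple $\bar a\in O_{K,\calW_K}^n$ --- call $w$ \emph{admissible} --- or, in the unique representation $Dw=\sum_{i=0}^{n-1}b_i\alpha^i$ with $b_i\in O_{K,\calW_K}$, some $b_i$ fails to be divisible by $D$ in $O_{K,\calW_K}$. I would use the Remark after Lemma \ref{allappear}, that ``$b_i\not\equiv 0\bmod D$'' is Diophantine, to conclude that ``$w$ is \emph{not} admissible'' is an existential condition on $w$ over $O_{M,\calW_M}$ (existentially quantify the $b_i\in O_{K,\calW_K}$, impose $Dw=\sum b_i\alpha^i$, and the Diophantine non-divisibility); similarly ``$w$ is admissible with coordinate tuple $\bar X$'' is existential. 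The content of Lemma \ref{allappear} that I need is that $w\mapsto\bar a$ is a bijection from the admissible elements onto $O_{K,\calW_K}^n$, and that the two cases exclude each other.

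Then I would replace $\forall\bar X\,\exists\bar Y\,P(T,\bar X,\bar Y)$ by the sentence
\[
\forall w\in O_{M,\calW_M}:\ \big(w\text{ is not admissible}\big)\ \vee\ \big(\exists\bar X\in O_{K,\calW_K}^n:\ w=\textstyle\sum_i X_i\alpha^i\ \wedge\ \exists\bar Y\,P(T,\bar X,\bar Y)\big),
\]
which, after the existential sub-formulas are unwound over $O_{M,\calW_M}$, uses only one universal quantifier. It defines $Z$ again: for non-admissible $w$ the first disjunct holds trivially; for admissible $w$ the first disjunct fails, and the statement reduces to $\exists\bar Y\,P(T,\bar a,\bar Y)$ for the coordinate tuple $\bar a$ of $w$; and since $\bar a$ runs through all of $O_{K,\calW_K}^n$ as $w$ runs through the admissible elements, this is equivalent to $\forall\bar X\in O_{K,\calW_K}^n\,\exists\bar Y\,P$. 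For the second case of the statement I would leave the prefix $\exists\bar U$ in place and apply the same substitution inside it, obtaining an $\exists\forall\exists$ formula with a single universal quantifier.

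The hard part --- and the only reason one needs Lemma \ref{allappear} rather than the bare ``each $w$ has coordinates'' fact --- is handling the $w$ that do not encode a tuple from $O_{K,\calW_K}^n$: if one simply wrote $\forall w\,((w\text{ admissible})\to\cdots)$, the hypothesis ``$w$ admissible'' is existential and its negation is universal, so a second universal quantifier would creep back in. The discriminant trick makes ``$w$ is not admissible'' itself Diophantine, and that is exactly what lets it sit as an innocuous disjunct under the single $\forall w$.
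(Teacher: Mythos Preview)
Your proposal is correct and follows essentially the same route as the paper's proof: encode the (padded) $n$-tuple $\bar X$ as the power-basis coordinates of a single $w\in O_{M,\calW_M}$, and under the single $\forall w$ form the disjunction ``$w$ is not admissible'' $\lor$ ``$w=\sum X_i\alpha^i$ with $X_i\in O_{K,\calW_K}$ and $\exists\bar Y\,P$'', using the discriminant trick from Lemma~\ref{allappear} and its Remark to make the non-admissible branch existential. Your explicit discussion of why the naive implication would reintroduce a universal quantifier is a nice addition.
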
%
\begin{proof}%
The idea is to encode the variables over which there is universal quantification into a single
universal quantifier over the larger ring, by using them as coefficients in the power basis of
$\alpha$. It is enough to consider the ``translation'' of
\begin{equation}
\label{eq:form}
\exists \bar U \in O_{K,\calW_K}^r  \forall \bar  X \in O_{K,\calW_K}^{\ell}  \exists \bar Y
\in O_{K,\calW_K}^m P( T, \bar U, \bar X, \bar Y)
\end{equation}%
into variables ranging over $O_{M,\calW_M}$. Let $\Gamma(V,\bar Z)$ be a Diophantine definition of $O_{K,\calW_K}$ over
$O_{M,\calW_M}$. Let $\bar U =(u_1,\ldots,u_r), \bar X=(x_0,\ldots,x_{\ell}), \bar R
=(x_{\ell+1},\ldots,x_{n-1}), \ell \leq n-1, \bar Y =(y_1,\ldots,y_m)$. Let
\[%
{\tt F}_1:=  (\Gamma(T,\bar Z_0)=0)
\]%
 \[%
{\tt F}_2:=     (\bigwedge_{i=1}^r\Gamma(u_i, \bar Z_i)=0)
\]    %
   \[%
{\tt F}_3:=            (\bigwedge_{i=0}^{n-1}\Gamma(x_i,\bar Z_{r+i+1})=0)
\]%
     \[%
{\tt F}_4:=       (\bigwedge_{i=1}^m\Gamma(y_i, \bar Z_{r+n+i})=0)
\]%

\begin{equation}
{\tt F}:= \left({\tt F}_1 \land {\tt F}_2 \land {\tt F}_3 \land {\tt F}_4 \right )%
 \end{equation}%
 Let
 \begin{equation}%
  {\tt H} :=  \left  (\bigvee_{i=0}^{n-1}(x_i \not \equiv 0 \mod D ) \right )
 \end{equation}%
  Then (\ref{eq:form}) becomes
\begin{equation}%
\label{translation}%
\exists \bar U \exists \bar Z_0 \exists \bar Z_1,\ldots, \bar Z_r\forall w \exists \bar X \exists
\bar R \exists \bar Z_{r+1},\ldots, \exists \bar Z_{r+n}, \exists \bar Y \exists \bar Z_{r+n+1}
\ldots \bar Z_{r+n+m}
\end{equation}%
\[%
\left ({\tt F}  \land (w =\sum_{i=0}^{n-1}x_i\alpha^i)   \land P(T,\bar U,\bar X,\bar Y)=0 \right )
\]%
\[%
 \lor (  (Dw =\sum_{i=0}^{n-1}x_i\alpha^i)\land  \left (\bigwedge_{i=0}^{n-1}\Gamma(x_i,\bar Z_{r+i+1})=0
\right ) \land {\tt H}  ))
\]%
\end{proof}%
 \begin{thm}%
Let $K$ be a number field of one of the following types:%
\be%
\item $K \not = \Q$ is totally real;%
\item $K$ is an extension of degree two of a totally real number field;%
\item There exists an elliptic curve defined over $\Q$ and of positive rank over $\Q$ such that this
curve preserves its rank over $K$.
\ee%
Let $L$ be a totally real cyclic extension of $\Q$ of degree $p$ such that $p$ does not divide $[K_G: \Q]$, where
$K_G$ is the Galois closure of $K$ over $\Q$.  Let $\calW_K$ be a set of primes of $K$ such that all but finitely many
primes in the set do not split in the extension $KL/K$.  Let $E$ be an elliptic curve defined over $\Q$ and of
rank one over $\Q$. Let $P \in E(\Q)$ be a point of infinite order.  Let $\calV_{\Q}(P)$ be defined as in Theorem
\ref{blah} and let $\calV_K(P)$ be a set of primes of $K$ containing at least one factor for every prime
of $\calV_{\Q}(P)$.  Let $\calU_K = \calW_K \setminus \calV_K(P)$.  Then for some set of $K$-primes $\tilde \calU$
containing $\calU$ and such that $\tilde \calU \setminus \calU$ is a finite set  we have that
\be%
\item $\Z$ is definable  in $O_{K,\tilde \calU_K}$ using one universal quantifier, %
\item for any $\varepsilon >0$, it can be arranged that the natural density of $\tilde \calU_K$ is greater than $1-
\varepsilon$.
\ee%
\end{thm}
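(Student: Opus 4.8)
The plan is to build the required definition of $\Z$ over $O_{K,\tilde\calU_K}$ in two moves. First, run the machinery of Sections~1--4 with base field $\Q$ and the rank-one curve $E/\Q$ to get a definition of $\Z$ with \emph{two} universal quantifiers over a large subring of $\Q$. Second, transport that definition up to $O_{K,\tilde\calU_K}$, collapsing its two universal variables into one by using them as the coefficients of a power basis of $K/\Q$, exactly as in Proposition \ref{reduce}. Throughout, since integrality at a finite set of primes is existentially definable (cf.\ the proof of Theorem \ref{blah} and Chapter~4 of \cite{Sh34}), I will freely alter all the sets of primes involved by finite sets, and I may assume $K\ne\Q$, so that $[K:\Q]\ge 2$.

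First I would fix the auxiliary data. Choose a prime $p>\max(1/\varepsilon,\,[K_G:\Q])$, so that $p\nmid[K_G:\Q]$ and $1-1/p>1-\varepsilon$, and let $L$ be a totally real cyclic extension of $\Q$ of degree $p$ (such $L$ sits inside $\Q(\zeta_q)$ for a prime $q\equiv 1\pmod{2p}$, which exists by Dirichlet). Since $p\nmid[K_G:\Q]$ we have $L\cap K=\Q$, so $KL/K$ is cyclic of degree $p$, and for a prime $\pp$ of $K$ over a rational prime $\ell$ unramified in $L/\Q$ the residue degree $f(\pp/\ell)$ divides $[K_G:\Q]$ and hence is prime to $p$; therefore $\mathrm{Frob}_{KL/K}(\pp)=\mathrm{Frob}_{L/\Q}(\ell)^{f(\pp/\ell)}$ is trivial exactly when $\ell$ splits completely in $L/\Q$. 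Consequently, up to finitely many primes, $\calW_K$ equals the set of all $K$-primes lying above rational primes that do not split completely in $L/\Q$; by Chebotarev applied to $KL/K$ this set has density $1-1/p$, and so does $\calU_K=\calW_K\setminus\calV_K(P)$, since $\calV_{\Q}(P)$ has density zero (the density-zero proposition of Section \ref{sec:density}) and hence so does $\calV_K(P)$ by Lemma \ref{prop:density2}. This already gives part~(2) once $\tilde\calU$ is seen to differ from $\calU$ by a finite set.

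Next I would set up the $\Q$-side. Put $\calW_\Q=\{\ell\in\calP_{\Q}:\ell\text{ does not split completely in }L/\Q\}\cup\calS_{\bad}(P,W,\Q)\setminus\calV_{\Q}(P)$. This set has density $1-1/p$ and satisfies the hypotheses of Theorem \ref{blah} with base field $\Q$, curve $E$, point $P$ and extension $L/\Q$: $\calP_{\Q}\setminus\calW_\Q\supseteq\calV_{\Q}(P)$, $\calS_{\bad}\subseteq\calW_\Q$, and all but finitely many primes of $\calW_\Q$ are inert in $L/\Q$, hence have no relative degree-one factor there. Hence $\Z$ is definable over $O_{\Q,\calW_\Q}$ by a formula of the shape $\exists\bar U\,\forall X_1\,\forall X_2\,\exists\bar Y\colon P_0=0$, whose universal block has length $2\le[K:\Q]$. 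Choosing $\calV_K(P)$ to consist of \emph{all} the $K$-factors of the primes of $\calV_{\Q}(P)$, one checks that the integral closure of $O_{\Q,\calW_\Q}$ in $K$ is, up to a finite set of primes, precisely $O_{K,\calU_K}$; absorbing that finite set into the choice of $\tilde\calU$ puts $\calW_\Q$, $\tilde\calU_K$, $\tilde\calU$ and a power-basis generator $\alpha\in O_K$ in the configuration required by Lemma \ref{allappear} and Proposition \ref{reduce}.

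Finally I would invoke Proposition \ref{reduce} with the pair $(K,M)$ there taken to be $(\Q,K)$. Provided its remaining hypothesis holds --- that $O_{\Q,\calW_\Q}$ is existentially definable over $O_{K,\tilde\calU_K}$ --- it outputs a definition of $\Z$ as a subset of $O_{K,\tilde\calU_K}$ using a single universal quantifier, which finishes the proof. That hypothesis is the only substantive point. It is to be drawn from Proposition \ref{thm:down} (which rests on the vertical-definability techniques of \cite{Sh36}, \cite{Sh33} and Section~7.8 of \cite{Sh34}): applied with the present $K$, $L$, and with $\calW_K$ taken to be $\calU_K$ itself, that proposition produces a finite enlargement $\tilde\calW_K$ of $\calU_K$ for which $O_{K,\tilde\calW_K}\cap\Q$ has a Diophantine definition over $\Q$, and $O_{K,\tilde\calW_K}\cap\Q$ then differs from $O_{\Q,\calW_\Q}$ by only finitely many primes. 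One still has to convert a Diophantine definition of this subring \emph{over the field $\Q$} into an existential definition of it \emph{as a subset of the big ring} $O_{K,\tilde\calU_K}$ --- using that $\Q$ is the fraction field of $O_{\Q,\calW_\Q}$, that integrality at finitely many primes is existentially expressible, and (on taking $\tilde\calU:=\tilde\calW_K$, consistent with the integral-closure requirement above) that $O_{K,\tilde\calU_K}$ is the integral closure of $O_{\Q,\calW_\Q}$ in $K$. The bookkeeping of the various finite sets of primes ($\calS_{\bad}$ over $\Q$ versus over $K$, the primes ramifying in $L/\Q$, the enlargement $\tilde\calW_K\supseteq\calU_K$) I expect to be routine; the genuine obstacle is this last transfer, making the $\Q$-definition of Proposition \ref{thm:down} serve as the existential input that Proposition \ref{reduce} demands.
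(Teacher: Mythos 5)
Your proposal follows essentially the same route as the paper: Proposition \ref{thm:down} supplies the vertical existential definability after a finite enlargement $\tilde\calU$ of $\calU$, Theorem \ref{blah} applied over $\Q$ to the resulting subring of $\Q$ gives the two-universal-quantifier definition of $\Z$, Proposition \ref{reduce} (with the pair $(\Q,K)$) collapses the two universal variables into one via the power basis, and part (2) follows from Chebotarev together with the density-zero statement for $\calV_K(P)$. The one "genuine obstacle" you flag at the end is not treated as an obstacle in the paper: Proposition \ref{thm:down} is invoked there as giving a Diophantine definition of $O_{K,\tilde\calU_K}\cap\Q$ \emph{over the big ring} $O_{K,\tilde\calU_K}$ (the phrase "in $\Q$" in its statement being a slip for "in $O_{K,\tilde\calW_K}$", which is what the cited vertical-definability results actually provide), and the paper also avoids your step of matching an explicitly constructed $\calW_\Q$ with $O_{K,\tilde\calU_K}\cap\Q$ by simply defining $\calT_\Q$ so that $O_{\Q,\calT_\Q}=O_{K,\tilde\calU_K}\cap\Q$ and checking the hypotheses of Theorem \ref{blah} for that set directly.
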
%
\begin{proof}%
By Theorem \ref{thm:down}, for some set of primes $\tilde \calU$ as described above we have that
$O_{K,\tilde \calU_K} \cap \Q$ is existentially definable in $O_{K, \tilde \calU_K}$. Let
$O_{\Q,\calT_{\Q}}=O_{K,\tilde \calU_K} \cap \Q$. Then given our assumption on $L$, we have that
all but finitely primes of $\calT_{\Q}$ do not split in the extension $L/\Q$. Further, by
construction, $\calV_{\Q}(P) \cap \calT_{\Q}$ is at most finite set. Thus, $\Z$ is definable using
two universal quantifiers in $O_{\Q,\calT_{\Q}}$ and therefore by Proposition \ref{reduce} we can
define $\Z$ in $O_{K,\tilde \calU_K}$ using just one universal quantifier.

Next let $\varepsilon >0$ be given. Then choose $L$ to be of prime degree $p >
\frac{1}{\varepsilon}$ and let $\calW_K$ be the set of {\it all} $K$-primes not splitting
completely in the extension $KL/K$. Then $\calW_K$ will be of natural density $\frac{p-1}{p}$. Next
observe that by Proposition \ref{prop:density2} we have that the density of $\calV_K(P)$ is zero
and therefore removing primes of $\calV_K(P)$ from $\calW_K$ to form $\calU_K$ will not change the
density.

\end{proof}%

\section{Defining Subfields over Number Fields Using One Universal Quantifier}
In this section we will produce another  vertical definability result exploiting properties of
elliptic curves and requiring just one universal quantifier.

\begin{prop}%
Let $M, K, \qq_1, \ldots, \qq_k, \mathfrak Q$ be as in Proposition \ref{extensions}. Let $E$ be an
elliptic curve defined over $K$ such that $\rank\ E(K) >0$.   Then
$K$ is definable over $M$ using just one universal quantifier.
\end{prop}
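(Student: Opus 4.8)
The plan is to combine the ``vertical'' definability machinery of Proposition \ref{extensions} with the existential divisibility model coming from the elliptic curve, exactly as in the proof of Theorem \ref{rankonedown}, but now using the curve $E/K$ of positive rank rather than a curve of rank exactly $1$. First I would fix a rational prime $q$ and let $\qq_1,\ldots,\qq_k$ be its factors in $K$; since $M/K$ is a proper extension and $\mathfrak Q$ is chosen as in Proposition \ref{extensions}, it suffices to produce, for a given $u \in M$, a formula asserting the existence of a sequence $\{k_i,y_i\}$ with $y_i \in K$, $k_{i+1}>k_i$, $\ord_{\qq_j}y_i\ge 0$ and $\ord_{\qq_j}(u-y_i)\ge k_i$ for all $i,j$; Proposition \ref{extensions} then forces $u\in K$, and conversely every $u\in K$ trivially admits such a sequence. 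The role of the elliptic curve is to manufacture the rational approximations $y_i = k^2$ (with $k$ ranging over the integers) with controlled $\qq_j$-adic denominators, via the multiplication-by-$n$ maps on $E$.

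Concretely, after replacing $E(K)$ by a cyclic quotient and passing to a suitable multiple $P=tQ$ as in Notation \ref{S:notation section} — which only uses $\rank E(K)>0$, not rank exactly one — I would use the $x$-coordinate sequence $x_{jm_0m_1}(P)$ and the divisibility relation ``$j\mid \ell$'', which is existentially definable over $O_{K,\calW_K}$ by Lemma \ref{le:defdiv}, together with Lemma \ref{le:equiv}: for $k=\ell/j$ one has $\dd(x_{jm})\mid \nn(x_{jm}/x_{\ell m}-k^2)^2$ in the divisor semigroup of $K$. Writing $y_i = k^2 \in \Q \subseteq K$ and using Lemma \ref{multiple} (or Corollary \ref{cor:div}) to guarantee that $\dd(x_{jm})$ can be made divisible by arbitrarily high powers of the $\qq_i$, I get $\ord_{\qq_i}(u - k^2)$ as large as we please whenever $u$ satisfies the universal condition, which is precisely the hypothesis of Proposition \ref{extensions}. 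The universal quantifier appears exactly once, over the element $v\in K$ that is used to force $j$ (equivalently $\dd(x_{jm})$) to be divisible by a large power of $q$; everything else in the formula — the existence of $x_{jm},x_{\ell m}$, the relation $j\mid \ell$, and the divisibility conditions \eqref{eq:need}-type inequalities — is existential by Theorem \ref{divmodel} and Lemma \ref{le:defdiv}.

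I would then verify the converse direction: given $u\in K$, for each $v$ choose (using Lemma \ref{multiple} and Lemma \ref{le:orderchange}) a $j$ with $\ord_{\qq_i}x_{jm}<\ord_{\qq_i}v$ for all $i$; since $u\in K$ we may even take $y_i=u$ directly, or approximate $u$ by squares of integers using the classical fact that every positive integer is a sum of squares and every rational is a ratio of integers, exactly as at the end of the proof of Theorem \ref{rankonedown}. The main obstacle I anticipate is bookkeeping rather than conceptual: one must check that Proposition \ref{extensions} applies with $\alpha$ a generator of $M/K$ integral at $\mathfrak Q$ and that the inequality \eqref{eq:need} translated to the present setting genuinely yields $\ord_{\qq_j}(u-y_i)\to\infty$, and one must confirm that dropping the rank-one hypothesis to mere positive rank costs nothing — which it does not, since the only place rank one entered earlier was in identifying $Q$ as a generator of the free part, and here any point of infinite order suffices. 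So the proof is essentially a transcription of the argument for Theorem \ref{rankonedown} with $\Q$ replaced by $K$ and $K$ replaced by $M$.
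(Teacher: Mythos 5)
Your overall architecture (Proposition \ref{extensions} plus elliptic-curve approximants, mirroring Theorem \ref{rankonedown}) is the right one, but there are two genuine gaps, and they sit exactly where the paper's proof diverges from a literal transcription of Theorem \ref{rankonedown}. First, your approximants $y_i=k^2$ are \emph{rational}, so Proposition \ref{extensions} applied to them yields $u\in\Q$, not $u\in K$: what you have sketched is a one-quantifier definition of (a subset of) $\Q$ inside $M$. The paper instead sets $r=[E(M):E(K)]$ (this is where the finite-index hypothesis, which you never invoke, enters) and quantifies existentially over two points of $rE(M)\subseteq E(K)$; the ratio $a_1/a_2$ of their $x$-coordinates is then automatically an element of $K$, with no condition relating the two points needed, and Proposition \ref{extensions} gives $u\in K$ directly. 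Lemma \ref{le:equiv} is used only in the converse direction, to witness the formula when $u$ is a square integer; the passage to all of $K$ then requires not only sums of four squares and ratios of integers but also a step you omit, namely writing every element of $K$ as a $\Q$-linear combination of a fixed basis (and merging the resulting universal quantifiers into one). Your remark that ``since $u\in K$ we may take $y_i=u$ directly'' cannot be right: if arbitrary elements of $M$ were admissible as approximants, every $u\in M$ would satisfy the formula.

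Second, your formula leans on Lemma \ref{le:defdiv} to express $j\mid\ell$ existentially, and on the set $\{(a,b):\exists n,\ x_{nm_0}=a/b\}$ being diophantine. Both are established only over rings $O_{K,\calW_K}$ with $\calS_{\bad}\subseteq\calW_K$ and $\calV_K\cap\calW_K=\emptyset$: the proof reads divisibility off the non-invertible parts of the denominators $\dd_n$, and this mechanism collapses over the field $M$, where every prime is inverted. The present proposition is a statement about the field $M$, so that machinery is unavailable here. The paper's formula avoids it entirely: membership in $rE(M)$ is diophantine via the multiplication-by-$r$ map, and no divisibility relation between the indices of the two chosen points is imposed.
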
%
\begin{proof}%
Set $r:=[E(M):E(K)]$.
Fix an affine Weierstrass equation $W$ for $E$.  Let $u \in M, \ord_{\qq_i}u >0$ for all
$i=1,\ldots,n$  and consider the following formula:
\[%
\forall z \in M \exists (a_1, b_1), (a_2, b_2)  \in rE(M):
\]%
\[%
\bigwedge_{i=1}^k \ord_{\qq_i}a_1 < \ord_{\qq_i}z\land \ord_{\qq_i}a_2 <z  \land 2\ord_{\qq_i}(u
-\frac{a_1}{a_2}) \geq -\ord_{\qq_i}a_2.
\]%
Here, as above, we identify non-zero points of $E(M)$ with pairs of solutions to the chosen
Weierstrass equation and $rE(M)$ is the set of $r$-multiples of non-zero points of $E(M)$.  Suppose
the formula is true for some value of $u \in M$.  Then by assumption $\frac{x_1}{x_2} \in K$ and  by
Proposition \ref{extensions} we have that $u \in K$.

Now assume that $u \in \Z$,  $u \not =0$ and $u$ is a square.    Let $(x_1,y_1) \in E(M)$ be the
affine coordinates with respect to $W$ of a point $P \in E(M)$  of infinite order.   Then
by Lemma \ref{le:equiv} there exists a positive integer $m_1$ such that for any positive integers
$l, k $,
\[%
\dd(x_{lrm_1}) \Big{|} \nn\left (\frac{x_{lrm_1}}{x_{rklm_1}}-k^2\right)^2
\]%
in the integral divisor semigroup of $M$.  Further, by Corollary \ref{cor:div} and    Lemma
\ref{le:orderchange} we have that for any positive $N$ for some sufficiently large $m$ it is the case
that $\ord_{\qq_i}x_{rmm_1} < -N$ for all $i$.  Finally we note that any positive integer can be
written as a sum of four squares, and any element of $K$ can be expressed as a linear combination of
some basis elements with rational coefficients.
\end{proof}%

We can use the same method of proof over certain subrings of $M$. Then only change
we would have to make is to possibly represent coordinates of points on $E$ as ratios of elements
in the ring. Everything else remains the same since order at a prime is existentially definable in
any $O_{M,\calW_M}$. In this way we arrive at the following.

\begin{prop}
\label{prop:down}
Let $M/K$ be a number field extension.  Assume there exists an elliptic curve $E$
defined over $K$  such that $\rank\ E >0$.  Let $\calW_M$ be any set
of $M$ primes (including the set of all $M$-primes  and the empty set).  Then $O_{M,\calW_M} \cap
K$ is definable over  $O_{M,\calW_M}$ using just one universal quantifier.
\end{prop}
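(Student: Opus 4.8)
The plan is to transcribe the proof of the preceding proposition, working now inside the ring $O_{M,\calW_M}$ instead of the field $M$; the only genuine change, as announced just before the statement, is that the affine coordinates of a point of $E$ are written as ratios of elements of $O_{M,\calW_M}$ rather than as elements of $M$. First I would dispose of the extreme cases: if $M=K$ then $O_{M,\calW_M}\cap K$ is the whole ring and the statement is trivial, and if $\calW_M$ is the set of all $M$-primes then $O_{M,\calW_M}=M$ and the statement is the preceding proposition. Otherwise, I would fix a prime $\mathfrak Q$ of $K$ all of whose $M$-factors $\qq_1,\dots,\qq_k$ lie outside $\calW_M$ and outside $\calS_{\bad}$ for $E$ over $M$, together with a generator $\alpha$ of $M/K$ integral at $\mathfrak Q$; then every $u\in O_{M,\calW_M}$ is automatically integral at each $\qq_j$, so the integrality hypotheses of Proposition \ref{extensions} are available.

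With $r:=[E(M):E(K)]$ (so that $rE(M)\subseteq E(K)$) and $W$ the fixed Weierstrass model, the defining formula for $u\in O_{M,\calW_M}$ is the ring version of the one in the preceding proposition:
\[
\forall z\in O_{M,\calW_M}\ \exists\, R_1,R_2\in rE(M)
\]
such that, writing $x(R_s)=a_s/c_s$ with $a_s,c_s\in O_{M,\calW_M}$ and the relevant denominators nonzero, one has for every $i=1,\dots,k$ that $\ord_{\qq_i}x(R_1)<-\ord_{\qq_i}z$ and $\ord_{\qq_i}x(R_2)<-\ord_{\qq_i}z$ (the sign in front of $\ord_{\qq_i}z$ being reversed from the field version, since over the ring the orders at the uninverted primes $\qq_i$ are bounded below by $0$), and
\[
2\,\ord_{\qq_i}\!\Big(u-\frac{x(R_1)}{x(R_2)}\Big)\ \ge\ -\ord_{\qq_i}\!\big(x(R_1)\big),
\]
imitating the divisibility relation of Lemma \ref{le:equiv}. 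This is a one-universal-quantifier formula over $O_{M,\calW_M}$: the relation ``$(a_s,c_s)$ give, up to a common factor, the coordinates of a point of $rE(M)$'' is existential because $W$ and the $r$-multiplication map on $E$ become polynomial relations after clearing denominators; the nonvanishing conditions are existential over any $O_{M,\calW_M}$; and each $\ord_{\qq_i}$-comparison at the fixed finite set $\{\qq_1,\dots,\qq_k\}$ is existential over $O_{M,\calW_M}$ by the results of Chapter~4 of \cite{Sh34}, which is exactly where one uses that order at a prime is existentially definable in a large ring. So the whole block following $\exists R_1,R_2$ collapses into a single existential block.

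For the forward implication, $u$ lies in $O_{M,\calW_M}$ since that is the range of the outer quantifier, and for each $z$ the points $R_1,R_2$ lie in $rE(M)\subseteq E(K)$, so $y:=x(R_1)/x(R_2)\in K$; the inequalities on $x(R_1)$ make $-\ord_{\qq_i}x(R_1)$ exceed any prescribed bound once $z$ is taken with $\ord_{\qq_i}z$ large for all $i$ (e.g.\ a high power of the rational prime below $\mathfrak Q$), whence the last inequality gives $\ord_{\qq_i}(u-y)\to\infty$; since $u$ is integral at $\qq_i$ this forces $\ord_{\qq_i}y\ge 0$ as well, so Proposition \ref{extensions}, applied to the sequence $\{k_i,y_i\}$ thereby produced, yields $u\in K$, i.e.\ $u\in O_{M,\calW_M}\cap K$. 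For the converse, by exactly the reductions used in the preceding proposition (a positive integer is a sum of four squares, a rational number is a ratio of integers, an element of $K$ is a $\Q$-linear combination of a fixed integral basis) it suffices to realise the formula for $u=k^2$ with $k$ a positive integer: take $P\in E(M)$ of infinite order (available since $\rank E>0$), use Lemma \ref{le:equiv} over $M$ to obtain $m_1$ with $\dd(x_{lrm_1})\mid\nn\big((x_{lrm_1}/x_{rklm_1}-k^2)^2\big)$, and use Corollary \ref{cor:div} and Lemma \ref{le:orderchange} to pick, for the given $z$, an $l$ making $\ord_{\qq_i}x_{lrm_1}$ as negative as required for all $i$; the points $R_1,R_2\in rE(M)$ with $x$-coordinates $x_{lrm_1}$ and $x_{rklm_1}$, written as ratios of ring elements, then satisfy the formula.

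The step I expect to cost the most care — though it is not conceptually deep — is the bookkeeping forced by the passage from $M$ to $O_{M,\calW_M}$: representing coordinates as ratios of ring elements without spawning a second universal quantifier, dealing with the lower bound on orders at the factors of $\mathfrak Q$ outside $\calW_M$ (which is what dictates the sign change above), and, for a completely arbitrary $\calW_M$, securing a prime $\mathfrak Q$ whose $M$-factors all avoid $\calW_M$ and the finitely many bad primes — or, failing that, replacing Proposition \ref{extensions} by a variant tolerating bounded denominators at the remaining factors. Once these points are settled the argument runs in parallel with the proof of the preceding proposition.
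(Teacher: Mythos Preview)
Your proposal is correct and is exactly the argument the paper has in mind: its ``proof'' is literally the one-sentence remark preceding the proposition, namely that the field case goes through verbatim once one writes affine coordinates of points of $rE(M)$ as ratios of ring elements and uses that $\ord_{\qq_i}$-conditions are existentially definable in any $O_{M,\calW_M}$. Regarding your residual worry about securing a $\mathfrak Q$ with \emph{all} $M$-factors outside $\calW_M$: this is not needed, since integrality at any fixed finite set of primes is itself existential over $O_{M,\calW_M}$ (cf.\ Chapter~4 of \cite{Sh34}, invoked elsewhere in the paper), so one may take any $\mathfrak Q$ avoiding $\calS_{\bad}$, keep all its factors $\qq_1,\dots,\qq_k$ in the formula, and impose $\ord_{\qq_i}u\ge 0$ existentially before running the argument, then recover the full $O_{M,\calW_M}\cap K$ by the ratio and basis reductions you already list.
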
%

\begin{rem}%
In connection with the results above we should note that the first-order definability of any
subfield of a number field follows from the  work of Julia Robinson also.  (See
\cite{Robinson:59}.)  However, her definition uses several quantifiers since it proceeds by defining
the algebraic integers over the field first, and then defining $\Z$ over the ring of integers.

Of course for $\calW_M = \emptyset$, $\calW_M$ of finite size and many infinite sets $\calW_M$ we
actually have existential definability. However, we do not have a proof of existential definability
for $\calW_M = \calP_M$. See \cite{Sh33} for more details.
\end{rem}



\begin{small}
\bibliographystyle{plain}

\end{small}


\finalinfo

\end{document}